\documentclass[oneside,english]{amsart}

\usepackage[latin1]{inputenc} \pagestyle{plain} 

\usepackage{tikz-cd}
\usepackage{tikz}

\usepackage{amssymb} \usepackage[]{epsf,epsfig,amsmath,amssymb,amsfonts,latexsym}
\usepackage{comment}

\newcommand{\ZZ}{\mathbb{Z}}

\newcommand{\T}{\mathcal{T}}
\newcommand{\NN}{\mathbb{N}}
\newcommand{\RR}{\mathbb{R}}

\newcommand{\Aut}{\mathit{Aut}}
\newcommand{\End}{\mathit{End}}

\newcommand{\stab}{\mathit{stab}}
\newcommand{\act}[2]{{#1} \curvearrowright {#2}}

\newcommand{\invorb}{\mathcal{O}^{-}}

\newtheorem{thm}{Theorem}[section]
\newtheorem{lem}[thm]{Lemma}
\newtheorem{prop}[thm]{Proposition}
\newtheorem{cor}[thm]{Corollary}

\theoremstyle{definition}
\newtheorem{defn}[thm]{Definition}
\newtheorem{remark}[thm]{Remark}
\newtheorem{example}[thm]{Example}
\newtheorem{question}[thm]{Question}

\title{On pointwise periodicity in tilings, cellular automata and subshifts}
\author{Tom Meyerovitch and Ville Salo}

\begin{document}
\maketitle

\begin{abstract}
We study implications of expansiveness and pointwise periodicity for certain groups and semigroups of transformations. Among other things we prove
that every pointwise periodic finitely generated group of cellular automata is necessarily finite. We also prove that  a subshift over any finitely generated group that consists of finite orbits is finite, and related results for  tilings  of Euclidean space.
\end{abstract}

\section{Introduction}

Following Kaul \cite{MR0267551},  a discrete (topological) group $G$ of transformations of set $X$ is \textbf{pointwise periodic} if the stabilizer of every $x \in X$ is of finite index in $G$. Equivalently, all $G$-orbits are finite (compact).

\begin{question}\label{question:pointwise_periodic}
Is a pointwise  periodic transformation group  finite (compact)?
\end{question}

Without extra assumptions, it is completely trivial  that the answer is false.
Generalizing a result of Montgomery, Kaul showed that  a pointwise  periodic transformation group is always compact when the transformation group acts on a connected manifold  without  boundary \cite{MR0267551}. While Kaul's result can be generalized considerably (see \cite{MR0415585}),
some  topological assumptions about the  space are crucial. For instance, consider the following example, where the assumption ``$X$ is a manifold without boundary'' is violated.

\begin{example}[Locally connected compact tree with pointwise periodic action]
\begin{center}
\begin{figure}
\caption{\label{fig:totally_periodic}Locally connected compact tree with pointwise periodic action}
\includegraphics[scale=0.5]{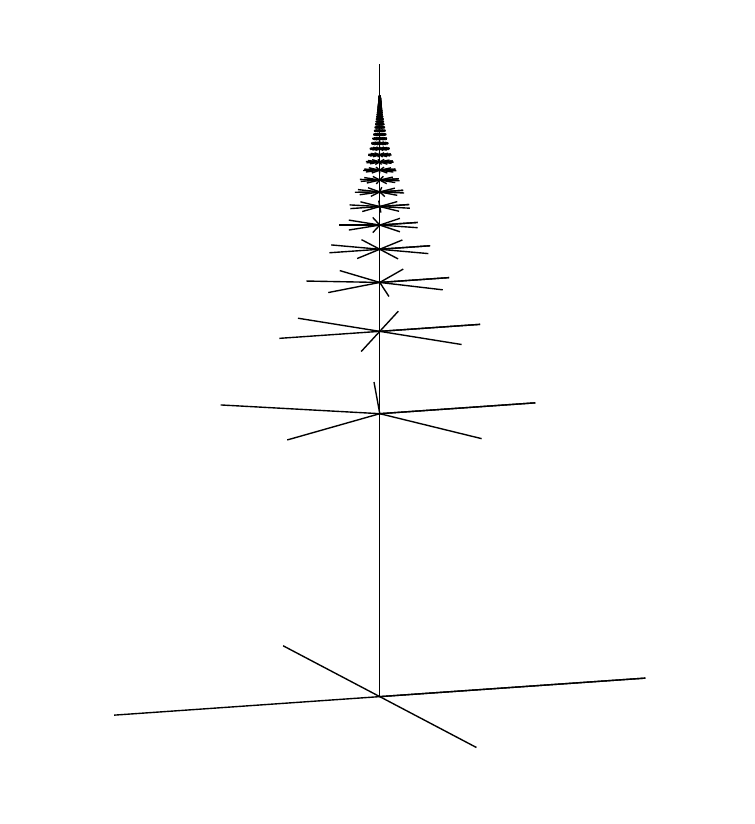}
\end{figure}
\end{center}
For every $n \in \mathbb{N}$ let
$$X_n = \left\{(s\omega_n^j,1-\frac{1}{n}) \in \mathbb{C} \times \mathbb{R}:~ s \in [0,\frac{1}{n}],~ 0 \le j <n] \right\},$$
where $\omega_n \in \mathbb{C}$ is a primitive $n$'th root of unity. Let
$$X = \left\{ (0,t) \in \mathbb{C} \times \mathbb{R}:~ t \in [0,1]\right\} \cup \bigcup_{n=1}^\infty X_n.$$
An illustration of $X$ (with some ``levels'' omitted) as a subset of $\mathbb{R}^3 \cong \mathbb{C}\times \mathbb{R}$    is given in Figure \ref{fig:totally_periodic}.
Consider the homeomorphism
$T:X \to X$ given by
$$T(s,t) = \begin{cases}
(s\omega_n,t) & \mbox{ if } t=1-\frac{1}{n}\\
(s,t) & \mbox{ otherwise}.
\end{cases}
$$

 $T:X \to X$ acts by ``rotating the branches of the tree''. $T$ is a homeomorphism,  pointwise periodic and faithful. Note that $X$ is topologically a compact  locally connected tree. In fact, every residually finite countable group admits a pointwise-periodic faithful action on this space.
\end{example}

In this work we study the above question and some variants in various settings, in particular in situations arising in symbolic dynamics, where the space $X$ is totally disconnected.
One consequence of this  is that Question \ref{question:pointwise_periodic} above has an affirmative answer when $G$ is a finitely generated group of cellular automata:

\begin{thm}\label{thm:pointwise_periodic_CA}
Let $M$ be a semigroup and let $X \subset A^M$ be an $M$-subshift. Every finitely generated subgroup $G$ of $\Aut(X,M)$ that  is pointwise  periodic is finite.
\end{thm}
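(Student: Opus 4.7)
The plan is to exploit the subshift structure of $X$ and the finite generation of $G$ to promote pointwise periodicity of $G$ first to a uniform bound on orbit sizes and then to a uniform bound on the memory of elements of $G$, from which finiteness follows because there are only finitely many cellular automata with any fixed finite memory set and finite alphabet.

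\emph{Setup and lower semicontinuity of the orbit-size function.} First I would fix a finite generating set $g_1,\dots,g_k$ of $G$, together with a common finite memory set $F\subset M$ for these cellular automata (via the Curtis--Hedlund--Lyndon-type characterization of subshift automorphisms); every word of length $\le n$ in $g_1^{\pm 1},\dots,g_k^{\pm 1}$ then has memory inside $F^n$. Define $n_G:X\to\NN\cup\{\infty\}$ by $n_G(x):=|Gx|$. By continuity of each $g_j\in G$ a finite list of distinct orbit points $g_j x$ remains distinct for $y$ near $x$, so $n_G$ is lower semicontinuous. Shift-commutation of $G$ with $M$ gives $n_G(mx)\le n_G(x)$, so each $X_\ell:=\{n_G\le\ell\}$ is a closed $M$-invariant subset of $X$, i.e., a subshift.

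\emph{From pointwise to uniform periodicity.} Pointwise periodicity means $X=\bigcup_\ell X_\ell$, and Baire category on the compact metric space $X$ produces an $\ell_0$ for which $X_{\ell_0}$ has nonempty interior and therefore contains a cylinder $[u]\cap X$. I would then propagate this to $X_{\ell_0}=X$ as follows: every minimal subsystem $Y\subset X$ meeting $[u]$ lies in $X_{\ell_0}$ by minimality, and a decomposition argument (reducing to behavior on minimal / recurrent parts of $X$ and using $M$-sub-invariance of $X_{\ell_0}$) extends this to all of $X$. The output is a uniform bound $N$ on $G$-orbit sizes.

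\emph{From a uniform orbit bound to finiteness.} Given $|Gx|\le N$ for all $x$, every $g\in G$ acts as a permutation on each orbit and so $g^{N!}=\mathrm{id}$, giving $G$ exponent dividing $N!$. Since bounded exponent alone is insufficient (Burnside-type examples), the next step combines expansiveness of $\act{M}{X}$, finite generation, and the bounded orbit sizes to bound the memory of all elements of $G$ uniformly: two elements of $G$ with the same action on all cylinders over some fixed $F^{n_0}$ must agree everywhere, so $G$ embeds into the finite group of cellular automata on $A$ with memory $F^{n_0}$.

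\emph{Main obstacle.} The two hardest pieces are (i) the propagation from a cylinder-level orbit bound to a global bound, which uses genuine subshift dynamics beyond Baire category and may require a careful decomposition of $X$ into minimal and non-minimal parts, and (ii) the final memory-bounding step, in which the combined use of expansiveness and cellular-automaton structure must rule out infinite finitely generated groups of bounded exponent.
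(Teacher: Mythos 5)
There is a genuine gap --- in fact two. First, your passage from ``a cylinder $[u]\cap X$ lies in $X_{\ell_0}$'' to ``$X=X_{\ell_0}$'' is not an argument. Baire category only controls points whose orbit closure meets $\mathrm{int}(X_{\ell_0})$; a point of $X$ in which $u$ never occurs is untouched, and the minimal subsystems of $X$ need not cover (or even be dense in) $X$, so ``reducing to minimal/recurrent parts'' does not reach all of $X$. Restarting the Baire argument on the leftover closed set can produce a larger $\ell$ at each stage with no a priori bound, so no uniform $N$ comes out. Second, even granting a uniform orbit bound $N$, your final step is circular: you assert that all elements of $G$ have memory contained in one fixed $F^{n_0}$, but producing such an $n_0$ uniformly over the (a priori infinite) set $G$ \emph{is} the content of the theorem --- once you have it, faithfulness gives $|G|\le |A|^{|A|^{|F^{n_0}|}}$ immediately. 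You correctly observe that bounded exponent alone cannot close this (Burnside-type obstructions), but you do not supply the missing mechanism. (If you genuinely had the uniform orbit bound, one could instead embed $G$ in a product of copies of $S_N$ and invoke local finiteness of the variety generated by a finite group, but that is a heavy hammer, and step one does not deliver the bound anyway.)

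The paper supplies exactly the uniformity you are missing, by a route that uses neither Baire category nor orbit-size bounds. Proposition \ref{prop:equicontinuous_group_finite_orbit} shows that a pointwise periodic action of a finitely generated group on a compact totally disconnected space is \emph{equicontinuous}: given an $S$-horoball $H\subset G$ and points $x,y$ with $d(gx,gy)\le\epsilon$ for all $g\in H$, pointwise periodicity yields $N$ such that every $g(x),g(y)$ can be written as $\tilde g h(x),\tilde g h(y)$ with $\tilde g\in S^N$, and since $H$ contains a translate $S^N h$ of an arbitrarily large ball (Lemma \ref{lem:horoball_contains_ball}), $x$ and $y$ stay $\epsilon$-close along the entire $G$-orbit; thus every horoball $\epsilon$-codes $G$, and the compactness/openness argument of Lemma \ref{lem:coding_ball} converts this into a finite ball $S^r$ that $\epsilon$-codes $G$ --- precisely the uniform memory bound you need. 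Proposition \ref{prop:aut_equicont} then combines equicontinuity, the commuting expansive $M$-action, and faithfulness to conclude $G$ is finite, which is your intended last step carried out correctly. To salvage your outline, replace your two problematic steps by this horoball/coding argument.
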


Ballier, Durand and Jeandel showed that $\mathbb{Z}^2$-subshifts that consist of periodic points are finite \cite{MR2873724}. A consequence of our result is that the same holds when $\mathbb{Z}^2$ is replaced by any finitely generated group. This generalization also appears in the PhD thesis of Ballier \cite[Theorem~5.8]{Ba09}:

\begin{thm}(Ballier, \cite[Theorem~5.8]{Ba09})\label{thm:subshift_pointwise_periodic}
Let $G$ be a finitely generated group and let $X \subset A^G$ be a $G$-subshift. If every $x \in X$ has finite orbit, then $X$ is finite.
\end{thm}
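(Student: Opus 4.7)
The plan is to combine a Baire category argument with the shift action to prove that $X$ is discrete, and hence finite by compactness.

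For each finite-index normal subgroup $H \le G$ set $X_H := \{x \in X : h \cdot x = x \text{ for all } h \in H\}$. Normality of $H$ ensures $X_H$ is $G$-invariant, and $X_H$ is finite because an $H$-periodic element of $A^G$ is determined by its values on a set of coset representatives of $G/H$. Every point of $X$ has finite-index stabilizer (its orbit being finite) and lies in $X_H$ where $H$ is the normal core of its stabilizer, a finite-index normal subgroup of $G$. Since $G$ is finitely generated there are only countably many finite-index normal subgroups, so $X = \bigcup_H X_H$ is a countable union of closed sets. As $X$ is compact Hausdorff and hence a Baire space, the union $\bigcup_H \mathrm{int}(X_H)$ is open and dense in $X$; each $\mathrm{int}(X_H)$ is an open subset of the finite Hausdorff set $X_H$, so it consists of isolated points of $X$. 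The set $I(X)$ of isolated points of $X$ is therefore open and dense.

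It suffices to show $I(X) = X$, since a compact discrete space is finite. Suppose for contradiction that some $p \in X$ is not isolated. By density of $I(X)$ there is a sequence of distinct isolated points $x_k \in I(X) \setminus \{p\}$ with $x_k \to p$. The orbit sizes $[G : \stab(x_k)]$ must tend to infinity: if they were bounded by some $N$, then each $\stab(x_k)$ would contain the intersection $K_N$ of the (finitely many) subgroups of $G$ of index at most $N$, so $\{x_k\} \subseteq X_{K_N}$, a finite set; but a sequence in a finite Hausdorff space converges to $p$ only by being eventually equal to $p$, contradicting $x_k \ne p$. Hence $[G : \stab(x_k)] \to \infty$.

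The final step, which is the main obstacle, is to derive a contradiction from $x_k \to p$ with $x_k \ne p$ and $[G : \stab(x_k)] \to \infty$. The plan is to translate: since $x_k$ and $p$ agree on arbitrarily large balls around the identity element $e$ but $x_k \ne p$, one picks a disagreement position $g_k$ with $|g_k| \to \infty$. Setting $y_k := g_k^{-1} \cdot x_k$ and $q_k := g_k^{-1} \cdot p$, and passing to subsequences (using that $G \cdot p$ is finite), one obtains limits $y_k \to y \in X$ and $q_k \to q \in G \cdot p$ with $y(e) \ne q(e)$. By hypothesis $y$ has a finite-index stabilizer, so both $y$ and $q$ are periodic under the finite-index subgroup $\stab(y) \cap \stab(q)$. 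The key is to show that the agreement set of $y$ and $q$ inherited from the convergence is large enough to meet every right coset of $\stab(y) \cap \stab(q)$; common periodicity would then force $y = q$, contradicting $y(e) \ne q(e)$. In the case $G = \mathbb{Z}$, choosing $g_k$ to be the closest disagreement gives $y = q$ on the entire left half-line, and the coset-covering follows from the Fine--Wilf theorem on periods; for general finitely generated $G$, verifying this covering property is the technical core of the proof.
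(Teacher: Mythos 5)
Your argument has a genuine gap, and it sits exactly where you say it does: the final step for general finitely generated $G$ is not carried out, and it is precisely the technical heart of the paper's proof. After translating by the nearest disagreement positions $g_k$ and passing to limits, the agreement set of $y$ and $q$ contains the Fell limit of the translated balls $g_k^{-1}S^{r_k-1}$ (with $r_k=|g_k|_S\to\infty$ and $e$ excluded). What you need is that this limit set contains arbitrarily large balls, since a ball of sufficiently large radius meets every right coset of the finite-index subgroup $\stab(y)\cap\stab(q)$, and common periodicity then propagates agreement from one point of a coset to the whole coset, forcing $y=q$. That the limit of such translated balls contains arbitrarily large balls is exactly the paper's Lemma \ref{lem:horoball_contains_ball} about $S$-horoballs (modulo the left/right bookkeeping: your sets are balls for the left-invariant word metric, the paper's for the right-invariant one, but the geodesic-prefix argument is the same). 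You assert this only for $G=\mathbb{Z}$, where the horoball is a half-line and the claim is trivial; for general $G$ it requires the diagonal/subsequence extraction on geodesic spellings of the $g_k$ that the paper carries out, and without it your proof does not close.

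Two further remarks. First, the Baire-category preamble (the decomposition $X=\bigcup_H X_H$ over finite-index normal subgroups, density of isolated points, and the divergence of orbit sizes) is correct but not load-bearing: an infinite compact metric space automatically has a non-isolated point, which is all your step 3 uses, so the reduction could be stated in one line. Second, once the missing lemma is supplied, your scheme essentially reconstructs the paper's route through Proposition \ref{prop:epsilon_code_horoball} and the proof of Theorem \ref{thm:pointwise_periodic_fg_expansive}: the limit set you build is a non-expansive horoball, and the coset-covering step is the paper's use of Lemma \ref{lem:horoball_contains_ball} together with pointwise periodicity. So the approach is viable, but as written the key step is missing rather than merely technical.
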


 $\mathbb{Z}^2$-subshifts of finite type can be regarded as tilings of the plane by tiles that are centered at lattice points.
It is seems natural to  ask for an $\mathbb{R}^d$-analog of the  Ballier-Durand-Jeandel result from \cite{MR2873724}. We have the following result:

\begin{thm}\label{thm:pointwise_periodic_tilings}
Let $\T$ be a finite set of $\RR^2$ tiles that are homeomorphic to balls and  can only tile periodically. Then there are finitely many tilings of $\RR^2$ with $\T$-tiles, up to translation.
\end{thm}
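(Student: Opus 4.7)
My plan is to show that the covolumes $\mathrm{covol}(\Lambda_T)$, as $T$ ranges over the $\T$-tiling space $X$ of $\RR^2$, are uniformly bounded, and then to deduce finiteness of $X/\RR^2$ combinatorially. Here $X$ is compact metrizable in the standard tiling topology, carries a continuous $\RR^2$-translation action, and $\Lambda_T := \stab(T)$ is a full-rank lattice by hypothesis.

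A preliminary uniform lower bound on shortest vectors: each prototile contains a ball of some definite positive radius, so translating a tiling by a vector shorter than twice the smallest such radius forces a tile to overlap its own translate. Hence there is $\theta > 0$ with $|v| \geq \theta$ for every nonzero $v \in \Lambda_T$ and every $T \in X$.

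The heart of the proof is the uniform upper bound on $\mathrm{covol}(\Lambda_T)$. The sets $X_N := \{T : \mathrm{covol}(\Lambda_T) \leq N\}$ are closed in $X$: a limit $T_k \to T$ with $T_k \in X_N$ has, by Mahler compactness applied with the bounds $\theta$ and $N$, a subsequential lattice limit $\Lambda_\infty$ of $\Lambda_{T_k}$ with covolume $\leq N$, and continuity of the action gives $\Lambda_\infty \subseteq \Lambda_T$, so $\mathrm{covol}(\Lambda_T) \leq N$. Pointwise periodicity gives $X = \bigcup_N X_N$, but this closed ascending cover does not on its own force $X = X_{N_0}$ (the one-point compactification of $\NN$ is a counterexample to the purely topological version). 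To get the uniform bound I would invoke a local-to-global period lemma: if $S \in X$ is periodic with fundamental-domain diameter $D$ and $S|_{B_R(0)}$ is locally invariant under translation by $v$, and $R > D + |v|$, then $v \in \Lambda_S$ (proved by $\Lambda_S$-translating any point into a fundamental domain of diameter $D$, applying local invariance inside $B_R(0)$, and translating back). Applied to a potential contradiction sequence $T_n \to T^*$ with $V_n := \mathrm{covol}(\Lambda_{T_n}) \to \infty$, short periods $v^*$ of $T^*$ appear as local invariances in $T_n|_{B_R(0)}$; the goal is to conclude $v^* \in \Lambda_{T_n}$ and hence $\Lambda_{T_n} \supseteq \Lambda_{T^*}$, forcing $V_n \leq \mathrm{covol}(\Lambda_{T^*})$ and contradicting $V_n \to \infty$. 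The delicate part, which is the main obstacle, is that the radius $R$ of exact agreement of $T_n$ with $T^*$ is bounded by $\sim D_n = O(V_n/\theta)$, while the lemma needs $R > D_n + |v^*|$: a careful diagonal argument exploiting pointwise periodicity of the whole hull (to prevent $T_n$ or suitable recenterings from drifting to an aperiodic limit) is needed to reconcile these, and this is where the disk-tile assumption and the resulting finite local complexity should play a decisive role --- possibly by reducing the problem to Theorem~\ref{thm:subshift_pointwise_periodic} on a discretization.

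Assuming the uniform bound $\mathrm{covol}(\Lambda_T) \leq N_0$, together with $|v| \geq \theta$, the lattices $\Lambda_T$ lie in a Mahler-compact family, with fundamental-domain diameters bounded by some $D_0$. For $R > 2 D_0$, the patch $T|_{B_R(0)}$ contains a fundamental domain of $\Lambda_T$, determines $\Lambda_T$ via the short translations matching the patch with itself, and hence determines $T$ up to translation. Only finitely many such patches can occur modulo translation because $\T$ is finite and the region is bounded, so $X/\RR^2$ is finite.
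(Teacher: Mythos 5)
There are two genuine gaps, and you have flagged the first one yourself. The heart of your argument is the uniform bound $\mathrm{covol}(\Lambda_T)\le N_0$, and your proposed route to it (the local-to-global period lemma applied to a contradiction sequence $T_n\to T^*$) runs into exactly the circularity you describe: the radius of agreement you can guarantee between $T_n$ and $T^*$ is controlled by quantities you are trying to bound, so the lemma's hypothesis $R>D_n+|v^*|$ is never verified. Saying that ``a careful diagonal argument \ldots is needed'' is not a proof; as written, the central step of the argument is missing. Note also that the purely topological obstruction you mention is real: pointwise periodicity of a compact hull does not by itself yield a uniform period bound (the paper's Example 2.6 is an expansive pointwise periodic system that is not periodic), so some finiteness input beyond compactness is unavoidable here.

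The second gap is that you assume finite local complexity (``the resulting finite local complexity'') without justification, and your final counting step secretly depends on it: the claim that only finitely many patches $T|_{B_R(0)}$ occur up to translation ``because $\T$ is finite and the region is bounded'' is false in general, since relative positions of adjacent tiles can vary continuously (unit squares in offset rows already give uncountably many $2$-tile patterns). Establishing FLC under the hypothesis that the disk tiles only tile periodically is precisely the content of Kenyon's theorem, which is the substantive external input in the paper's proof: either $\T$ has FLC, or some tiling has a straight fault line, and sliding along a fault line produces tilings with trivial stabilizer, contradicting total periodicity. Once FLC is in hand, the paper does not bound covolumes at all; it encodes the tilings as an expansive action of a finitely generated semigroup $M_\T$ on the space of centered tilings, observes that compact stabilizers make all inverse orbits finite, and invokes Theorem \ref{thm:finite_inverse_orbits}. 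Your lattice-geometry framework could plausibly be completed into an alternative proof of the FLC case, but you would still need Kenyon's theorem (or an equivalent) to rule out infinite local complexity, and you would need to actually close the uniform covolume bound rather than defer it.
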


The above result is a consequence of a more abstract  result  about periodic tilings of $\mathbb{R}^d$ with ``finite local complexity'',  and a more specific result about finite planar tilings that we extract from Kenyon's paper \cite{MR1150605}.

All of the above results are applications of a somewhat more general framework that revolves around the notion of \textbf{expansiveness}, and in particular the theory of \textbf{expansive subdynamics} that was introduced and developed by Boyle and Lind \cite{MR1355295}, in the setting of $\mathbb{Z}^d$ actions. En route, we explore and develop results about expansive dynamics for countable groups, semigroups and  in particular for finitely generated groups.

In the setting of cellular automata there are a couple of natural ways one can try to extend Theorem \ref{thm:pointwise_periodic_CA} further: One can ask if the assumption that $G$ is finitely generated is essential, and it turns out that it is. One can also ask about non-invertible cellular automata. This leads us to study expansive actions of semigroups and semigroup actions commuting with expansive actions. It turns out that the direct translation of the statements in Theorems \ref{thm:pointwise_periodic_CA} and \ref{thm:subshift_pointwise_periodic} fail if the group $G$ is merely a semigroup. However, it turns out that Theorem~\ref{thm:pointwise_periodic_CA} is again true when $G$ is replaced by a semigroup and $M$ by a group:

\begin{thm}\label{thm:pointwise_periodic_CA_monoid}
Let $G$ be a group and let $X \subset A^G$  be a $G$-subshift. Every finitely generated sub-semigroup of $\End(X,G)$ that  is pointwise  periodic is finite.
\end{thm}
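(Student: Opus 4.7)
The natural approach is to reduce Theorem~\ref{thm:pointwise_periodic_CA_monoid} to its group version, Theorem~\ref{thm:pointwise_periodic_CA}, by showing that the semigroup $S$ is in fact contained in $\Aut(X,G)$. Once this reduction is available, the group generated by $S$ inside $\Aut(X,G)$ coincides set-theoretically with $S$ (inverses become positive powers of the generators), is finitely generated, and inherits pointwise periodicity from $S$, so Theorem~\ref{thm:pointwise_periodic_CA} forces it to be finite.

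The technical heart of the argument is then to show that each $f\in S$ is invertible on $X$. Pointwise periodicity of the one-generator subsemigroup $\langle f\rangle$ yields, for every $x\in X$, integers $m(x)\ge 0$ and $n(x)\ge 1$ with $f^{m(x)+n(x)}(x)=f^{m(x)}(x)$. I would first promote this to a uniform identity: the sets $A_{m,n}=\{x\in X: f^{m+n}(x)=f^m(x)\}$ are closed, $G$-invariant, and cover $X$; a Baire category argument, using that $X$ is a $G$-subshift and that $f$ commutes with the $G$-action, should force some $A_{M,N}$ to equal $X$, giving $f^M=f^{M+N}$ globally. This already makes $f$ bijective on the closed subshift $f^M(X)$, with $f^N$ acting as the identity there.

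The remaining delicate step is to upgrade $f^M=f^{M+N}$ to $f^N=\mathrm{id}$, i.e., to arrange $M=0$. Here the hypothesis on the full semigroup $S$, and not merely on $\langle f\rangle$, must be used: a nontrivial $f$-transient at some point would, in combination with the other generators of $S$ and $G$-equivariance, create $S$-orbits that repeatedly leave and re-enter the stable subshift $f^M(X)$, producing arbitrarily many distinct $S$-images of a single point and contradicting pointwise periodicity. Once $f^N=\mathrm{id}$, the inverse $f^{-1}=f^{N-1}$ already lies in $S$, so $S\subseteq\Aut(X,G)$ and Theorem~\ref{thm:pointwise_periodic_CA} completes the proof.

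The main obstacle is the final upgrade $M\ge 1\Rightarrow M=0$. A constant cellular automaton satisfies $f^M=f^{M+N}$ globally with $M=1$ and is not invertible, yet the semigroup it generates on its own is trivially finite; thus the dichotomy one must really establish is: either $f$ is genuinely invertible, or the presence of any non-invertible element forces the overall pointwise-periodic, finitely generated semigroup to be small. Making this dichotomy quantitative --- presumably via the expansive-subdynamics machinery developed earlier in the paper applied to the joint action of the commuting $G$- and $S$-actions on the totally disconnected compact space $X$ --- appears to be where the real work lies.
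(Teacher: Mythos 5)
Your strategy is to reduce to the group case by showing that every element of the semigroup is invertible, but this reduction cannot succeed: a pointwise periodic finitely generated sub-semigroup of $\End(X,G)$ may genuinely contain non-invertible elements. The constant cellular automaton $c_y(x)=y$ for a fixed uniform configuration $y$ (or any idempotent endomorphism with proper image) commutes with the shift, generates a pointwise periodic semigroup, and is not injective when $|X|>1$. You notice this obstruction yourself in your final paragraph, but the ``dichotomy'' you then ask for (either $f$ is invertible or non-invertibility forces the whole semigroup to be small) \emph{is} the substance of the theorem, so at that point nothing has been established; there is no route to the statement through $\Aut(X,G)$ and Theorem~\ref{thm:pointwise_periodic_CA}.

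Your intermediate uniformization step is also unsupported. From $f^{m(x)+n(x)}(x)=f^{m(x)}(x)$ for every $x$ you cannot conclude $f^{M+N}=f^{M}$ by Baire category: the sets $A_{m,n}$ form an increasing (in $m$ and in divisibility of $n$) family of closed $G$-invariant sets covering $X$, and an increasing union of closed sets covering a compact space need not stabilize; moreover a closed shift-invariant set with nonempty interior need not be all of $X$ unless $X$ is transitive, which is exactly why \cite{GuRi10} assume transitivity. Indeed, ``weakly preperiodic implies preperiodic'' for arbitrary $G$-subshifts appears in the paper as a \emph{corollary} of Theorem~\ref{thm:pointwise_periodic_CA_monoid} (applied to the semigroup $\mathbb{N}$), so your step is essentially the one-generator case of the theorem you are trying to prove. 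The paper's actual argument is of a different kind: it proves Theorem~\ref{thm:M_is_finite} for commuting actions $G\curvearrowright X\curvearrowleft M$ with $G\curvearrowright X$ expansive and $X\curvearrowleft M$ pointwise periodic and faithful, using the expansive constant of the $G$-action to turn the condition ``$|x\cdot M|\le k$'' into closed/open conditions (the sets $C_j$ and $D_j$ built from Lemmas~\ref{lem:implies_bounded_orbits} and~\ref{lem:bounded_orbits_imply}) and then a compactness/chaining argument producing a point with infinite $M$-orbit whenever $M$ is infinite. Salvaging your outline would require replacing the invertibility reduction by an argument of that type.
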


As corollaries of Theorem~\ref{thm:pointwise_periodic_CA_monoid}, we obtain new proofs for some known results about cellular automata, and also a new result. Let $X \subset A^G$ be a subshift and $f : X \to X$ a cellular automaton (that is, $f \in \End(X,G)$). In the terminology of \cite{GuRi10}, $f$ is \emph{weakly preperiodic} if every point $x \in X$ satisfies $f^n(x) = f^{n+p}(x)$ for some $n \geq 0, p > 0$, and \emph{preperiodic} if $f^n = f^{n+p}$ for some $n \geq 0, p > 0$. Similarly $f$ is \emph{weakly nilpotent} if for some uniform configuration $y \in X$ and all $x \in X$ we have $f^n(x) = y$ for some $n$, and \emph{nilpotent} if $n$ is uniform.

In \cite[Proposition~2]{GuRi10}, it is shown that if $X$ is a transitive $\mathbb{Z}$-subshift, and $f : X \to X$ is a cellular automaton which is weakly preperiodic (weakly nilpotent), then it is preperiodic (nilpotent). In \cite{Sa12}, the result about nilpotency is generalized to all subshifts on the groups $\mathbb{Z}^d$, and this proof works on all countable groups, but the proof does not apply to preperiodicity. However, we obtain this from Theorem~\ref{thm:pointwise_periodic_CA_monoid}, by applying it to the semigroup $\mathbb{N}$:

\begin{cor}
Let $G$ be a countable group, $X \subset A^G$ be a subshift, and $f : X \to X$ a cellular automaton. Then $f$ is weakly preperiodic if and only if it is preperiodic.
\end{cor}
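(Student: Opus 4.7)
The backward implication (preperiodic implies weakly preperiodic) is immediate: if $f^n = f^{n+p}$ as maps, then in particular $f^n(x) = f^{n+p}(x)$ for every $x \in X$. So the plan is to focus on the forward implication.

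For the forward direction, I would apply Theorem~\ref{thm:pointwise_periodic_CA_monoid} to the sub-semigroup $S = \langle f \rangle = \{f^n : n \geq 0\}$ of $\End(X,G)$. This semigroup is finitely generated (by the single element $f$). The key observation is that the hypothesis of weak preperiodicity is exactly the statement that every $S$-orbit in $X$ is finite: if $f^n(x) = f^{n+p}(x)$, then the forward orbit $\{f^k(x) : k \geq 0\}$ is contained in $\{x, f(x), \ldots, f^{n+p-1}(x)\}$. Hence $S$ is pointwise periodic in the sense appropriate for semigroup actions (all orbits finite).

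Applying Theorem~\ref{thm:pointwise_periodic_CA_monoid}, we conclude that $S$ itself is finite as a set. By the pigeonhole principle applied to the infinite sequence $f^0, f^1, f^2, \ldots$, there exist integers $0 \leq n < m$ with $f^n = f^m$ in $\End(X,G)$. Setting $p = m - n > 0$, we obtain $f^n = f^{n+p}$, so $f$ is preperiodic.

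There is no real obstacle here; the corollary is essentially a one-line deduction once Theorem~\ref{thm:pointwise_periodic_CA_monoid} is in hand, with the only substantive point being the correct reading of ``pointwise periodic'' for a cyclic semigroup (namely, finiteness of each forward orbit, which is precisely weak preperiodicity).
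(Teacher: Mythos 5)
Your proof is correct and is exactly the argument the paper intends: the authors derive this corollary from Theorem~\ref{thm:pointwise_periodic_CA_monoid} by applying it to the cyclic sub-semigroup generated by $f$, observing that weak preperiodicity is precisely finiteness of all forward orbits, and then extracting $f^n = f^{n+p}$ from finiteness of the semigroup by pigeonhole. The only detail worth noting is that faithfulness of the semigroup action (required in the underlying Theorem~\ref{thm:M_is_finite}) holds automatically for a sub-semigroup of $\End(X,G)$, which your argument implicitly uses.
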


\textbf{Acknowledgements:} We thank Sebasti\'{a}n Donoso, Fabien Durand, Alejando Maass and Samuel Petite for allowing us to include  their currently unpublished Proposition \ref{prop:epsilon_code_horoball}. We thank  Samuel Petite also for pointing out  the reference  \cite{MR3163024}. We thank Alexis Ballier for pointing out that Theorem 1.4 appears in his PhD thesis.

\section{Expansive actions}\label{sec:exp}

Throughout this paper $G$ will denote a 
group  that acts from the left on a compact metric space $(X,d)$ by homeomorphisms.
We denote this by $G \curvearrowright X$.
We emphasize that $G$ is not assumed to act by isometries. The results and properties discussed in this paper are not  sensitive to the specific metric on $X$, only on  the topology of $X$.
From now on $G$ will be a discrete finite or countable group.
\begin{defn}[Periodicity and pointwise periodicity for group actions]
The action $G \curvearrowright X$ is called \textbf{pointwise periodic} if every $x \in X$ has finite orbit.
Equivalently, the stabilizer of every $x \in X$ has finite index in $G$.
The action $G \curvearrowright X$ is called \textbf{periodic} if there is a finite index subgroup of $G$ that is contained in the stabilizer of every $x \in X$. Equivalently, the action  $G \curvearrowright X$ is periodic if it factors through a finite group.
\end{defn}

\begin{defn}[Expansiveness]\label{defn:expansive}
The action $G \curvearrowright X$ is called \textbf{expansive}  if there exists $\epsilon >0$ such that for every pair of distinct points $x,y \in X$ we have
\begin{equation}\label{eq:exp_epsilon_def}
 \sup_{g \in G} d(g(x),g(y)) > \epsilon.
\end{equation}
A number $\epsilon>0$ that satisfies \eqref{eq:exp_epsilon_def} for every pair of distinct points $x,y \in X$  is called an \emph{expansive constant} for the action.
\end{defn}

\begin{remark}
In some places in the literature a number $\epsilon>0$ that satisfies \eqref{eq:exp_epsilon_def} with non-strict inequality $\ge$ is considered an expansive constant. Clearly the existence of a positive expansive constant in this weaker sense is equivalent to the existence of a positive expansive constant in the stronger sense. The choice of a strict inequality in  \eqref{eq:exp_epsilon_def} seems to streamline various statements.
\end{remark}

Some classical examples of expansive actions are subshifts:

\begin{defn}[$G$-subshifts]
Let $G$ be a countable group and $A$ be a finite set. Consider   $A^G$  with the discrete product topology. $G$ acts on
$A^G$ by the shift:
$$ \forall g,h \in G: \sigma^g(x)_h = x_{g^{-1}h}.$$
 This system is called the \textbf{full shift} over $G$ with alphabet $A$.
A \textbf{$G$-subshift}  is a $G$-action which is  a subsystem of a full shift.
It is classical that every expansive $G$ action on a totally disconnected compact metric space is conjugate to a $G$-subshift.
\end{defn}

We now state a result that immediately implies Theorem \ref{thm:subshift_pointwise_periodic}:
\begin{thm}\label{thm:pointwise_periodic_fg_expansive}
Suppose  $G$ is a finitely generated group and $G \curvearrowright X$ is an expansive action that is pointwise periodic, then $X$ is finite.
\end{thm}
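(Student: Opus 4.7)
The key observation I will reduce to is the following: an expansive action of a \emph{finite} group $\Gamma$ on a compact metric space $Y$ forces $Y$ to be finite. Indeed, the continuous map $\Phi : Y \to Y^\Gamma$, $y \mapsto (gy)_{g \in \Gamma}$, satisfies $d_\infty(\Phi(y),\Phi(y')) > \epsilon$ whenever $y \neq y'$ by expansiveness, so the compact set $\Phi(Y) \subseteq Y^\Gamma$ is uniformly $\epsilon$-separated in the sup metric and therefore finite. Consequently, if we can show that the kernel $N := \bigcap_{x \in X} \stab(x)$ of the $G$-action has finite index in $G$, then the induced action of the finite group $G/N$ on $X$ is expansive with the same constant, and the observation forces $X$ to be finite.

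To produce such an $N$ of finite index, I combine Baire category with M.~Hall's theorem: a finitely generated group has only countably many subgroups of each finite index, and hence only countably many finite-index subgroups in total. For each finite-index $H \leq G$ the fixed set $\mathrm{Fix}(H) := \{x \in X : hx = x \text{ for all } h \in H\}$ is closed, and pointwise periodicity writes $X = \bigcup_H \mathrm{Fix}(H)$ as a countable union of closed sets in the Baire space $X$. So some $\mathrm{Fix}(H_0)$ has a non-empty interior $U_0$. Passing to the normal core $K_0 := \bigcap_{g \in G} g H_0 g^{-1}$, which remains of finite index and is normal in $G$, a direct computation using normality shows that $K_0$ fixes the open $G$-invariant set $V_0 := G\cdot U_0$ pointwise.

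The delicate step is to upgrade the local statement ``$K_0$ acts trivially on $V_0$'' to the global one ``some finite-index subgroup of $G$ acts trivially on $X$''. My plan is to iterate the Baire argument on the closed $G$-invariant complement $X \setminus V_0$, where the restricted action is still expansive with the same constant $\epsilon$ and still pointwise periodic. This yields further finite-index normal subgroups $K_1, K_2, \ldots$ and open pieces $V_1, V_2, \ldots$, generating a descending (possibly transfinite) chain of closed $G$-invariant subsets of $X$, which by second countability terminates at $\emptyset$ at some countable ordinal. The main obstacle, and the point where expansiveness must be used essentially (beyond the Baire category argument, which is otherwise soft), is to force this iteration to terminate after \emph{finitely many} steps. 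If that is achieved, the intersection $K_0 \cap K_1 \cap \cdots \cap K_m$ of the finitely many finite-index normal subgroups produced is itself of finite index and is contained in the kernel $N$, completing the argument via the first paragraph. I expect this finite termination to come from a compactness-and-separation argument in the spirit of the expansive subdynamics theory developed in the paper, exploiting that the pieces $V_i$ are, in a suitable sense, uniformly separated by the expansive constant and so cannot accumulate in the compact space $X$.
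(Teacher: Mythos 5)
Your opening reduction is sound (an expansive action of a finite group on a compact space has a finite phase space; this is in effect Lemma \ref{lem:exp_finite} with $F$ the whole group), and the Baire step is correct as far as it goes: a finitely generated group has only countably many finite-index subgroups, $X=\bigcup_H \mathrm{Fix}(H)$ is a countable union of closed sets, so some $\mathrm{Fix}(H_0)$ has nonempty interior, and the normal core $K_0$ fixes the open invariant set $V_0=G\cdot U_0$ pointwise. The genuine gap is the step you yourself flag: you never prove that the iteration terminates after \emph{finitely} many stages, you only state that you ``expect'' this from a compactness-and-separation argument. That expectation is where the entire content of the theorem sits. Everything you actually establish is soft --- it uses neither expansiveness nor the geometry of $G$ beyond countability of its finite-index subgroup lattice --- whereas without expansiveness the conclusion is false, so the hypothesis must do real work precisely in the unproven step. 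Note also that second countability only gives termination at a countable ordinal; that would produce countably many finite-index normal subgroups $K_\alpha$ whose intersection need not have finite index, so nothing short of genuinely finite termination closes your argument. Finally, the heuristic you offer for finite termination (``the pieces $V_i$ are uniformly separated by the expansive constant'') does not obviously hold: expansiveness separates the \emph{orbits} of two distinct points somewhere in $G$, but points lying in different pieces $V_i$, $V_j$ can be arbitrarily close in $X$, so there is no evident obstruction to the $V_i$ accumulating.

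For comparison, the paper's proof injects expansiveness and finite generation through an entirely different mechanism: coding along horoballs. Pointwise periodicity gives, for a fixed pair $x\neq y$, a radius $N$ such that every point of the orbit of $(x,y)$ under $G$ is realized inside any translate $S^N h$; since every $S$-horoball contains arbitrarily large balls (Lemma \ref{lem:horoball_contains_ball}), a pair whose orbits stay $\epsilon$-close on a horoball stays $\epsilon$-close on all of $G$ and hence is a single point. Proposition \ref{prop:epsilon_code_horoball} (via Lemmas \ref{lem:coding_ball} and \ref{lem:exp_finite}) supplies the converse: if $X$ were infinite, some horoball would admit such a proximal pair. Your plan never engages with this (or any substitute) quantitative consequence of pointwise periodicity, so as written it is a reduction of the theorem to an unproved claim of comparable difficulty rather than a proof.
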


The following simple example shows that the conclusion of Theorem \ref{thm:pointwise_periodic_fg_expansive} fails if one removes the assumption that $G$ is finitely generated:
\begin{example}[An expansive pointwise periodic action of a non-finitely generated group that is not periodic]
\label{example:pp_not_periodic_not_fg}

Consider the group
$$ G := \bigoplus_{\mathbb{N}} \mathbb{Z}/2\mathbb{Z} =
\left\{ g \in (\mathbb{Z}/2\mathbb{Z})^\mathbb{N}:~ g_n =0 \mbox{ for all but a finite number of } n \in \mathbb{N}\right\}.$$
Up to group isomorphism $G$ is the group of polynomials over $\mathbb{Z}/2\mathbb{Z}$.

In other words, $G$ is the countable subgroup of $(\mathbb{Z}/2\mathbb{Z})^\mathbb{N}$ generated by the infinite set
$$S:=\left\{ s \in (\mathbb{Z}/2\mathbb{Z})^\mathbb{N}:~ s_n =1 \mbox{ for excactly one } n \in \mathbb{N}\right\}.$$
Let
$$
X := \left\{(1, \frac{1}{n}) :~ n \in \mathbb{N} \right\}\cup \{\left\{(-1,  \frac{1}{n}) :~ n \in \mathbb{N} \right\} \cup \{(1,0),(-1,0)\}.$$
$G$ acts on $X$ as follows for $g \in G$, and $(s,y) \in X$:
$$g(s,y)= \begin{cases} ((-1)^{g_n}s,y) & \mbox{ if } y= \frac{1}{n}\\
(s,0) & \mbox{ if } y=0
\end{cases}
$$
The action $\act{G}{X}$ is expansive because every distinct elements $x,y \in X$ there exists $g \in G$ so that the $d(g(x),g(y)) \ge 2$.
As $X$ is totally disconnected $\act{G}{X}$ is isomorphic to a $G$-subshift.
Also, every $G$-orbit has cardinality of at most $2$, yet the action is not periodic.
\end{example}

\begin{remark}
In Example \ref{example:pp_not_periodic_not_fg} there is a (uniform) finite upper bound on  cardinality of orbits.
For finitely generated group actions having a finite upper bound on  cardinality of orbits  implies periodicity, since there is only  finite number of subgroups with a given index.
\end{remark}

We present two proofs of  Theorem \ref{thm:subshift_pointwise_periodic}, as we believe  each of them sheds some extra light on different aspects of the result:
In Section \ref{sec:semigrp} we deduce Theorem \ref{thm:subshift_pointwise_periodic} from a general result about expansive actions of semigroups. In Section \ref{sec:horoballs}  below we deduce Theorem \ref{thm:subshift_pointwise_periodic} from a combination of other results about expansiveness and coarse geometry of finitely generated groups.

\section{Non-expansive horoballs for actions of finitely generated groups}
\label{sec:horoballs}

The following section adapts the concepts of ``$\epsilon$-coding'' among subsets of a group $G$ with respect to an action $\act{G}{X}$. We use this  to obtain a ``geometric''  proof of Theorem \ref{thm:pointwise_periodic_fg_expansive}. The concept of ``coding'' in this context was introduced by Boyle and Lind as part of a general framework of ``expansive subdynamics'', for $\mathbb{Z}^d$ actions \cite{MR1355295}. Here we apply an extension of this framework to general finitely generated groups that was communicated to us by Donoso, Maass, Durand and Petite.

\subsection{Fell Topology}
Given a topological space (possibly discrete) $\Omega$,
we will denote the collection of closed subsets of $\Omega$ by $\Omega^*$. We will consider $\Omega^*$ as a compact topological space, equipped with the Fell topology \cite{MR0139135}.
As in \cite{MR1230370}, this topology has an open basis consisting of sets of the form $\{ A \in \Omega^*~:~ A \cap U \ne \emptyset\}$ with $U \subset \Omega$ open and of the form $\{A \in \Omega^* ~:~  A \cap K = \emptyset\}$ where $K \subset \Omega$ is compact.
In this paper we always apply  the Fell topology  in the case  $\Omega=G$ is a locally compact Polish group.
In case $\Omega = G$ is a countable discrete  group, $\Omega^*=G^*$ is homeomorphic to $\{0,1\}^G$ with the product topology.

\subsection{$\epsilon$-coding for subsets of $G$}

\begin{defn}[Coding and expansive sets for group actions]

Fix an action $\act{G}{X}$, $A,B \subset G$ and $\epsilon >0$.
Let
\begin{equation}\label{eq:Delta_A_eps}
\Delta_\epsilon(A):= \left\{(x,y) \in X\times X\mbox{ s.t. } \sup_{h \in A}d(h(x),h(y)) \le \epsilon \right\}.
\end{equation}
We say that \textbf{$A$ $\epsilon$-codes $B$} with respect to an action  $G \curvearrowright X$ if
\begin{equation}\label{eq:A_codes_B}
\sup \left\{ d(g(x),g(y))~:~ (x,y) \in \Delta_\epsilon(A) \right\} < \epsilon \mbox{ for every } g \in B
\end{equation}

If \eqref{eq:A_codes_B} holds we will write
\begin{equation}G \curvearrowright X: A \vdash_\epsilon B,
\end{equation}
or simply by $ A \vdash_\epsilon B$ when the action $G \curvearrowright X$ is clear from the context.
Denote:
\begin{equation}\label{eq:E_B_epsilon}
E_{B; \epsilon} = \left\{ A \subset G ~:~ A  \vdash_\epsilon B  \right\}
\end{equation}
\end{defn}

We make some basic observations about this definition:
\begin{equation}\label{eq:coding_union}
E_{B_1 \cup B_2; \epsilon} = E_{B_1; \epsilon} \cap E_{B_2; \epsilon}
\end{equation}
\begin{equation}\label{eq:shift_coding}
A \vdash_\epsilon  B \mbox{ if and only if } Ag \vdash_\epsilon Bg.  \mbox{ Equivalently: }  E_{Bg;\epsilon} = \left(E_{B;\epsilon}\right)g.
\end{equation}
\begin{equation}\label{eq:coding_closed}
A \vdash_\epsilon  B \mbox{ if and only if } \forall C \Subset B: A \vdash_\epsilon C.
\end{equation}

\begin{equation}\label{eq:coding_transitive}
A \vdash_\epsilon B \mbox{ and } B \vdash_\epsilon C \mbox{ implies } A \vdash_\epsilon C
\end{equation}


We say that $A \subset G$ is \textbf{expansive} if there exists an expansive constant  $\epsilon >0$
so that $A \vdash_\epsilon G$. In particular, this implies the for some $\epsilon > 0$ we have
$$\sup_{g \in A} d(g(x),g(y)) \le \epsilon \ \Rightarrow x=y.$$

\begin{lem}\label{lem:finite_coding_subset}
Suppose $A \subseteq G$ and $A \vdash_\epsilon \{1\}$. Then there exists a finite set $F \subseteq A$ so that $F \vdash_\epsilon \{1\}$.
\end{lem}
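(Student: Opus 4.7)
The plan is a straightforward compactness argument applied to $X \times X$. The hypothesis $A \vdash_\epsilon \{1\}$ unfolds to: there exists $\delta < \epsilon$ with $d(x,y) \le \delta$ whenever $\sup_{h \in A} d(h(x),h(y)) \le \epsilon$. Contrapositively, whenever $d(x,y) > \delta$ there is at least one $h \in A$ with $d(h(x),h(y)) > \epsilon$. The goal is to replace this existential (which a priori may require arbitrarily large $h$ depending on $(x,y)$) by a single finite witness set, and this is exactly where compactness of $X \times X$ enters.

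Concretely, I would fix any intermediate $\eta$ with $\delta < \eta < \epsilon$ and consider the compact set $K := \{(x,y) \in X \times X : d(x,y) \ge \eta\}$. For every $(x,y) \in K$ the hypothesis furnishes an element $h_{(x,y)} \in A$ with $d(h_{(x,y)}(x), h_{(x,y)}(y)) > \epsilon$; since $h_{(x,y)}$ is a homeomorphism of $X$, the map $(x',y') \mapsto d(h_{(x,y)}(x'), h_{(x,y)}(y'))$ is continuous on $X \times X$, so the strict inequality $> \epsilon$ persists on an open neighborhood $U_{(x,y)}$ of $(x,y)$. By compactness of $K$ I extract a finite subcover $U_{(x_1,y_1)},\ldots,U_{(x_n,y_n)}$ and set $F := \{h_{(x_1,y_1)},\ldots,h_{(x_n,y_n)}\} \subseteq A$.

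To conclude, take any $(x,y) \in \Delta_\epsilon(F)$. Then $d(h_i(x), h_i(y)) \le \epsilon$ for each chosen $h_i$, which prevents $(x,y)$ from lying in any $U_{(x_i,y_i)}$, hence $(x,y) \notin K$, i.e.\ $d(x,y) < \eta$. Therefore $\sup\{d(x,y) : (x,y) \in \Delta_\epsilon(F)\} \le \eta < \epsilon$, which is precisely $F \vdash_\epsilon \{1\}$.

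The argument is short and the only real thing to watch is the strict-versus-nonstrict inequalities in the definitions of $\Delta_\epsilon(\cdot)$ (non-strict $\le \epsilon$) and of $\vdash_\epsilon$ (strict $< \epsilon$); inserting the buffer $\eta$ between $\delta$ and $\epsilon$ is exactly what reconciles them. There is no deeper obstacle. It is worth noting that essentially the same argument shows $A \vdash_\epsilon C$ implies $F \vdash_\epsilon C$ for some finite $F \subseteq A$ when $C$ is finite, which combined with property \eqref{eq:coding_closed} would give a more general finiteness statement; but the lemma as stated is exactly what the compactness of $X \times X$ yields.
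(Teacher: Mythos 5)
Your proof is correct. It does, however, use compactness in a different form than the paper: the paper's argument writes $A$ as an increasing union of finite sets $A_n$ (using countability of $G$), observes that $\Delta_\epsilon(A_n)$ is a decreasing sequence of compact sets with intersection $\Delta_\epsilon(A)$, and deduces $\sup\{d(x,y):(x,y)\in\Delta_\epsilon(A)\}=\inf_n\sup\{d(x,y):(x,y)\in\Delta_\epsilon(A_n)\}$, so some $A_n$ already works; your argument instead covers the compact set $K=\{(x,y):d(x,y)\ge\eta\}$ by open sets on each of which a single witness $h\in A$ separates points by more than $\epsilon$, and extracts a finite subcover. The two routes are dual faces of the same compactness, but yours has the small advantage of not using countability of $A$ at all (the paper's first line explicitly invokes countability of $G$ to build the exhaustion), and your buffer $\eta$ handles the strict-versus-nonstrict bookkeeping explicitly rather than through the $\inf$--$\sup$ identity. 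Your closing remark that the same argument gives a finite $F$ with $F\vdash_\epsilon C$ for finite $C$ is consistent with how the paper uses the lemma: there it is reduced to the case $C=\{1\}$ via \eqref{eq:coding_union} and \eqref{eq:shift_coding} in the proof of Lemma \ref{lem:code_F_open}.
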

\begin{proof}
Because $G$ is countable $A$ is at most countable so there exists a sequence of finite sets  $A_1 \subset A_2 \subset \ldots \subset A_n \subset \ldots \subset A$ so that $A = \bigcup_{n=1}^\infty A_n$.

Then every $\Delta_\epsilon(A_n) \subset X \times X$ and $\Delta_\epsilon(A)$ as given by \eqref{eq:Delta_A_eps} are  non-empty compact sets (non-empty because  they contain the diagonal) and
$\bigcap_{n=1}^\infty \Delta_\epsilon(A_n) =\Delta_\epsilon(A)$. 

Thus, using compactness of $\Delta_\epsilon(A)$ and continuity of $(x,y) \mapsto d(x,y)$ we have:
$$\sup\{d(x,y):~ (x,y) \in \Delta_\epsilon(A)\} = \inf_{n \ge 1} \sup\{d(x,y):~ (x,y) \in \Delta_\epsilon(A_n)\}.$$

The assumption that $A \vdash_\epsilon  \{1\}$ means that the left hand side is strictly less than $\epsilon$. 
It follows that $ \sup\{d(x,y):~ (x,y) \in \Delta_\epsilon(A_n)\}< \epsilon$ for some $n \ge 1$.

 Choose $F := A_n$  for such $n$.
We see that  $F \vdash_\epsilon \{1\}$, $F \subseteq A$ and $F$ is finite.
\end{proof}

Boyle and Lind observed that in a certain context ``expansiveness is an open condition''.
The following lemma expresses a similar idea:

\begin{lem}\label{lem:code_F_open}
Let $G$ be a countable group, $G \curvearrowright X$,  $B \subset G$ be a finite set and  $\epsilon >0$. Then
the set $E_{B; \epsilon}  \subseteq 2^G$  
is open. 
\end{lem}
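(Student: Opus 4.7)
The plan is to reduce openness of $E_{B;\epsilon}$ to the case $B=\{1\}$ and then invoke Lemma \ref{lem:finite_coding_subset} to produce a finite "witness" of coding whose presence in $A$ is a basic open condition in the Fell topology.

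First I would note that, since $B$ is finite, \eqref{eq:coding_union} gives $E_{B;\epsilon} = \bigcap_{g \in B} E_{\{g\};\epsilon}$, a finite intersection. Next, by \eqref{eq:shift_coding}, $E_{\{g\};\epsilon} = E_{\{1\};\epsilon} \cdot g$, and right multiplication by $g$ is a self-homeomorphism of $2^G$ with the Fell topology. Hence it is enough to prove that $E_{\{1\};\epsilon}$ is open.

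Fix $A_0 \in E_{\{1\};\epsilon}$. By Lemma \ref{lem:finite_coding_subset} applied to $A_0$ (which codes $\{1\}$), there exists a finite set $F \subseteq A_0$ with $F \vdash_\epsilon \{1\}$. Consider the neighborhood
\[
U_F := \{A \in 2^G : F \subseteq A\} = \bigcap_{g \in F} \{A \in 2^G : A \cap \{g\} \ne \emptyset\},
\]
which is a finite intersection of sub-basic open sets in the Fell topology (since $G$ is discrete, every singleton is open), hence open, and clearly $A_0 \in U_F$. The key monotonicity observation is that whenever $F \subseteq A$, the definition \eqref{eq:Delta_A_eps} gives $\Delta_\epsilon(A) \subseteq \Delta_\epsilon(F)$, and therefore
\[
\sup\{d(x,y) : (x,y) \in \Delta_\epsilon(A)\} \le \sup\{d(x,y) : (x,y) \in \Delta_\epsilon(F)\} < \epsilon,
\]
so $A \vdash_\epsilon \{1\}$. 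Consequently $U_F \subseteq E_{\{1\};\epsilon}$, proving openness at the arbitrary point $A_0$.

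There is no real obstacle here: the only non-trivial input is Lemma \ref{lem:finite_coding_subset}, which compactifies the coding condition into a finite-set condition; once that is available, the Fell-topology computation and the monotonicity of $\Delta_\epsilon$ in its argument do the rest, and the reduction to $B=\{1\}$ is purely formal from \eqref{eq:coding_union} and \eqref{eq:shift_coding}.
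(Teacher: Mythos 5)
Your proof is correct and follows essentially the same route as the paper's: reduce to $B=\{1\}$ via \eqref{eq:coding_union} and \eqref{eq:shift_coding}, extract a finite witness $F$ via Lemma \ref{lem:finite_coding_subset}, and observe that $\{A : F \subseteq A\}$ is an open neighborhood contained in $E_{\{1\};\epsilon}$. You merely make explicit the monotonicity $\Delta_\epsilon(A) \subseteq \Delta_\epsilon(F)$ for $F \subseteq A$, which the paper leaves implicit.
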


\begin{proof}
By \eqref{eq:coding_union} and \eqref{eq:shift_coding}
it is sufficient to prove the lemma with $B = \{1\}$.
 Suppose $A \in E_{\{1\};\epsilon}$.
By Lemma \ref{lem:finite_coding_subset} there exists  a finite set $F \Subset A$ so that $F \vdash_\epsilon \{1\}$. 
It follows that
$$\{ B \in G^*:~ F\subseteq B\} \subseteq E_{\{1\},\epsilon}.$$
Note that $\{ B \in G^*:~ F\subseteq B\}$ is an open neighborhood of $A$.We showed that every $F \in E_{\{1\},\epsilon}$ has a neighborhood contained in   $E_{\{1\},\epsilon}$, so  $E_{\{1\},\epsilon}$ is open.
\end{proof}


\begin{lem}\label{lem:exp_finite}
If there exists a finite  $F \subset G$ that is expansive with respect to  $G \curvearrowright X$. Then $X$ is finite.
\end{lem}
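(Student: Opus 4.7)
My plan is to show that the expansive finite set $F$ forces every point of $X$ to be isolated, so compactness of $X$ gives finiteness. The proof should be short, essentially unpacking the definition of expansiveness and using uniform continuity across the finitely many maps in $F$.

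First I would extract the concrete consequence of $F \vdash_\epsilon G$ for distinct points. If $x \neq y$, then since $\epsilon$ is an expansive constant, there exists $g \in G$ with $d(g(x),g(y)) > \epsilon$. But $F \vdash_\epsilon G$ says $\sup\{d(g(x'),g(y')) : (x',y') \in \Delta_\epsilon(F)\} < \epsilon$, so the pair $(x,y)$ cannot lie in $\Delta_\epsilon(F)$. Hence
\begin{equation*}
x \neq y \quad \Longrightarrow \quad \max_{h \in F} d(h(x),h(y)) > \epsilon.
\end{equation*}

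Next I would invoke uniform continuity. Each $h \in F$ is a homeomorphism of the compact metric space $X$ and is therefore uniformly continuous; since $F$ is finite, there exists a single $\delta > 0$ such that $d(x,y) < \delta$ implies $d(h(x),h(y)) < \epsilon$ for every $h \in F$, and hence $\max_{h \in F} d(h(x),h(y)) < \epsilon$. Combined with the previous display, this forces $x = y$ whenever $d(x,y) < \delta$. Thus every point of $X$ is isolated, so $X$ is discrete; since $X$ is also compact, $X$ is finite.

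There is no real obstacle here: the only ingredients are the definition of $A \vdash_\epsilon B$, the fact that an expansive constant for $F$ is in particular an expansive constant for the full $G$-action, and the finiteness of $F$ which lets uniform continuity produce a uniform $\delta$. The step that might look like it needs care — passing from ``$(x,y) \in \Delta_\epsilon(F)$ implies $d(g(x),g(y)) < \epsilon$ for all $g$'' to an actual separation of distinct points — is handled cleanly by contrapositive via expansiveness, exactly as above.
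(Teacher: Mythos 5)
Your proof is correct and follows essentially the same route as the paper's: both arguments reduce to the observation that $F \vdash_\epsilon G$ together with expansiveness forces any two distinct points to satisfy $\max_{h \in F} d(h(x),h(y)) > \epsilon$, and then use compactness plus continuity of the finitely many maps in $F$ to conclude finiteness. The only cosmetic difference is packaging: the paper phrases the last step via a maximal $(F,\epsilon)$-separated set, while you phrase it as every point being isolated; these are interchangeable.
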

\begin{proof}
Suppose  $\epsilon >0$ is an expansive constant for the action  $G \curvearrowright X$, $F \subset G$ is finite and $F \vdash_\epsilon G$.

Call $Y \subset X$ \textbf{$(F,\epsilon)$-separated } if $\max_{g \in F}d(g(x),g(y)) > \epsilon$ for every distinct $x,y \in Y$.
Let $X_0 \subset X$ be a maximal $(F,\epsilon)$-separated set. By compactness of $X$ (and finiteness of $F$ and the fact that $x \mapsto g(x)$ continuous for $g \in G$) it follows that $X_0$ is finite.

Take an arbitrary $z \in X$.
Because $X_0$ is maximal, for every $z \in X$ there exists $x \in X_0$ so that  $\max_{g \in F}d(g(x),g(z)) \le \epsilon$. Because $F \vdash_\epsilon G$ it follows that $d(g(x),g(z)) < \epsilon$ for all $g \in G$, so $z=x$.
We conclude that $X = X_0$, so $X$ is finite.
\end{proof}
\subsection{Horoballs for finitely generated groups}

For the rest of this section $G$ will be a finitely generated infinite group with a finite generating set  $S \subset G$. For convenience we assume that  $1 \in S$ and $S= S^{-1}$.
Given $S$ as above, the \emph{word metric} $\rho_S$ on $G$ is defined by
$$\rho(g_1,g_2) := |g_1g_2^{-1}|_S,$$
where
$$|g|_S := \min \{ n \in \mathbb{N}~:~ \exists s_1,\ldots,s_n \mbox{  s.t. } g = s_1\ldots s_n\}.$$
\begin{remark}
The metric $\rho$ corresponds to the \emph{left} Cayley graph of $G$ with respect to the generating set $S$. This metric is invariant with respect to multiplication from the \emph{right}. Our choice for using the left Cayley graph corresponds to the fact that we will assume $G$ acts on $X$ from the left.
\end{remark}

With respect to the word metric $\rho_S$ every ball is of the form
$$S^ng = \left\{ s_1 \cdot s_2 \cdot \ldots \cdot s_n \cdot g:~ s_1,\ldots,s_n \in S\right\},$$
for some $g \in G$ and $n \ge 0$.
 We recall that in our case because $G$ is discrete $G^*=2^G$ is the collection of subsets of $G$. The Fell topology makes it a compact space, homeomorphic to the Cantor set.

\begin{defn}[$S$-horoball]
An \textbf{$S$-horoball} is a subset $H \in G^*$  that  is a limit (with respect to the Fell topology on $G^*$) of balls $S^{r_n}g_n$ with $r_n \to \infty$  and so that $H$ and $H^c$ are both non-empty.
\end{defn}


The term ``horoball'' is classically used in  hyperbolic geometry. Below we prove two simple lemmas about $S$-horoballs.
These are special cases of more general (presumably well-known) results about horoballs in proper, locally compact metric spaces that  are ``large-scale geodesic'' in the sense of  \cite[Definition $3.B.1$]{MR3561300}.


\begin{remark}
On $\mathbb{Z}^2$, with respect to the standard set of generators $S$-horoballs correspond to limits of $\ell^1$-balls, that is, translates of  quadrants and half-planes defined by the lines $x=y$ and $x=-y$. On the groups $\mathbb{Z}^d$, it is often natural to use the $\ell^2$-metric (for which horoballs are discretizations of half-spaces).
\end{remark}

\begin{lem}\label{lem:S_horoballs_exist}
Let $G$ be a finitely generated group with a finite generating set $S$. If $G$ is infinite, then there exists an $S$-horoball.
\end{lem}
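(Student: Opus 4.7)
The plan is to produce a sequence of balls $S^{r_n} g_n$ with $r_n \to \infty$ whose subsequential Fell limit is a proper non-empty subset of $G$. The idea is to center the balls at successive points of an infinite geodesic ray, with radii chosen just small enough that the identity $1$ lies just outside each ball, while a fixed non-identity element remains inside.

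The first step is to produce the ray via K\"onig's lemma. Since $G$ is infinite and $S$ is finite, the left Cayley graph of $G$ is infinite and locally finite, so the tree of finite geodesic paths rooted at $1$ is infinite and finitely branching. An infinite branch yields a sequence $1 = v_0, v_1, v_2, \ldots$ with $|v_n|_S = n$ and $\rho_S(v_{n-1}, v_n) = 1$ for all $n \geq 1$. Since sub-paths of geodesics are geodesics, this gives $\rho_S(v_m, v_n) = |m - n|$ for all $m, n \geq 0$.

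With the ray in hand, I would set $r_n = n - 1$ and $g_n = v_n$, so that $B_n := S^{n-1} v_n$ is the ball of radius $n - 1$ centered at $v_n$. Since $\rho_S(v_n, 1) = n > n - 1$, the identity $1$ is never in $B_n$; since $\rho_S(v_n, v_1) = n - 1$, the element $v_1$ lies in $B_n$ for every $n \geq 2$. By compactness of $G^* \cong \{0,1\}^G$ in the Fell topology, extract a subsequence $(n_k)$ along which $B_{n_k} \to H$. Then $1 \notin H$ and $v_1 \in H$, so $H$ is a proper non-empty closed subset of $G$; together with $r_{n_k} = n_k - 1 \to \infty$, this makes $H$ an $S$-horoball by definition.

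The only substantive step is the existence of the infinite geodesic ray. Everything else is essentially bookkeeping: the radii are chosen precisely so that the two conditions $H \neq \emptyset$ and $H \neq G$ are each witnessed by a concrete element, namely $v_1$ and $1$ respectively.
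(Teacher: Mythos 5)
Your proof is correct and follows essentially the same route as the paper: balls of radius $n-1$ centered at elements of word length $n$, so that $1$ lies just outside each ball while a nearby element lies inside, followed by a compactness/Fell-limit argument in $G^*\cong\{0,1\}^G$. The only difference is that you invoke K\"onig's lemma to pin down a single fixed witness $v_1$ contained in every ball, whereas the paper takes arbitrary $g_n$ with $|g_n|_S=n$ and instead uses finiteness of $S$ to conclude that the limit still meets $S$; both devices play the same role.
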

\begin{proof}
Because $G$ is infinite there exists  a sequence of elements $(g_n)_{n \in \mathbb{N}}$ with $g_n \in G$ and $|g_n|_S = n$. 
For every $n \in \mathbb{N}$  we have $ 1 \not\in S^{n-1}g_n$ and $S \cap S^{n-1}g_n \ne \emptyset$.
Let $H \in G^*$ be a limit point of $\{S^{n-1}g_n\}_{n \in \mathbb{N}}$. Then $S \cap H \ne \emptyset$ so $H \ne \emptyset$ and $1 \not\in H$ so $H \ne G$. It follows that $H$ is an $S$-horoball.
\end{proof}

\begin{lem}\label{lem:horoball_contains_ball}
Every $S$-horoball contains arbitrarily large balls.
\end{lem}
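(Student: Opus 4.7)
The plan is to show that for every fixed $k \geq 0$, the horoball $H = \lim_{n} S^{r_n} g_n$ (with $r_n \to \infty$) contains a ball $S^k g$ for some $g \in G$. The idea is to locate $g$ as a point in a suitable ``shrunken'' limit: by compactness of the Fell topology on $G^{*}$, I pass to a subsequence along which the balls $S^{r_n - k} g_n$ (well-defined once $r_n \geq k$) also converge, to some $H_k \in G^{*}$. Once I know $H_k \neq \emptyset$, any $g \in H_k$ satisfies $g \in S^{r_n - k} g_n$ eventually along the subsequence, so $S^k g \subseteq S^{r_n} g_n$ eventually, and Fell convergence to $H$ yields $S^k g \subseteq H$.

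The main task is therefore to establish $H_k \neq \emptyset$, for which I plan to prove the stronger inclusion $H \subseteq S^k \cdot H_k$. Given $h \in H$, for each large $n$ in the subsequence I have $h \in S^{r_n} g_n$, so I can write $h g_n^{-1} = s_1^{(n)} s_2^{(n)} \cdots s_{r_n}^{(n)}$ as a product of exactly $r_n$ generators from $S$ (padding with $1 \in S$ when the minimal length is strictly less). Setting $s^{(n)} = s_1^{(n)} \cdots s_k^{(n)} \in S^k$, the complementary tail gives $(s^{(n)})^{-1} h \in S^{r_n - k} g_n$. Now finite generation of $G$ enters decisively: the set $S^k$ is finite, so pigeonhole produces a single $s \in S^k$ with $s^{(n)} = s$ for infinitely many $n$ in the subsequence, whence $s^{-1} h \in S^{r_n - k} g_n$ for those $n$. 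Since the Fell topology on $G^{*}$ is the product topology on $\{0,1\}^G$, membership in a Fell limit is determined pointwise, and a point appearing in infinitely many terms of a convergent sequence must lie in its limit; this forces $s^{-1} h \in H_k$. Taking $h \in H$ (nonempty by the definition of horoball) yields $H_k \neq \emptyset$.

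The main obstacle I anticipate is psychological rather than technical: the centers $g_n$ must drift to infinity (otherwise the balls would exhaust $G$ in the Fell limit and one would get $H = G$, contradicting that $H$ is a horoball), so there is no ``natural'' center of $H$ to serve as the center of the desired ball $S^k g$. The trick above sidesteps this by using Fell compactness to harvest a shrunken limit $H_k$ and the finiteness of $S^k$ --- that is, local finiteness of the Cayley graph --- to bridge the gap between $H$ and $H_k$.
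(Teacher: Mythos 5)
Your proof is correct and follows essentially the same route as the paper's: take $h\in H$, write it as a word of length $r_n$ over $S$ applied to $g_n$, use finiteness of $S$ to freeze the length-$k$ prefix along a subsequence, and strip that prefix off to obtain the center of a ball $S^k g\subseteq S^{r_n}g_n$ for all large $n$, hence $S^kg\subseteq H$ by pointwise (Fell) convergence. The only cosmetic differences are your auxiliary limit $H_k$ of the shrunken balls (the paper concludes directly from $S^R\tilde h\subseteq S^{r_n}g_n$) and your padding of the word with $1\in S$, which neatly avoids the paper's use of $H^c\neq\emptyset$ to force $\rho_S(g_n,h)=r_n$.
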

\begin{proof}
Let $H = \lim_{ n \to \infty}  S^{r_n}g_n$ be a horoball, with $g_n \in G$, $\lim_{n\to \infty}r_n=\infty$.

Fix $R>0$.  We need to show that there exists some $\tilde h \in G$ so that $S^R \tilde h \subset H$.
Because $H$ and $H^c$ are both non-empty, there exists $h \in H$ and $s \in S$ such that $sh \not \in H$. It follows that $\rho_S(g_n,h) = r_n$ for all large $n$.
So for all large $n$ there exist $s_1^{(n)}, \ldots, s_{r_n}^{(n)} \in S$ so that
\begin{equation}\label{eq:h_geo_def}
h =  s_{1}^{(n)} \cdots s_{r_n}^{(n)}g_n.
\end{equation}
By removing finitely many elements we can assume  that $r_n > R$ for all $n \in \mathbb{N}$.
Because $S$ is finite by passing to a subsequence  we can also assume that there exist $s_1,\ldots,s_{R-1} \in S$ so that  $s_k^{(n)} = s_k$ for all  $n$ and all $1\le k < R$.
Let
\begin{equation}\label{eq:tilde_h_geo}
\tilde h :=  s_{R-1}^{-1} \ldots s_{1}^{-1}h  = s_{R}^{(n)} \ldots s_{r_n}^{(n)}g_n,
\end{equation}
where the right equality holds for large $n$ by \eqref{eq:h_geo_def}.
It follows that $\rho_S(g_n,\tilde h) =r_n -R$ for $n$ large, so $S^{R}\tilde h \subset S^{r_n}g_n$ for all large $n$.
It follows that $S^R \tilde h\subset H$, completing the proof.
\end{proof}

\begin{lem}\label{lem:Ball_code_G}
Let $S$ be a finite generating set of $G$.
If  $S^r \vdash_\epsilon S^{r+1}$ for some $r>0$, then $S^r \vdash_\epsilon G$.
\end{lem}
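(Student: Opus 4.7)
The plan is to prove by induction on $k \ge 0$ that $S^r \vdash_\epsilon S^{r+k}$, and then to invoke property~\eqref{eq:coding_closed} together with the identity $G = \bigcup_{k \ge 0} S^{r+k}$ (valid since $S = S^{-1}$ generates $G$) to conclude $S^r \vdash_\epsilon G$. The base case $k = 1$ is the hypothesis, and $k = 0$ follows from $S^r \subseteq S^{r+1}$, which holds because $1 \in S$.

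For the inductive step, assume $S^r \vdash_\epsilon S^{r+k}$ and pick $g \in S^{r+k+1}$. Using the factorization $S^{r+k+1} = S^{r+1} \cdot S^k$, write $g = w t$ with $w \in S^{r+1}$ and $t \in S^k$. The crucial substep is to check that, for every $(x,y) \in \Delta_\epsilon(S^r)$, the image pair $(t(x), t(y))$ also lies in $\Delta_\epsilon(S^r)$. Unwinding the definitions, this is equivalent to $(x,y) \in \Delta_\epsilon(S^r t)$, i.e.\ $d(v(x), v(y)) \le \epsilon$ for every $v \in S^r t$. Since $S^r t \subseteq S^{r+k}$, the inductive hypothesis supplies $d(v(x), v(y)) < \epsilon$ for each such $v$, so the condition holds.

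Once $(t(x), t(y)) \in \Delta_\epsilon(S^r)$, the base hypothesis $S^r \vdash_\epsilon S^{r+1}$ applied at $w \in S^{r+1}$ gives
\[
d(g(x), g(y)) \;=\; d(w(t(x)), w(t(y))) \;\le\; \sup_{(x',y') \in \Delta_\epsilon(S^r)} d(w(x'), w(y')) \;<\; \epsilon,
\]
with the final bound depending on $w$ (hence on $g$) but uniform in $(x,y) \in \Delta_\epsilon(S^r)$. Taking the sup on the left then yields $S^r \vdash_\epsilon S^{r+k+1}$, closing the induction. I do not anticipate any serious obstacle; the only nuance is orienting the shift property~\eqref{eq:shift_coding} correctly, because the word metric is right-invariant and balls have the form $S^n g$: the equivalence $(x,y) \in \Delta_\epsilon(S^r t) \iff (t(x), t(y)) \in \Delta_\epsilon(S^r)$ is the instance of~\eqref{eq:shift_coding} that drives the argument.
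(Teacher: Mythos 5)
Your proof is correct and follows essentially the same route as the paper's: an induction on word length driven by the right-shift invariance of $\epsilon$-coding and transitivity, finished off by \eqref{eq:coding_closed}. The only cosmetic difference is that you decompose $S^{r+k+1}$ as $S^{r+1}\cdot S^k$ and unwind the definition of $\Delta_\epsilon$ by hand (carefully preserving the strict inequality), whereas the paper decomposes it as $S^{r+n}\cdot S$ and cites \eqref{eq:shift_coding}, \eqref{eq:coding_union} and \eqref{eq:coding_transitive} directly.
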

\begin{proof}
We will prove by induction that $S^r \vdash_\epsilon S^{r+n}$ for all $n \ge 0$. By \eqref{eq:coding_closed} this will imply $S^r \vdash_\epsilon G$.

The case $n = 1$ follows from the hypothesis. Assume by induction that $S^r \vdash_\epsilon S^{r+n}$.  By \eqref{eq:shift_coding}, $S^rs \vdash_\epsilon S^{r+n}s$ for every $s \in S$.
By \eqref{eq:coding_union} it follows that
$$ S^{r+1} = S^r\cdot S  \vdash_\epsilon S^{r+n} \cdot S = S^{r+n+1}.$$
So by \eqref{eq:coding_transitive}, $$S^r \vdash_\epsilon S^{r+1} \vdash_\epsilon S^{r+n+1},$$
we conclude that $S^r \vdash_\epsilon S^{r+n+1}$, as required.
\end{proof}

\begin{lem}
\label{lem:coding_ball}
Suppose $S$ is a finite generating set for an infinite group $G$ and $\epsilon >0$.
If
$G \curvearrowright X:H \vdash_\epsilon G$ for every $S$-horoball $H$, then there is some $r>0$ so that $G \curvearrowright X:S^r \vdash_{\epsilon} G$.
\end{lem}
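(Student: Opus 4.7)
The plan is to argue by contradiction, exploiting openness of $E_{\{1\};\epsilon}$ (Lemma \ref{lem:code_F_open}) together with compactness of $G^*$ in the Fell topology. Suppose, for contradiction, that $S^r \not\vdash_\epsilon G$ for every $r \ge 1$. By the contrapositive of Lemma \ref{lem:Ball_code_G}, $S^r \not\vdash_\epsilon S^{r+1}$ for every $r$; hence there is some $g_r \in S^{r+1}$ with $S^r \not\vdash_\epsilon \{g_r\}$, and by \eqref{eq:shift_coding} the translate $A_r := S^r g_r^{-1}$ fails to $\epsilon$-code $\{1\}$. Thus $A_r$ lies in the closed set $F := G^* \setminus E_{\{1\};\epsilon}$ for every $r$.

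By compactness of $G^*$ I pass to a Fell-convergent subsequence $A_{r_k} \to H$; since $F$ is closed, $H \in F$. The key step is to identify $H$: being a Fell-limit of balls $S^{r_k} g_{r_k}^{-1}$ with $r_k \to \infty$, the set $H$ is either $\emptyset$, $G$, or (when both $H$ and $H^c$ are non-empty) an $S$-horoball. I rule out $H = \emptyset$ by showing $A_r \cap S \ne \emptyset$ for every $r$: the bound $|g_r|_S \le r+1$ gives either $1 \in A_r$ when $|g_r|_S \le r$, or, writing $g_r = s_1 \cdots s_{r+1}$ as a word in $S$ of minimal length, $|s_1^{-1} g_r|_S \le r$ so that $s_1^{-1} \in A_r \cap S$. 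Since $S$ is finite, pigeonhole lets me pass to a further subsequence along which some fixed $s_0 \in S$ lies in every $A_{r_k}$, forcing $s_0 \in H$.

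It remains to show that both $G$ and every $S$-horoball lie in $E_{\{1\};\epsilon}$, which will contradict $H \in F$. For an $S$-horoball this is immediate from the hypothesis $H \vdash_\epsilon G$, specialised to $\{1\} \subseteq G$. For $H = G$, Lemma \ref{lem:S_horoballs_exist} provides some $S$-horoball $H_0$, the hypothesis gives $H_0 \vdash_\epsilon \{1\}$, and the monotonicity $A \subseteq A' \Rightarrow \Delta_\epsilon(A') \subseteq \Delta_\epsilon(A)$ applied to $H_0 \subseteq G$ upgrades this to $G \vdash_\epsilon \{1\}$. The main obstacle is ensuring the Fell-limit $H$ is non-empty, which is precisely why one chooses $g_r$ from the bounded enlargement $S^{r+1}$ of $S^r$ rather than from an arbitrary element of $G$; this keeps $A_r$ anchored near the identity and prevents it from escaping to infinity in the Fell topology.
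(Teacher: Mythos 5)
Your proof is correct and takes essentially the same route as the paper's: argue by contradiction via Lemma \ref{lem:Ball_code_G}, translate to get $S^{r}g_r^{-1} \not\vdash_\epsilon \{1\}$, take a Fell-limit point which must be an $S$-horoball or all of $G$ (both of which $\epsilon$-code $\{1\}$ by hypothesis plus monotonicity), and invoke the openness of $E_{\{1\};\epsilon}$ from Lemma \ref{lem:code_F_open} to reach the contradiction. The only difference is that you spell out details the paper leaves implicit, namely why $S^{r}g_r^{-1}\cap S \neq \emptyset$ and the pigeonhole/subsequence step guaranteeing the limit is non-empty.
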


\begin{proof}
By Lemma \ref{lem:S_horoballs_exist} the assumption that $G$ is  infinite implies that it has an $S$-horoball. So the assumption that $G \curvearrowright X:H \vdash_\epsilon G$ for every $S$-horoball $H$ in particular implies that $G \curvearrowright X: G \vdash_\epsilon G$.

Suppose for every $r > 0$,  $S^r \not \vdash_\epsilon G$.  By Lemma \ref{lem:Ball_code_G}  for every $r >0$,  $S^r \not \vdash_\epsilon  S^{r+1}$, so for every $r>0$ there exists $g_r \in S^{r+1}$ such that $S^r \not \vdash_\epsilon \{g_r\}$. Note that we do not claim $g_r \in S^{r+1} \setminus S^r$. By \eqref{eq:shift_coding} it follows that $S^{r}g_r^{-1} \not \vdash_\epsilon \{1\}$.

Let $H \in 2^G$ be a  limit point of $\{S^{r}g_r^{-1}\}_{ r>0}$.


Because $S^{r}g_r^{-1}\cap S \ne \emptyset$ for all $r>0$ it follows that $H \cap S \ne \emptyset$ so $H \ne \emptyset$.  So either $H$ is a horoball or $H=G$.  By Lemma \ref{lem:code_F_open}, $H \not \vdash_\epsilon G$, in contradiction to our assumption.
\end{proof}

\begin{prop}[Donoso-Maass-Durand-Petite, following Boyle-Lind \cite{MR1355295}]\label{prop:epsilon_code_horoball}
Let $G$ be an infinite, finitely generated group with a finite generating set $S$.
If  $G  \curvearrowright X$ and $X$ is infinite, then for every $\epsilon >0$ there exists an $S$-horoball  $H$ in $G$ and a distinct pair of points $x, y \in X$ so that 
$d(g(x),g(y)) \le \epsilon$ for every $g \in H$.
\end{prop}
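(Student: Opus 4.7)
The plan is to argue the contrapositive, combining Lemma~\ref{lem:coding_ball} with Lemma~\ref{lem:exp_finite}. I would first dispose of the case where $\epsilon$ is not already an expansive constant for $G \curvearrowright X$. By Definition~\ref{defn:expansive}, in that case there exist distinct $x,y \in X$ with $\sup_{g \in G} d(g(x),g(y)) \le \epsilon$. Since $G$ is infinite, Lemma~\ref{lem:S_horoballs_exist} produces some $S$-horoball $H \subseteq G$, and this same pair $(x,y)$ together with $H$ witnesses the conclusion of the proposition, since $H \subseteq G$.

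The substantive case is when $\epsilon$ is an expansive constant. Here I would assume for contradiction that the conclusion of the proposition fails, so that for every $S$-horoball $H$ and every pair of distinct points $x,y \in X$ one has $\sup_{g \in H} d(g(x),g(y)) > \epsilon$. The key point to recognize is that this negation is precisely the statement that $\Delta_\epsilon(H)$ coincides with the diagonal of $X \times X$ for every $S$-horoball $H$. Since the action sends the diagonal to itself, for every $g \in G$ the supremum $\sup\{d(g(x),g(y)) : (x,y) \in \Delta_\epsilon(H)\}$ equals $0 < \epsilon$, which is exactly the assertion $H \vdash_\epsilon G$ in the sense of \eqref{eq:A_codes_B}.

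With $H \vdash_\epsilon G$ established for every $S$-horoball $H$, Lemma~\ref{lem:coding_ball} produces some $r > 0$ such that $S^r \vdash_\epsilon G$. Since $S^r$ is finite and $\epsilon$ is an expansive constant, $S^r$ is expansive in the sense used by Lemma~\ref{lem:exp_finite}, and that lemma immediately forces $X$ to be finite, contradicting the standing hypothesis that $X$ is infinite.

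I do not anticipate a real obstacle: the whole argument is essentially the observation that the negation of the conclusion collapses $\Delta_\epsilon(H)$ to the diagonal, which matches the hypothesis of Lemma~\ref{lem:coding_ball} for free. The only mildly delicate point of bookkeeping is the preliminary split on whether $\epsilon$ is itself an expansive constant, which has to be handled separately in order to be able to invoke Lemma~\ref{lem:exp_finite} in the main branch.
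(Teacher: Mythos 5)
Your proposal is correct and follows essentially the same route as the paper's own proof: the same case split on whether $\epsilon$ is an expansive constant, then the observation that failure of the conclusion forces $H \vdash_\epsilon G$ for every $S$-horoball, followed by Lemma~\ref{lem:coding_ball} and Lemma~\ref{lem:exp_finite}. The only difference is that you spell out explicitly (via $\Delta_\epsilon(H)$ collapsing to the diagonal) why the negation yields $H \vdash_\epsilon G$, a step the paper leaves implicit.
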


\begin{proof}
If $\epsilon >0$ is not an expansive constant for the action $G  \curvearrowright X$, then by definition there exists  a distinct pair of points $x, y \in X$ so that 
$d(g(x),g(y)) \le \epsilon$ for every $g \in G$, and so the conclusion is satisfied for every horoball $H$ (there is at least one horoball by Lemma \ref{lem:S_horoballs_exist}).
Now suppose $\epsilon >0$ is an expansive constant. If the conclusion fails, then 
 $H \vdash_\epsilon G$ for every $S$-horoball $H$. By the previous lemma this  implies that  $S^r \vdash_{\epsilon} G$ for some $r$. By Lemma \ref{lem:exp_finite}, it follows that $X$ is finite.
\end{proof}

\begin{proof}[Proof of Theorem \ref{thm:pointwise_periodic_fg_expansive}]
Suppose  $G \curvearrowright X$ is expansive and pointwise periodic.
We will show that for every expansive constant $\epsilon >0$ and every horoball $H \subset G$, $H \vdash_\epsilon G$. 
By Proposition \ref{prop:epsilon_code_horoball}, this will imply that $X$ is finite.

Let  $\epsilon >0$ be an expansive constant, and $H \subset G$ an  $S$-horoball.
Suppose $x,y \in X$ and
$$d(g(x),g(y)) \le \epsilon ~ \forall g \in H.$$
Because the action is pointwise periodic there exists $N >0$ so that for every $g,h \in G$ there exists some $\tilde g \in S^N$ so that $g(x)=\tilde gh(x)$ and $g(y)=\tilde gh(y)$.
By Lemma \ref{lem:horoball_contains_ball} there exists some $h \in G$ so that $ S^Nh \subset H$.
Fix $g \in G$. There exists $\tilde g \in S^N$  so that  $g(x)=\tilde gh(x)$ and $g(y)=\tilde gh(y)$.
Because $\tilde gh \in S^Nh \subset H$, $d(\tilde gh(x),\tilde gh(y)) \le \epsilon$, so $d(g(x),g(y)) \le \epsilon$.

Since $g \in G$ was arbitrary and $\epsilon >0$  an expansive constant, it follows that $x=y$. In particular this shows that $d(g(x),g(y)) < \epsilon$ for every $g \in G$.

This shows that $H \vdash_\epsilon G$, and completes the proof.
\end{proof}

\section{Semigroups and inverse orbits}\label{sec:semigrp}

In this section we 
extend  Theorem \ref{thm:pointwise_periodic_fg_expansive}  to actions of semigroups.
The definition of an expansive action for semigroups is essentially the same as for expansive group actions (Definition \ref{defn:expansive} above):
\begin{defn}
 A right action $X \curvearrowleft M$ of a semigroup on $X$ is \textbf{expansive} if for some $\epsilon > 0$, for all distinct $x, y \in X$ 
there exists $m \in M$ such that  $d(x\cdot m, y \cdot m) > \epsilon$. 
\end{defn}

\begin{remark}
Recall that a semigroup $M$ that contains an identity element $1 \in M$ is called a \textbf{monoid}.
For a right action $X \curvearrowleft M$ of a monoid on $X$ we require that $1$ acts as the identity map.
Given a right action $X \curvearrowleft M$  of a semigroup $M$ without an identity element we can always add an identity element and extend the action to the generated monoid.
If the original action of the semigroup is expansive, the generated monoid also acts expansively. The converse is false in general. In particular, after a moment of reflection it follows from  Proposition~\ref{prop:PosExp} below  that the semigroup $\NN = \{1,2,\ldots\}$ does not act expansively on any infinite compact space, yet the  monoid $\mathbb{Z}_+ = \{0,1,2,\ldots\}$ acts expansively on every one-sided subshift. By Example~\ref{ex:AllOrbitsFinite}, $\NN^2$ does act expansively on an infinite compact space.
\end{remark}

This section will be mostly independent of the previous one, and in particular it will provide an alternative proof to Theorem \ref{thm:pointwise_periodic_fg_expansive} that does not involve horoballs and coding.
For this purpose we introduce some definitions that will be needed in order to reformulate the notion of ``pointwise periodic'' to this setting:

\begin{defn}\label{def:inverse_orbit}
Let  $X \curvearrowleft M$ be a right action of a semigroup on $X$.

The \textbf{orbit} of $x \in X$ is
$$\mathcal{O}_M(x) := \{ y\in X~:~  y\in x \cdot M \}.$$
The \textbf{inverse orbit} of $x \in X$ is defined by
\begin{equation}
\invorb_M(x):= \{ y\in X~:~  x\in y \cdot M \}.
\end{equation}
The action $X \curvearrowleft M$ is \textbf{pointwise periodic} if every orbit is finite.
The action $X \curvearrowleft M$ is \textbf{inverse pointwise periodic} if every inverse orbit is finite.
\end{defn}

In other words, $y \in \mathcal{O}_M(x)$ if and only if $x \in \invorb_M(y)$. If $M$ is a group then $\mathcal{O}_M(x)=\invorb_M(x)$ for every $x$.

Note that  the relation ``$x$ is in the inverse orbit of $y$'' is  transitive.
If $M$ is a monoid, than the relation is reflexive.
It is not an equivalence relation in general, when $M$ is not a group.

We begin with a simple example that show that the ``obvious'' extension of Theorem \ref{thm:pointwise_periodic_fg_expansive} fails:

\begin{example}[An infinite $\mathbb{N}$-subshift with all orbits finite]
\label{ex:AllOrbitsFinite}
Consider the semigroup $\mathbb{Z}_+$, and the $\mathbb{Z}_+$-action on the compact space 

$$ X= \left\{ x \in \{0,1\}^\mathbb{N}~:~ x_n \ge x_{n+1} ~ \forall n \in \mathbb{N} \right\}.$$
given by the shift map $\sigma:X \to X$ so that $\sigma(x)_n= x_{n+1}$.
Every element of $x$ has a finite orbit. In fact, there are two fixed points and for every $n > 1$ there is a unique element with
$\mathbb{Z}_+$ orbit  of cardinality $n$.
\end{example}


From Theorem \ref{thm:pointwise_periodic_fg_expansive}, a finitely generated expansive semigroup of continuous maps acting pointwise periodically and expansively  on an infinite compact space cannot consist of homeomorphisms. For instance, in Example \ref{ex:AllOrbitsFinite} above, the unique fixed point of $\sigma$ has two preimages.
Related to this fact, and also  close in spirit to the theme of this paper, is
the following  old and well-known result regarding the finiteness of forward-expansive homeomorphisms  (we attribute this result to  Schwartzman following  Artigue \cite{MR3174073}):

\begin{prop}[see Schwartzman \cite{MR2938482}, Coven-Keene \cite{MR2306210}]
\label{prop:PosExp}
Let $X$ be a  compact metric space and $\phi : X \to X$ a homeomorphism. If  the $\ZZ_+$-action generated by $\phi$ is expansive, then $X$ is finite.
\end{prop}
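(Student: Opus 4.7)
The plan is to reduce the proposition to a combinatorial finiteness statement about invertible one-sided subshifts.

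First, I would invoke Ma\~n\'e's theorem that any compact metric space admitting a positively expansive map has topological dimension zero, so $X$ is totally disconnected. Combined with the standard representation of expansive actions on zero-dimensional compact spaces, $(X,\phi)$ is topologically conjugate to a one-sided subshift $(Y,\sigma)$ with $Y \subseteq A^{\ZZ_+}$ for some finite alphabet $A$.

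Next, I would exploit that $\sigma : Y \to Y$ is a homeomorphism. Bijectivity means every $y \in Y$ has a unique preimage in $Y$, necessarily of the form $(a(y), y_0, y_1, \ldots)$ for a unique letter $a(y) \in A$. Continuity of $\sigma^{-1}$ makes $y \mapsto a(y)$ continuous, and since $A$ is discrete, compactness of $Y$ forces $a(y) = f(y_0, \ldots, y_k)$ for some fixed $k \ge 0$ and some $f : A^{k+1} \to A$. The identity $\sigma^{-1} \circ \sigma = \mathrm{id}_Y$ then yields the recurrence $y_n = f(y_{n+1}, \ldots, y_{n+k+1})$ for all $y \in Y$ and $n \ge 0$.

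The core combinatorial step is to identify $Y$ with a subset of the right-infinite paths in the finite directed graph $G$ with vertex set $A^{k+1}$ and edges $(a_0, a_1, \ldots, a_k) \to (a_1, \ldots, a_k, b)$ whenever $f(a_1, \ldots, a_k, b) = a_0$. By construction every vertex of $G$ has in-degree exactly one, since $f$ uniquely determines the predecessor window. In any finite directed graph with in-degree at most one, every right-infinite path lies entirely on a simple cycle: if a path $v_0, v_1, \ldots$ visits some vertex $v$ at times $i_1 < i_2$, then iterating uniqueness of predecessors gives $v_{i_2 - j} = v_{i_1 - j}$ for all $0 \le j \le i_1$, so the initial segment $v_0, \ldots, v_{i_1}$ already lies on the cycle traced by $v_{i_1}, \ldots, v_{i_2}$. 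Hence the total number of right-infinite paths is at most the number of cycle vertices, which is bounded by $|A|^{k+1}$, giving $|Y| < \infty$ and therefore $|X| < \infty$.

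The main obstacle I expect is the appeal to Ma\~n\'e's dimension theorem, which is nontrivial in itself. A more direct self-contained alternative would combine uniform continuity of $\phi^{-1}$ with a Bowen-type compactness argument showing that forward orbits which are $\epsilon$-close on a sufficiently long window force the backward orbits to be $\epsilon$-close as well, and then deduce finiteness by a finite covering argument in the spirit of Lemma \ref{lem:exp_finite}; however, such a direct argument is substantially more technical to carry out cleanly than the subshift reduction above.
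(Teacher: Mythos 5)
The paper does not actually prove Proposition \ref{prop:PosExp}; it is quoted with attributions to Schwartzman and Coven--Keene, so there is no internal argument to compare against. Judged on its own, your proposal has a genuine gap at its very first step. The theorem you attribute to Ma\~{n}\'{e} does not exist in the form you need: Ma\~{n}\'{e}'s theorem says that a compact metric space admitting an \emph{expansive homeomorphism} has \emph{finite} topological dimension, and the claim that positive expansiveness forces topological dimension zero is false --- the doubling map $z \mapsto z^2$ on the circle is positively expansive, and $S^1$ is one-dimensional. For positively expansive \emph{homeomorphisms} the space is indeed zero-dimensional, but only because it is finite, i.e.\ as a consequence of the very proposition you are trying to prove; there is no independent route to ``$X$ is totally disconnected'' here, so the reduction is circular.

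Everything after that first step is correct and rather clean: once $(X,\phi)$ is conjugate to a one-sided subshift $(Y,\sigma)$ on which the shift is invertible, the compactness argument producing the sliding-block inverse $y_n = f(y_{n+1},\dots,y_{n+k+1})$ is right, your de Bruijn-type graph does have in-degree exactly one at every vertex, and the backward-uniqueness argument correctly bounds the number of right-infinite paths, hence $|Y| \le |A|^{k+1}$. But this machinery never gets off the ground without zero-dimensionality. The ``more direct'' fallback you mention --- uniform expansiveness together with uniform continuity of $\phi^{-1}$ propagating $\epsilon$-closeness backwards, followed by a covering argument in the spirit of Lemma \ref{lem:exp_finite} --- is essentially the standard proof and is what would actually need to be written out; as it stands you have only gestured at it, so the proposal does not yet constitute a proof.
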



We note that the analogous  statement with $\NN^2$ and $\ZZ^2$ fails: Let $\ZZ^2$ act on $\ZZ \cup \{\infty\}$ by $(m,n) \cdot a = a+m-n$. Then the subaction of the semigroup $\NN^2$ is expansive.

It turns out that the behavior we saw in Example~\ref{ex:AllOrbitsFinite} is impossible in the `inverse direction', leading to the following generalization of Theorem~\ref{thm:pointwise_periodic_fg_expansive} to semigroups:

\begin{thm}\label{thm:finite_inverse_orbits}
Let  $M$ be a finitely generated semigroup and $X \curvearrowleft M$ an expansive action of $M$.
 Either $X$ is finite or there exists a point $x \in X$ with an infinite inverse orbit.
In other words, if $X \curvearrowleft M$ is inverse pointwise periodic, it is periodic.
\end{thm}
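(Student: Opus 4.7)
The plan is to reduce to the group case (Theorem \ref{thm:pointwise_periodic_fg_expansive}) by passing to an $M$-invariant subset of $X$ on which the semigroup acts by bijections, and then to leverage expansiveness to transfer the finiteness back to all of $X$.

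First I would consider the eventual image $Y = \bigcap_{m \in M} X \cdot m$ (or, if needed for non-abelian $M$, the maximal closed $M$-invariant subset of $X$ on which each generator acts surjectively, which exists by Zorn). For each $m \in M$ let $\phi_m: X \to X$ denote the map $x \mapsto x \cdot m$. Finite inverse orbits combined with a pigeonhole argument force every $y \in Y$ to be ``$M$-periodic'': any infinite backward trajectory in $Y$ (which exists because $Y \subseteq X \cdot m$ for every $m$) must cycle inside the finite set $\invorb_M(y)$. This makes each $\phi_s|_Y$ a bijection and allows the $M$-action on $Y$ to extend to an action of the finitely generated group $G = \langle \phi_s, \phi_s^{-1} : s \in S \rangle \subseteq \mathrm{Homeo}(Y)$. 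The $G$-action inherits expansiveness from $X$, and since the $M$-orbit of each $y \in Y$ is a finite set preserved by every $\phi_s^{\pm 1}$ (the inverse of a bijection on a finite invariant set is again a bijection of that set), the $G$-orbit of $y$ coincides with its finite $M$-orbit. Theorem \ref{thm:pointwise_periodic_fg_expansive} then yields that $Y$ is finite.

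To finish, I would use expansiveness to show that every $x \in X$ satisfies $x \cdot m \in Y$ for some $m \in M$. Since $Y$ is the eventual image and now known to be finite, the forward orbit of $x$ accumulates on $Y$, and a shadowing argument in the spirit of Proposition \ref{prop:PosExp} (upgrading ``approach to a periodic orbit'' to ``landing on a periodic orbit'' via expansiveness) supplies such an $m$. It follows that $X \subseteq \bigcup_{y \in Y} \invorb_M(y)$, a finite union of finite sets, so $X$ is finite. I expect the main technical difficulty to lie in the first step: ensuring that the eventual image $Y$ is genuinely $M$-invariant with bijective $M$-action when $M$ is non-abelian, and verifying that forward $M$-orbits on $Y$ are finite, as needed for the $G$-action to be pointwise periodic. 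A fallback, should this route stall, would be to develop a semigroup analogue of the horoball machinery of Section \ref{sec:horoballs} and adapt the proof of Theorem \ref{thm:pointwise_periodic_fg_expansive} directly.
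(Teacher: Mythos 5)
There is a genuine gap, and it sits exactly where you suspected: the construction of $Y$. For a non-abelian (e.g.\ free) finitely generated semigroup the family $\{X \cdot m\}_{m \in M}$ is not directed under inclusion, so $\bigcap_{m} X\cdot m$ need not be $M$-invariant or nonempty, and the Zorn alternative can genuinely produce the empty set: take $X = \{0,1\}\times(\{0\}\cup\{1/n : n\ge 1\})$ with two generators $a(i,t)=(0,\tau(t))$, $b(i,t)=(1,\tau(t))$, where $\tau(1/n)=1/(n+1)$, $\tau(0)=0$. Here $X$ is infinite and compact, every inverse orbit is finite, yet no nonempty closed invariant set has both generators surjective (surjectivity of $a$ forces $Y\subseteq\{0\}\times(\cdot)$, of $b$ forces $Y\subseteq\{1\}\times(\cdot)$). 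This action is of course not expansive --- it cannot be, by the theorem --- but it shows that your first step already requires expansiveness to produce a usable $Y$, whereas your outline defers expansiveness entirely to the last step. The same example defeats the last step as stated: even when a reasonable ``eventual image'' is nonempty and finite (here $\{0,1\}\times\{0\}$), forward orbits only \emph{accumulate} on it and never land in it, so $X \subseteq \bigcup_{y\in Y}\invorb_M(y)$ fails. The proposed upgrade via ``shadowing in the spirit of Proposition~\ref{prop:PosExp}'' is not routine: that proposition is about homeomorphisms and its proof leans on invertibility, and converting ``orbits accumulate on a finite set'' into ``$X$ is finite'' under expansiveness is essentially the entire content of the theorem. (Two smaller soft spots, both fixable: injectivity of the generators on $Y$ does not come for free from surjectivity --- it follows from finite inverse orbits via a pigeonhole/periodicity argument --- and finiteness of \emph{forward} $M$-orbits on $Y$, which you need for pointwise periodicity of the group action, must be derived from finiteness of \emph{inverse} orbits by an induction showing $y\cdot M \subseteq \invorb_M(y)\cup\{y\}$ once every point is periodic under each generator.)

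The paper's proof goes in the opposite direction and avoids all of this: it never reduces to the group case (indeed it derives Theorem~\ref{thm:pointwise_periodic_fg_expansive} as a corollary of this theorem). Assuming $X$ infinite, it uses compactness to produce, for each $n$, a pair $(x_n,y_n)$ that stays $\epsilon/2$-close under all words of length up to $n-N$ but is $\epsilon$-separated by some word $\tilde m_n$ of length about $n$; a diagonal (Arzel\`a--Ascoli) limit along a common infinite suffix of the words $\tilde m_n$ then yields an infinite backward chain $x^{(j+1)}\cdot s_j = x^{(j)}$ of genuinely distinct pairs, exhibiting a point with infinite inverse orbit directly. If you want to salvage your route, I would recommend your own fallback instead; as written, the argument does not go through.
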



In the following let $S \subset M$ be a finite set that generates  $M$ as a semigroup, in the sense that $M = \bigcup_{n=1}^\infty S^n$.
For $n \in \mathbb{N}$, we will denote
\begin{equation}
S^{ \le n} := \bigcup_{k=1}^n S^k.
\end{equation}

This notation is introduced only because we do not assume $M$ has an identity element, so we can't add $\{1\}$ to $S$. We define $S^{< n} = S^{\leq(n-1)}$.

We record the following simple observation about expansive actions of semigroups:

\begin{lem}\label{lem:exp_delta_N_epsilon}
Let $\epsilon>0$ be an expansive constant for $X  \curvearrowleft M$.  Then for every $\delta >0$ there exists $N >0$ so that
$\max \{ d(x \cdot m, y\cdot m) ~:~ m \in  S^{< N}\} \leq \epsilon$ implies $d(x,y) < \delta$.
\end{lem}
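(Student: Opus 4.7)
My plan is to prove this by a standard compactness argument, by contradiction.

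Suppose the conclusion fails for some $\delta > 0$. Then for every $N \geq 1$ there exist points $x_N, y_N \in X$ with
$$\max\{d(x_N \cdot m, y_N \cdot m) : m \in S^{<N}\} \leq \epsilon \quad \text{but} \quad d(x_N, y_N) \geq \delta.$$
Using compactness of $X \times X$, I would pass to a convergent subsequence $(x_{N_k}, y_{N_k}) \to (x,y)$. The strict inequality $d(x,y) \geq \delta > 0$ persists in the limit, so $x \neq y$.

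Next, I would invoke the hypothesis that $\epsilon$ is an expansive constant for the action $X \curvearrowleft M$: since $x \neq y$, there is some $m_0 \in M$ with $d(x \cdot m_0, y \cdot m_0) > \epsilon$. Because $S$ generates $M$, we have $m_0 \in S^{k}$ for some $k \geq 1$, hence $m_0 \in S^{<N}$ for all $N > k$. By construction, $d(x_{N_k} \cdot m_0, y_{N_k} \cdot m_0) \leq \epsilon$ for all sufficiently large $k$. Using continuity of the map $z \mapsto z \cdot m_0$ and of the metric, passing to the limit yields $d(x \cdot m_0, y \cdot m_0) \leq \epsilon$, contradicting the choice of $m_0$.

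There is essentially no obstacle here: the only thing to be a bit careful about is that $S^{<N}$ does not contain an identity (since $M$ is merely a semigroup), so one cannot directly derive a contradiction from $d(x,y) \geq \delta$ alone — the contradiction has to come from applying the expansiveness hypothesis to the distinct limit points $x,y$ and an element $m_0$ of $M$, which is eventually captured by the sets $S^{<N}$.
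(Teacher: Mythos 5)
Your proof is correct and takes essentially the same approach as the paper: the paper phrases the identical compactness argument directly, covering the compact set $K=\{(x,y): d(x,y)\ge\delta\}$ by the increasing open sets $U_n=\{(x,y): \exists m\in S^{<n},\ d(x\cdot m,y\cdot m)>\epsilon\}$ (which cover $K$ by expansiveness), whereas you run it sequentially by contradiction. The two are routine reformulations of one another, both resting on compactness of $X\times X$, continuity of the maps $z\mapsto z\cdot m$, and the expansiveness hypothesis.
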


\begin{proof}
The set $K := \{(x,y) \in X \times X~:~ d(x,y) \ge \delta\}$ is compact.
Let $$U_n := \{(x,y) \in X \times X~:~ \exists m \in  S^{< n} : d(x\cdot m,y \cdot m) > \epsilon\}.$$
Obviously  $U_n \subseteq U_{n+1}$ for every $n$.
Because $\epsilon>0$ is an expansive constant for $X  \curvearrowleft M$, it follows that  $\{ U_n~:~ n > 1\}$ is an open cover of $K$. So by compactness $K \subseteq U_N$ for some $N \ge 0$. It follows that $U_N^c \subseteq K^c$.
\end{proof}

\begin{lem}\label{lem:proximal_fg_geo}
Suppose $X$ is infinite and the action $X \curvearrowleft M$ is expansive.
Then for every expansive constant $\epsilon>0$ there exist  $N \in \mathbb{N}$ and  sequences $(x_n,y_n) \in X \times X$, and $\tilde m_n \in S^n$ so that:
\begin{enumerate}
\item $d(x_n\cdot m,y_n \cdot m) \le \epsilon/2$ for all $m \in S^{\le (n-N)}$,
\item $d( x_n \cdot \tilde m_n, y_n \cdot \tilde m_n) \ge  \epsilon$.
\end{enumerate}
\end{lem}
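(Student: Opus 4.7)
The plan is to produce the desired sequences from pairs of distinct points in $X$ that are nearly identical in the metric $d$. Such pairs exist because $X$ is infinite and compact, hence has an accumulation point. Intuitively, the closer such a pair is in $X$, the longer it takes for the semigroup action to pull them apart by more than $\epsilon$, and the elements of $S^k$ realizing a first separation will supply the desired $\tilde m_n$'s.

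First I would apply Lemma~\ref{lem:exp_delta_N_epsilon} with $\delta = \epsilon/2$ to fix an integer $N$ such that $d(x \cdot m, y \cdot m) \le \epsilon$ for all $m \in S^{<N}$ already forces $d(x,y) < \epsilon/2$. For any distinct pair $(x, y) \in X\times X$, set
$$ k(x,y) := \min\{\, k \ge 1 \,:\, \exists\, m \in S^k \text{ with } d(x \cdot m, y \cdot m) > \epsilon \,\}, $$
which is finite by expansiveness. For each fixed $k$ the map $(x,y) \mapsto \max_{m \in S^{\le k}} d(x \cdot m, y \cdot m)$ is a finite maximum of continuous functions, hence continuous, and it vanishes on the diagonal; therefore $k(x,y) \to \infty$ whenever $(x, y)$ approaches the diagonal through distinct pairs. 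Starting from distinct pairs $(x^{(i)}, y^{(i)})$ with $d(x^{(i)}, y^{(i)}) \to 0$ (available because $X$ is infinite and compact), I would set $n_i := k(x^{(i)}, y^{(i)})$ and pass to a strictly increasing subsequence, along which the claimed sequence $(x_n, y_n)$ will be indexed.

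Defining $(x_{n_i}, y_{n_i}) := (x^{(i)}, y^{(i)})$ and taking $\tilde m_{n_i} \in S^{n_i}$ to be a witness for $k(x^{(i)}, y^{(i)}) = n_i$ gives $d(x_{n_i} \cdot \tilde m_{n_i}, y_{n_i} \cdot \tilde m_{n_i}) > \epsilon$, which is condition~(2). For condition~(1), given any $m_0 \in S^{\le n_i - N}$ (relevant only when $n_i > N$) and any $m \in S^{<N}$, the arithmetic of word lengths places $m_0 m$ in $S^{<n_i}$, so by minimality of $n_i$ we have $d(x^{(i)} \cdot (m_0 m), y^{(i)} \cdot (m_0 m)) \le \epsilon$. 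Applying Lemma~\ref{lem:exp_delta_N_epsilon} to the shifted pair $(x^{(i)} \cdot m_0, y^{(i)} \cdot m_0)$ then yields $d(x^{(i)} \cdot m_0, y^{(i)} \cdot m_0) < \epsilon/2$, as required.

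The hard part is really just the careful bookkeeping of exponents: one must check that composing $m_0 \in S^{\le n - N}$ with an arbitrary $m \in S^{<N}$ indeed lands in $S^{<n}$, so that the minimality-of-$n_i$ bound applies. Once that is in place, the lemma follows from the combination of compactness, continuity of the action on finitely many generators at a time, and the uniform continuity input of Lemma~\ref{lem:exp_delta_N_epsilon}.
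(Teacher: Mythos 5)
Your argument is essentially the paper's: both proofs rest on (i) producing, via compactness, distinct pairs that remain $\epsilon$-close under longer and longer initial segments of the semigroup, (ii) taking a first separating word, and (iii) invoking Lemma~\ref{lem:exp_delta_N_epsilon} with $\delta=\epsilon/2$ to upgrade ``$\le\epsilon$ on $S^{<n}$'' to ``$<\epsilon/2$ on $S^{\le(n-N)}$'' exactly as you do in your word-length bookkeeping, which is correct. The one point where you fall short of the literal statement is the indexing: your construction yields the pairs $(x_n,y_n)$ and witnesses $\tilde m_n\in S^n$ only for $n$ ranging over the (infinite, increasing) set of first-separation times $n_i=k(x^{(i)},y^{(i)})$, whereas the lemma asserts them for every $n$. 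The paper gets every $n$ by a small normalization you omit: for each $n$ it picks a pair not $(\epsilon,S^{<n})$-separated, takes a separating word $m_n$ of minimal length $\ell\ge n$, writes $m_n=a\,b$ with $b\in S^n$, and replaces $(x_n,y_n)$ by $(x_n\cdot a,\,y_n\cdot a)$ so that the separating word has length exactly $n$ (minimality of $\ell$ guarantees the shifted pair is still $\epsilon$-close under $S^{<n}$). This is a cosmetic rather than substantive difference --- the downstream application in Theorem~\ref{thm:finite_inverse_orbits} immediately passes to a subsequence anyway --- but if you want to prove the lemma as stated you should either add that shifting step or restate the conclusion as holding along an infinite increasing sequence of $n$.
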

\begin{proof}
Let $\epsilon>0$ be an expansive constant for $X  \curvearrowleft M$, and $n \in \mathbb{N}$.
Because  $S \subset M$ is finite and
$X$ is compact, it follows that  $X$ is not $(\epsilon,S^{< n})$-separated, so there exist distinct points $x_n,y_n \in X$ so that $d(x_n\cdot m,y_n \cdot m) \leq \epsilon$ for all $m \in S^{ < n}$.
Because $\epsilon>0$ is an expansive constant for $X  \curvearrowleft M$ and $x_n \ne y_n$  there exists $m_n \in M$ so that $d(x_n \cdot m_n,y_n \cdot m_n) \geq \epsilon$. Taking $m_n$ of minimal word length with respect to $S$, we can assume without loss of generality that $m_n \in S^n$, maybe after replacing $(x_n,y_n)$ by $(x_n \cdot m ,y_n \cdot m)$ for a suitable $m \in M$.
Using Lemma \ref{lem:exp_delta_N_epsilon} with $\delta = \epsilon/2$, it follows that $d(x_n\cdot m,y_n \cdot m) \le \epsilon/2$ for all $m \in S^{n-N}$.
This completes the proof.

\end{proof}

\begin{proof}[Proof of Theorem \ref{thm:finite_inverse_orbits}]
Suppose $X$ is infinite and $\epsilon >0$ is an expansive constant. Let  $N \in \mathbb{N}$, $\tilde m_n \in S^n$ and  $(x_n,y_n) \in X \times X$  be as in Lemma \ref{lem:proximal_fg_geo}.

We can write $\tilde m_n \in S^n$ as $\tilde m_n = s_n^{(n)}\ldots s_1^{(n)}$. Because $S$ is finite we can find
$s_1,\ldots,s_n\ldots \in S$
so that for every $k \in \mathbb{N}$ there are infinitely many $n$'s with
$s_j^{(n)}=s_j$ for all $1 \le j \le k$.
Thus, there is a
subsequence $n_k$ so that $s_{j}^{(n_{k+1})}=s_{j}^{(n_{k})}$ for all $j < k$.
To avoid subscripts, we replace  the subsequences $(x_{n_k},y_{n_k})$ and $\tilde m_{n_k}$ by $(x_n,y_n)$ and $\tilde m_n$ respectively. So at this point we have  a sequence $s_1,\ldots,s_n,\ldots \in S$, integers $L_1 < L_2< \ldots <\ldots$, $(x_n,y_n) \in X \times X$ so that  $d(x_n\cdot m,y_n \cdot m) \le \epsilon/2$ for all $m \in S^{\le (L_n-N)}$ and $m_n \in S^{L_n}$ with $d( x_n \cdot \tilde m_n, y_n \cdot \tilde m_n) \ge  \epsilon$,
and
$$ m_ns_{1}^{-1} \ldots s_n^{-1} \in S^{\le  L_n -n}.$$
By compactness of $X$, using  an Arzela-Ascoli argument we find  another subsequence $(n_k)_{k=1}^\infty$ so that the following limits exist for all $j \in \mathbb{N}$:
$$x^{(j)} := \lim_{k \to \infty} x_{n_k} \cdot m_{n_k} \cdot s_{1}^{-1}\ldots \cdot s_{j-1}^{-1} \mbox{ for all } j \in \mathbb{N}.$$
$$y^{(j)} := \lim_{k \to \infty} y_{n_k} \cdot m_{n_k} \cdot s_{1}^{-1}\ldots \cdot s_{j-1}^{-1} \mbox{ for all } j \in \mathbb{N}.$$

We have  $x^{(j+1)} \cdot s_j = x^{(j)}$ and $y^{(j+1)} \cdot s_j = y^{(j)}$ for every $j$, that $d(x^{(1)},y^{(1)}) \ge \epsilon$ and that for every $j >N$ and $m \in S^{j-N}$ $d(x^{(j)}\cdot m,y^{(j)}\cdot m) \le \epsilon/2$.
This proves that the set $\{ (x^{(j)},y^{(j)})\}$ is infinite, so the pair $(x^{(1)},y^{(1)})$ has an infinite inverse orbit under the action $X\times X  \curvearrowleft M$.
Because the inverse orbit of $(x^{(1)},y^{(1)})$ is contained in the product of the inverse orbits of $x^{(1)}$ and $y^{(1)}$,  either $x^{(1)}$ or $y^{(1)}$ has an infinite inverse orbit.
\end{proof}

\section{Pointwise periodic automorphism groups of expansive actions}

Let $X$ be a compact metric space and consider two commuting faithful actions on $X$: A left group action $G \curvearrowright X$ and a right semigroup action $X \curvearrowleft M$. In this section, we study the situation where one of the actions is expansive and the other is pointwise periodic. One can of course interpret $X$ as a $G$-system and $M$ as a subaction of its endomorphism monoid, or interpret $X$ as an $M$-system and $G$ as a subaction of its automorphism group. We prove the following theorems.

\begin{thm}
\label{thm:M_is_finite}
Let $G \curvearrowright X \curvearrowleft M$ be commuting actions on a  compact metric space, where $G$ is a  group and $M$ is a finitely generated semigroup. If $G \curvearrowright X$ is expansive and $X \curvearrowleft M$ is pointwise periodic and faithful, then $M$ is finite.
\end{thm}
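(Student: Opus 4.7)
The plan is to use $G$-expansiveness to make $\End(X,G)$ a discrete subset of $C(X,X)$ in the uniform metric, and then combine this with the pointwise periodicity of $M$ to force $M$ to be finite.

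Let $\epsilon > 0$ be an expansive constant for $G \curvearrowright X$. I would first observe that distinct elements $\phi,\psi \in \End(X,G)$ are at uniform distance $> \epsilon$: if $\sup_{x} d(\phi(x),\psi(x)) \le \epsilon$, then $G$-equivariance gives $d(g\phi(x),g\psi(x)) = d(\phi(gx),\psi(gx)) \le \epsilon$ for every $x,g$, and applying expansiveness at each pair $(\phi(x),\psi(x))$ forces $\phi = \psi$. Since the $M$-action is faithful and commutes with $G$, the map $m \mapsto f_m$ with $f_m(x) = x \cdot m$ embeds $M$ into $\End(X,G)$ as a uniformly $\epsilon$-separated subset.

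Assume for contradiction that $M$ is infinite, and let $S$ be a finite generating set. The sets $W_N := \{x \in X : xM \subseteq x \cdot S^{\le N}\} = \bigcap_{m \in M} \bigcup_{m' \in S^{\le N}} \{x : xm = xm'\}$ are closed and cover $X$ by pointwise periodicity, so by Baire's theorem some $W_{N_0}$ has non-empty interior $V$, on which $|yM| \le K := |S^{\le N_0}|$. Because $G$ and $M$ commute, the function $y \mapsto |yM|$ is $G$-invariant, so the bound holds on the open $G$-invariant set $G \cdot V$. When $G \curvearrowright X$ is minimal, $G \cdot V = X$ and the bound holds throughout $X$; in general, extending the bound to all of $X$ is the main obstacle and requires iterating the Baire argument inside residual $G$-invariant closed subsystems, using the $\epsilon$-gap on $\End(X,G)$ to ensure termination.

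With a uniform orbit bound $|yM| \le K$ in hand, each $f_m$ is eventually periodic on every orbit (index $\le K$, period dividing $K!$), hence $f_m^{K+K!} = f_m^K$ identically in $\End(X,G)$. To finish I would produce finitely many points $x_1,\ldots,x_k \in X$ for which the restriction map $m \mapsto (f_m|_{x_i M})_i \in \prod_i \mathrm{Maps}(x_i M,\,x_i M)$ is injective on $M$, giving a finite embedding of $M$ and contradicting infinitude. The existence of such $x_i$'s is transparent in the minimal case (a single $M$-orbit suffices, since each separating set $T_{m,m'} := \{y : ym \ne ym'\}$ is open, $G$-invariant and non-empty, hence equal to $X$); in general I would argue via a compactness/covering argument patterned on Lemma~\ref{lem:finite_coding_subset}, or alternatively reduce the whole problem to Theorem~\ref{thm:finite_inverse_orbits} applied to a derived expansive $M$-action on a suitable compact quotient or on the pointwise closure $\overline{M} \subseteq X^X$, whose finiteness then yields $M$ finite via faithfulness.
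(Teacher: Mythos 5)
Your opening observation is correct and is exactly the right lever: since the $M$-action commutes with the expansive $G$-action, two elements of $M$ that induce different maps on $X$ induce maps at uniform distance $>\epsilon$, so $M$ embeds (by faithfulness) as a uniformly $\epsilon$-separated family of $G$-commuting continuous self-maps. This is the same mechanism the paper uses (compare Proposition~\ref{prop:aut_equicont}, and the step in the paper's proof where $d(g(x\cdot m),g(x\cdot m'))\le\epsilon/2$ for all $g$ forces $x\cdot m=x\cdot m'$). However, the argument has a genuine gap precisely at the point you flag as ``the main obstacle''. The Baire step only gives an orbit bound on a nonempty open $G$-invariant set $G\cdot V$; the proposed iteration over $G$-invariant closed remainders comes with no termination argument, and even if it exhausted $X$ in countably many stages it would produce pieces with bounds $K_1,K_2,\ldots$ that need not be uniformly bounded --- and ``pointwise periodic implies uniformly bounded orbits'' is essentially the whole content of the theorem, not a preliminary reduction. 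Separately, even granting a uniform bound $K$, your concluding step is only executed in the minimal case (where closed $G$-invariant sets such as $\{y: ym=ym'\}$ are trivially $\emptyset$ or $X$); the two suggested general substitutes do not work as stated: the uniform closure of $\{f_m\}$ \emph{is} $\{f_m\}$ because an $\epsilon$-separated set is uniformly closed, so its compactness is equivalent to the finiteness being proved, while the pointwise closure in $X^X$ is not a compact \emph{metric} space (and its elements need not be continuous), so Theorem~\ref{thm:finite_inverse_orbits} does not apply to it.

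The paper's proof is structured to avoid ever needing a uniform orbit bound up front. Lemma~\ref{lem:bounded_orbits_imply} builds, from finitely many model $M$-actions on sets of size at most $k+1$, a canonical retraction $m\mapsto [m]^{k}$ of $S^{N_k}$ into $S^{<N_k}$ that is exact on every orbit of size $\le k$; this makes ``having a large orbit'' detectable by finitely many closed metric conditions (the sets $C_j$ and $D_j$). Expansiveness of $G$ upgrades $d(x\cdot m,x\cdot[m]^{k})\le\epsilon/2$ along a $G$-orbit to the identity $x\cdot m=x\cdot[m]^{k}$, whence Lemma~\ref{lem:implies_bounded_orbits} gives a genuine orbit bound; a nested compactness argument over translates of the $C_i$ then manufactures a single point with infinite $M$-orbit unless some $C_j$ is empty, in which case faithfulness yields $M=\{f_m: m\in S^{<N_{k_j}}\}$ finite. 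To rescue your outline you would need an analogue of this local-to-global mechanism; the minimal case you handle does not indicate how to supply it.
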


Every $G$-subshift is expansive. Thus, Theorem \ref{thm:M_is_finite} above implies Theorem \ref{thm:pointwise_periodic_CA_monoid}.
 Under the additional assumptions that $G$ is finitely generated and the space $X$ is  totally disconnected,  we can replace the assumptions on $G$ and $M$ as follows:

\begin{thm}
\label{thm:G_is_finite}
Let $G \curvearrowright X \curvearrowleft M$ be commuting actions on a totally disconnected compact  metric space, where $M$ is a semigroup and $G$ is a finitely generated group. If $X \curvearrowleft M$ is expansive,  and  $G \curvearrowright X$ is  pointwise periodic and faithful, then $G$ is finite.
\end{thm}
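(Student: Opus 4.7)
The strategy is to first show that the $G$-action on $X$ is expansive, then apply Theorem~\ref{thm:pointwise_periodic_fg_expansive} to deduce that $X$ is finite, and finally use the faithfulness of $G \curvearrowright X$ to embed $G$ into the symmetric group $\mathrm{Sym}(X)$, yielding $|G| < \infty$.

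Total disconnectedness of $X$ and $M$-expansiveness together give a finite clopen partition $\mathcal{P}$ of $X$ with pieces of diameter smaller than the $M$-expansive constant $\epsilon_M$; the map $\pi : X \to \mathcal{P}^M$, $\pi(x)(m) = \alpha(x \cdot m)$, where $\alpha(z)$ is the piece of $\mathcal{P}$ containing $z$, is then a topological embedding. For each finite $F \subseteq M$ (say generated by powers of a fixed finite generating set $S_M$ of $M$), the refined clopen partition $\mathcal{P}_F = \bigvee_{m \in F} m^{-1}\mathcal{P}$ is again finite, and its block function $\alpha_F$ records the $F$-window of the $M$-coding.

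Assume for contradiction that $G \curvearrowright X$ is not expansive. For each finite $F \subseteq M$, choose $\epsilon_F$ smaller than the minimum inter-block distance of $\mathcal{P}_F$, and apply Proposition~\ref{prop:epsilon_code_horoball} (with a fixed finite generating set $S$ of $G$) to obtain an $S$-horoball $H_F \subseteq G$ and a distinct pair $x_F \neq y_F$ with $d(g \cdot x_F, g \cdot y_F) \le \epsilon_F$, and hence $\alpha_F(g \cdot x_F) = \alpha_F(g \cdot y_F)$, for every $g \in H_F$. By pointwise periodicity the joint stabilizer $K_F := \stab(x_F) \cap \stab(y_F)$ has finite index in $G$, so the map $g \mapsto (g \cdot x_F, g \cdot y_F)$ factors through the finite coset space $G/K_F$. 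By Lemma~\ref{lem:horoball_contains_ball}, $H_F$ contains arbitrarily large $S$-balls, which, once they exceed the Schreier diameter of $G/K_F$, surject onto $G/K_F$. Choosing a representative in $H_F$ for each coset of $K_F$, the equality $\alpha_F(g \cdot x_F) = \alpha_F(g \cdot y_F)$ propagates from $H_F$ to all $g \in G$; the commutation $g \cdot (z \cdot m) = (g \cdot z) \cdot m$ then rewrites this as $\alpha(g \cdot x_F \cdot m) = \alpha(g \cdot y_F \cdot m)$ for every $g \in G$ and $m \in F$.

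Taking $F_n \uparrow M$ and extracting, by compactness of $X \times X$, a convergent subsequence $(x_{F_n}, y_{F_n}) \to (x^*, y^*)$, the limit satisfies $\alpha(x^* \cdot m) = \alpha(y^* \cdot m)$ for every $m \in M$, i.e., $\pi(x^*) = \pi(y^*)$, forcing $x^* = y^*$ by injectivity of $\pi$. The main obstacle is ensuring that $(x^*, y^*)$ lies off the diagonal, so that this collapse yields an actual contradiction: one must choose the pairs $(x_F, y_F)$ so that $d(x_F, y_F)$ remains bounded below uniformly in $F$. I expect this to follow by exploiting that horoballs $H_F$ avoid $1_G$ (so the horoball condition does not constrain $d(x_F, y_F) = d(1 \cdot x_F, 1 \cdot y_F)$), and by replacing $(x_F, y_F)$, if needed, by a translate $(h \cdot x_F, h \cdot y_F)$ realizing the supremum $\sup_g d(g \cdot x_F, g \cdot y_F)$, which is positive since $x_F \neq y_F$. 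Once a non-collapsing limit is secured, $\pi(x^*) = \pi(y^*)$ with $x^* \neq y^*$ contradicts $M$-expansiveness, so $G \curvearrowright X$ is expansive and the conclusion follows from Theorem~\ref{thm:pointwise_periodic_fg_expansive}.
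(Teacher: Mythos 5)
Your overall architecture cannot work: you aim to show that $G \curvearrowright X$ is expansive and hence (via Theorem~\ref{thm:pointwise_periodic_fg_expansive}) that $X$ is finite, but under the hypotheses of Theorem~\ref{thm:G_is_finite} the space $X$ need not be finite and the $G$-action need not be expansive. For a concrete counterexample to your intermediate claim, take $X = \{0,1\}^{\mathbb{Z}}$ with $M = \mathbb{Z}$ acting by the shift (expansive) and $G = \mathbb{Z}/2\mathbb{Z}$ acting faithfully by the symbol-exchange automorphism: the actions commute, $G$ is finitely generated, pointwise periodic and faithful, yet $X$ is infinite and $G \curvearrowright X$ is not expansive (a finite group acting expansively forces $X$ finite, by Lemma~\ref{lem:exp_finite}). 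The theorem's conclusion is only that $G$ is finite, which is strictly weaker than what your plan would establish.

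The place where your argument visibly self-destructs is the propagation step. Having found a coset representative of each class of $G/K_F$ inside a large ball contained in $H_F$, you conclude $\alpha_F(g\cdot x_F)=\alpha_F(g\cdot y_F)$, and in fact $d(g\cdot x_F,g\cdot y_F)\le\epsilon_F$, for \emph{every} $g\in G$ --- including $g=1$. So $d(x_F,y_F)\le\epsilon_F\to 0$, and the limit pair $(x^*,y^*)$ necessarily lands on the diagonal. Your proposed remedies cannot help: the fact that $1\notin H_F$ is irrelevant once the coset argument has propagated the bound to all of $G$, and passing to the translate realizing $\sup_g d(g\cdot x_F,g\cdot y_F)$ is useless because that supremum is itself at most $\epsilon_F$. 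There is no contradiction with $M$-expansiveness to be had; for each finite $F$ the $M$-element separating $x_F$ from $y_F$ simply escapes $F$.

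What your horoball computation actually proves is that (for the right $\epsilon$, chosen using total disconnectedness so that $d\le\epsilon$ iff $d<\epsilon$) every $S$-horoball $\epsilon$-codes $G$; by Lemma~\ref{lem:coding_ball} some finite ball $S^r$ then $\epsilon$-codes $G$, which gives \emph{equicontinuity} of $G\curvearrowright X$, not expansiveness (this is Proposition~\ref{prop:equicontinuous_group_finite_orbit}). The paper then finishes differently: an equicontinuous action admits a finite set $F\subseteq G$ with every $g\in G$ uniformly $\epsilon$-close to some $f\in F$ (Lemma~\ref{lem:equi_cont_finite_dense}); commuting with the expansive $M$-action forces $g$ and $f$ to agree as maps, and faithfulness gives $G=F$ finite (Proposition~\ref{prop:aut_equicont}). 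You should redirect your (essentially correct) horoball propagation toward equicontinuity and adopt this second half in place of the appeal to Theorem~\ref{thm:pointwise_periodic_fg_expansive}.
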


Every expansive $M$-action on a totally disconnected compact metric space is isomorphic to an $M$-subshift. Thus, an equivalent formulation of Theorem \ref{thm:G_is_finite} is the following: For every $M$-subshift $X$, every infinite finitely generated subgroup of $\Aut(X)$ has an infinite orbit.

Before proving these results, we begin with an example showing that it is important that one of the two actions is by a group.

\begin{remark}
It is reasonable to ask which of  the assumptions in the above theorems can be relaxed, and in particular if there is a common generalization.
The pointwise periodic $\NN$-subshift with orbits of arbitrary size from  Example~\ref{ex:AllOrbitsFinite} shows that for two commuting actions of finitely generated semigroups, having one   expansive and the other pointwise periodic does not imply periodicity of one of the actions.

Example \ref{example:pp_not_periodic_not_fg} above shows that the assumption about $G$ being finitely generated is essential  in Theorem \ref{thm:G_is_finite} (we can take $M=G$ because the group is abelian).
We do not know if the assumption that $X$ is totally disconnected in Theorem \ref{thm:G_is_finite} is essential or an artifact of our method of proof.
\end{remark}
\subsection{Proof of Theorem~\ref{thm:G_is_finite}}


For completeness, we recall the notion of equicontinuous families of maps, along with some standard lemmas:
\begin{defn}
A family $F$ of maps on a compact metric space $X$ is \emph{equicontinuous at $x \in X$} if $\forall \epsilon: \exists \delta: \forall y: d(x,y) < \delta \implies \forall g \in G: d(gx,gy) < \epsilon$ and \emph{equicontinuous} if this holds for all $x \in X$. An action $G \curvearrowright X$ is \emph{equicontinuous} if the family of maps $\{y \mapsto g \cdot y:~ g \in G\}$ is an equicontinuous family.
\end{defn}

By compactness, an equicontinuous family $F$ is uniformly equicontinuous, in the sense that
\[ \forall \epsilon: \exists \delta: d(x,y) < \delta \implies \forall f \in F: d(f(x),f(y)) < \epsilon. \]

\begin{lem}
\label{lem:equi_cont_finite_dense}
Let $G \curvearrowright X$ be an equicontinuous action. Then there for every $\epsilon > 0$ there exists a finite set $F \subseteq G$ such that
\begin{equation}\label{eq:F_approx_equicont}
 \forall g \in G: \exists f \in F: \forall x \in X: d(g(x), f(x)) < \epsilon
\end{equation}
\end{lem}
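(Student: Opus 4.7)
The plan is to invoke an Arzel\`a--Ascoli-type argument: the family $\mathcal{F}=\{y\mapsto g(y):g\in G\}$, viewed inside $C(X,X)$ with the uniform metric $d_\infty(f,h)=\sup_{x\in X}d(f(x),h(x))$, is equicontinuous (by hypothesis) and takes values in the compact space $X$, hence is relatively compact in $C(X,X)$, hence totally bounded. A finite $\epsilon$-net for $\mathcal{F}$ in the uniform metric is exactly what we need, except its elements may not lie in $\mathcal{F}$ itself; but each net element can be approximated by an element of $\mathcal{F}$, so a standard halving of $\epsilon$ produces the required $F\subseteq G$.

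Rather than quoting Arzel\`a--Ascoli, I would give a short hands-on proof that exposes the mechanism. First, by compactness of $X$, equicontinuity upgrades to uniform equicontinuity: pick $\delta>0$ such that $d(x,y)<\delta$ implies $d(g(x),g(y))<\epsilon/3$ for every $g\in G$. Next, cover $X$ with finitely many open balls $B(x_1,\delta),\ldots,B(x_k,\delta)$ of radius $\delta$. Consider the map $\Phi:G\to X^k$ defined by $\Phi(g)=(g(x_1),\ldots,g(x_k))$. Since $X^k$ is compact, it is totally bounded, so we can choose a finite set $F\subseteq G$ such that for every $g\in G$ there exists $f\in F$ with $d(g(x_i),f(x_i))<\epsilon/3$ for all $1\le i\le k$.

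To verify that this $F$ satisfies \eqref{eq:F_approx_equicont}, fix $g\in G$ and the corresponding $f\in F$. Given any $x\in X$, choose $x_i$ with $d(x,x_i)<\delta$. By the triangle inequality and uniform equicontinuity applied to both $g$ and $f$,
\[
d(g(x),f(x))\le d(g(x),g(x_i))+d(g(x_i),f(x_i))+d(f(x_i),f(x))<\tfrac{\epsilon}{3}+\tfrac{\epsilon}{3}+\tfrac{\epsilon}{3}=\epsilon.
\]
Since $x\in X$ was arbitrary, this gives $\sup_{x\in X}d(g(x),f(x))<\epsilon$, as desired.

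The argument is essentially routine; the only point that requires care is the passage from ``approximation at finitely many sample points $x_1,\ldots,x_k$'' to ``uniform approximation on $X$'', which is exactly where uniform equicontinuity is used and is the reason the sample set $\{x_1,\ldots,x_k\}$ must be a $\delta$-net with $\delta$ chosen \emph{before} selecting $F$. No significant obstacle is expected.
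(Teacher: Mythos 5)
Your proof is correct and follows essentially the same route as the paper's: upgrade to uniform equicontinuity, fix a finite $\delta$-net $\{x_1,\dots,x_k\}$ in $X$, sort the elements of $G$ into finitely many classes according to their behaviour on the net, pick one representative per class, and finish with the three-term triangle inequality. The only cosmetic difference is that the paper indexes the classes by maps $\phi:X_0\to X_0$ (discretizing the target values into the net as well), whereas you use total boundedness of $X^k$ to extract the representatives directly; both yield the same finite set $F$.
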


\begin{proof}
Choose $\epsilon >0$. Because $X$ is compact and  the action of $G$ is  equicontinuous, it follows that it is uniformly equicontinuous. So there exists $\delta \in (0, \epsilon/4)$ such that $d(x,y) < \delta$ implies $d(g(x),g(y)) < \frac{1}{4}\epsilon$ for all $x,y \in X$ and $g \in G$. 
By
compactness, there exists a  finite $\delta$-spanning set, namely a finite set $X_0  \subseteq X$ such that $X = \bigcup_{x \in X_0}B_{\delta}(x)$.
For any $\phi:X_0 \to X_0$ let
$$G_\phi:= \bigcap_{x \in X_0}\left\{ g \in G~:~  d(g(x),\phi(x)) < \epsilon/4 \right\}.$$
$G_\phi$ is the (possibly empty) subset of $G$ consisting of elements whose action is well approximated by $\phi$ on the finite set $X_0$.
We claim that  if $g_1,g_2 \in G_\phi$ for some $\phi : X_0 \to X_0$, then $\sup_{x \in X}d(g_1(x),g_2(x)) < \epsilon$.
Indeed, for any $x \in X$ choose $y \in X_0$ so that $d(x,y) < \delta$. Then $d(g_i(x),g_i(y)) < \frac{1}{4}\epsilon$, and
$d(g_i(y),\phi(y)) < \frac{1}{4}\epsilon$.
Using the triangle inequality, it follows that
\[ d(g_1x,g_2x) < d(g_1x,g_1y) + d(g_1y,\phi(y)) + d(\phi(y),g_2y) + d(g_2y,g_2x) < \epsilon. \]

Let us check  that $G = \bigcup_{ \phi:{X_0} \to {X_0}} G_{\phi}$. Indeed,
fix $g \in G$. For every $x_0 \in X_0$ choose $\phi(x_0) \in X_0$  such that $d(g(x_0),\phi(x_0)) < \delta$. Then $g \in G_\phi$ for the function $\phi:X_0 \to X_0$ thus constructed.

Note that the union in $\bigcup_{ \phi:{X_0} \to {X_0}} G_{\phi}$ is over a finite set, because the set  $X_0$ is finite.
For every $\phi:X_0 \to X_0$ such that $G_\phi$ is non-empty choose $g_\phi \in G_\phi$, and let $$F := \{ g_\phi : \phi:X_0 \to X_0,~ G_\phi \ne \emptyset\}.$$
It follows that $F$ is a finite set (as $|F| \le |X_0|^{|X_0|})$ and that $F$ satisfies \eqref{eq:F_approx_equicont}.
\end{proof}

Equicontinuity  is an opposite condition to expansiveness
in the sense that both are satisfied only when acting on a finite discrete space $X$.

\begin{prop}\label{prop:aut_equicont}
If an action of a  group $G \curvearrowright X$ is faithful and equicontinuous  and commutes with an expansive action $X \curvearrowleft M$ of a  semigroup $M$,
then $G$ is finite.
\end{prop}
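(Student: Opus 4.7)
The plan is to combine expansiveness of $X \curvearrowleft M$ with the approximation property for equicontinuous actions (Lemma~\ref{lem:equi_cont_finite_dense}) to inject $G$ into a finite set. The key bridge is that two elements of $G$ which act uniformly $\epsilon$-close on $X$ must coincide, where $\epsilon$ is an expansive constant for $M$.

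First I would fix an expansive constant $\epsilon_0 > 0$ for the action $X \curvearrowleft M$. The main observation is then the following: if $g_1, g_2 \in G$ satisfy $\sup_{x \in X} d(g_1(x), g_2(x)) \le \epsilon_0$, then $g_1 = g_2$ as transformations of $X$. To see this, suppose toward contradiction that $g_1(x_0) \ne g_2(x_0)$ for some $x_0 \in X$. By expansiveness of $X \curvearrowleft M$ there exists $m \in M$ with $d(g_1(x_0) \cdot m,\; g_2(x_0) \cdot m) > \epsilon_0$. Using the assumption that $G \curvearrowright X$ and $X \curvearrowleft M$ commute, $g_i(x_0) \cdot m = g_i(x_0 \cdot m)$, so $d(g_1(y), g_2(y)) > \epsilon_0$ for $y := x_0 \cdot m$, contradicting the hypothesis. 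Faithfulness of $G \curvearrowright X$ then upgrades the equality of maps to an equality in $G$.

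Next I would apply Lemma~\ref{lem:equi_cont_finite_dense} with the parameter $\epsilon_0/2$ to obtain a finite set $F \subseteq G$ such that for every $g \in G$ there is some $f_g \in F$ with $\sup_{x \in X} d(g(x), f_g(x)) < \epsilon_0/2$. Fix a choice of $f_g$ for each $g \in G$, yielding a map $\Phi : G \to F$, $\Phi(g) = f_g$. If $\Phi(g_1) = \Phi(g_2) = f$, then by the triangle inequality
\[
\sup_{x \in X} d(g_1(x), g_2(x)) \le \sup_{x \in X} d(g_1(x), f(x)) + \sup_{x \in X} d(f(x), g_2(x)) < \epsilon_0,
\]
so by the previous paragraph $g_1 = g_2$. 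Thus $\Phi$ is injective, and $|G| \le |F| < \infty$.

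The main content of the argument is the observation in the second paragraph; once commutativity is used to transport the expansive-separation witness from $X$ to $g_i(X)$, everything else is routine application of Lemma~\ref{lem:equi_cont_finite_dense}. I do not anticipate any real obstacle, as both ingredients — finitely many equicontinuous approximate classes and the $M$-expansive rigidity of commuting homeomorphisms — are already in place.
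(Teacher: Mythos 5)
Your proposal is correct and follows essentially the same route as the paper: apply Lemma~\ref{lem:equi_cont_finite_dense} to get a finite approximating set $F$, then use commutativity to transport the expansive constant of $M$ and conclude that uniformly $\epsilon$-close elements of $G$ coincide. The only cosmetic difference is that you run the lemma at $\epsilon_0/2$ and deduce injectivity of $g \mapsto f_g$ via the triangle inequality, whereas the paper runs it at $\epsilon_0$ and concludes directly that $G \subseteq F$; both are valid.
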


\begin{proof}
Let $G,M$ and $X$ be as above, and let $\epsilon > 0$ be an expansive constant for the action of $M$, then
by Lemma \ref{lem:equi_cont_finite_dense} there exists a finite set $F \subset G$ so that for all $g \in G$ there exists $f \in F$ such that $d(gx,fx)<\epsilon$ for all $ x\in X$.
Since $G$ commutes with $M$,
\[ d((gx)m,(fx)m) = d(g(xm),f(xm)) < \epsilon \]
for all $m \in M$ which implies $gx = fx$ for all $x$ by the expansivity of $M$. Because the action is faithful, $g \in F$.
\end{proof}

\begin{lem}
\label{lem:horoball_equicont}
Let $G \curvearrowright X$ be an action by a finitely generated group on a compact metric space. Suppose for every  $\epsilon_1 > 0$, there exists $\epsilon \in (0,\epsilon_1)$ so that
every horoball $\epsilon$-codes $G$, then the action is equicontinuous.
\end{lem}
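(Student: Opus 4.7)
The plan is to combine the hypothesis with Lemma \ref{lem:coding_ball} to reduce the infinite family of maps $\{g : g \in G\}$ to a \emph{finite} family whose equicontinuity follows for free, and then use the coding property to push equicontinuity of the finite family to the whole group.

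First, I would dispose of the trivial case. If $G$ is finite, then $\{y \mapsto g \cdot y : g \in G\}$ is a finite family of continuous maps on the compact metric space $X$; each such map is uniformly continuous, and a finite family of uniformly continuous maps is equicontinuous. So assume from now on that $G$ is infinite, which puts us in the setting where Lemma \ref{lem:coding_ball} applies.

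Next, fix $\epsilon_1 > 0$ and invoke the hypothesis to obtain some $\epsilon \in (0,\epsilon_1)$ such that every $S$-horoball $\epsilon$-codes $G$. By Lemma \ref{lem:coding_ball}, there exists $r > 0$ with $S^r \vdash_\epsilon G$. Now $S^r$ is finite, so by uniform continuity of each of the finitely many maps $y \mapsto h \cdot y$ for $h \in S^r$, there is a $\delta > 0$ such that $d(x,y) < \delta$ implies
\[
\max_{h \in S^r} d(h(x), h(y)) < \epsilon,
\]
which in particular puts $(x,y) \in \Delta_\epsilon(S^r)$ as defined in \eqref{eq:Delta_A_eps}. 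The coding property $S^r \vdash_\epsilon G$ then yields
\[
d(g(x), g(y)) < \epsilon < \epsilon_1 \quad \text{for every } g \in G.
\]
Since $\epsilon_1 > 0$ was arbitrary and the choice of $\delta$ is independent of $x$ and $y$, this proves that the action is (uniformly) equicontinuous, which is what we wanted.

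I do not anticipate any serious obstacle here: the statement is essentially a direct translation between the combinatorial "coding" language of the previous lemmas and the classical notion of equicontinuity, once one observes that $S^r \vdash_\epsilon G$ turns a modulus of continuity valid on the \emph{finite} set $S^r$ into a modulus valid on all of $G$. The only mild subtlety to keep in mind is that $\Delta_\epsilon(S^r)$ is defined with a non-strict inequality while the coding conclusion gives a strict inequality, so one should arrange $d(h(x),h(y)) < \epsilon$ on $S^r$ (strict) to ensure the pair lies in $\Delta_\epsilon(S^r)$ and to retain the strict inequality $\epsilon < \epsilon_1$ at the end.
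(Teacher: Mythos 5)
Your proof is correct and follows essentially the same route as the paper's: invoke Lemma \ref{lem:coding_ball} to get a finite $\epsilon$-coding set, use uniform continuity of the finitely many corresponding maps to get a modulus $\delta$, and let the coding property propagate it to all of $G$. The only differences are cosmetic — you handle the finite-$G$ case explicitly and fuss over the strict versus non-strict inequality, both of which are harmless and arguably slightly more careful than the paper's version.
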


\begin{proof}
Let $\epsilon_1 > 0$ be arbitrary. Choose $\epsilon \in (0,\epsilon_1)$ so that
every horoball $\epsilon$-codes $G$. By Lemma~\ref{lem:coding_ball}, there is a finite set $F \Subset G$ such that $F$ $\epsilon$-codes $G$. Because $F$ is finite, there exists $\delta > 0$ such that $d(x,y) < \delta \implies d(fx,fy) \leq \epsilon$ for all $f \in F$. Since $F$ $\epsilon$-codes $G$, we have that $d(x,y) < \delta \implies d(gx,gy) < \epsilon < \epsilon_1$ for all $g \in G$. Thus, $G$ is equicontinuous.
\end{proof}

\begin{prop}\label{prop:equicontinuous_group_finite_orbit}
Let $G$ be a finitely generated group and let $G \curvearrowright X$ be an action on a compact totally disconnected metric space. If $G$ is pointwise periodic, the action is  equicontinuous.
\end{prop}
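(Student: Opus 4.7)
The plan is to reduce the equicontinuity of $G \curvearrowright X$ to Theorem~\ref{thm:pointwise_periodic_fg_expansive} by factoring the action through a $G$-subshift built from a suitable clopen partition of $X$.

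First I would fix $\epsilon_1 > 0$ and use total disconnectedness of $X$ together with compactness to pick a clopen partition $\mathcal{P} = \{U_1, \ldots, U_k\}$ with $\max_i \diam(U_i) < \epsilon_1$. Then I would define the map $\pi : X \to \{1, \ldots, k\}^G$ by $\pi(x)_g = i \iff g^{-1}(x) \in U_i$. This $\pi$ is continuous because $\mathcal{P}$ is clopen, and a short computation using the shift convention $\sigma^h(y)_g = y_{h^{-1}g}$ shows $\pi(hx) = \sigma^h \pi(x)$, so $Y := \pi(X)$ is a closed, shift-invariant subset of $\{1, \ldots, k\}^G$, i.e.~a $G$-subshift. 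In particular the action $G \curvearrowright Y$ is expansive.

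Next I would observe that $G \curvearrowright Y$ inherits pointwise periodicity from $G \curvearrowright X$: by equivariance, the orbit of $\pi(x)$ is the $\pi$-image of the orbit of $x$, which is finite by hypothesis. Since $G$ is finitely generated, Theorem~\ref{thm:pointwise_periodic_fg_expansive} then forces $Y$ to be finite, say $Y = \{y_1, \ldots, y_m\}$. Setting $X_j := \pi^{-1}(y_j)$ gives a clopen partition $X = X_1 \sqcup \cdots \sqcup X_m$, and by construction, for every $x \in X_j$ and every $g \in G$ the point $g^{-1}(x)$ lies in the \emph{single} cell $U_{(y_j)_g}$, determined only by $j$ and $g$.

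Finally I would extract equicontinuity. For any $x, x' \in X_j$ and any $g \in G$, both $g^{-1}(x)$ and $g^{-1}(x')$ belong to $U_{(y_j)_g}$, so $d(g^{-1}(x), g^{-1}(x')) \le \diam(U_{(y_j)_g}) < \epsilon_1$; since $G$ is a group, substituting $g^{-1}$ for $g$ yields $d(g(x), g(x')) < \epsilon_1$ for all $g \in G$ whenever $x, x'$ lie in a common $X_j$. Taking $\delta := \min_{j \ne j'} d(X_j, X_{j'})$, which is positive because we have finitely many disjoint compact sets, any pair with $d(x,x') < \delta$ lies in a common $X_j$ and therefore satisfies $d(g(x), g(x')) < \epsilon_1$ uniformly in $g \in G$. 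I do not anticipate a serious obstacle: the key move is that total disconnectedness lets us package the $G$-action into an expansive pointwise periodic $G$-subshift factor, where Theorem~\ref{thm:pointwise_periodic_fg_expansive} does all the work.
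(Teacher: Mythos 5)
Your proof is correct, but it takes a genuinely different route from the paper's. The paper does not pass through a symbolic factor at all: it invokes Lemma~\ref{lem:horoball_equicont} (every horoball $\epsilon$-codes $G$ implies equicontinuity) and then essentially re-runs the horoball argument from the proof of Theorem~\ref{thm:pointwise_periodic_fg_expansive}, using total disconnectedness only to choose a metric in which no two points are at distance exactly $\epsilon$, so that the non-strict bound $d(g(x),g(y)) \le \epsilon$ obtained from pointwise periodicity and Lemma~\ref{lem:horoball_contains_ball} upgrades to the strict inequality required by the definition of $\epsilon$-coding. You instead use total disconnectedness to build an arbitrarily fine clopen partition, push the action forward to a $G$-subshift factor $Y$, observe that $Y$ is expansive and pointwise periodic, and apply Theorem~\ref{thm:pointwise_periodic_fg_expansive} as a black box to get $|Y| < \infty$, from which equicontinuity falls out by reading off the cells $\pi^{-1}(y_j)$. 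Your argument is more modular -- it exhibits the proposition as a formal corollary of Theorem~\ref{thm:pointwise_periodic_fg_expansive} plus the existence of fine clopen partitions, and avoids repeating the coding/horoball machinery; the paper's version stays inside the coding framework (which it needs anyway for Lemma~\ref{lem:horoball_equicont} and Lemma~\ref{lem:coding_ball}) and makes explicit the stronger intermediate fact that every horoball $\epsilon$-codes $G$. All the steps in your reduction check out: $\pi$ is continuous and equivariant because the cells are clopen, pointwise periodicity passes to the factor, each fiber $\pi^{-1}(y_j)$ is clopen and nonempty, and the minimum distance between distinct fibers gives the required $\delta$.
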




\begin{proof}
The proof below is very similar to that of Theorem~\ref{thm:pointwise_periodic_fg_expansive} above, but the assumption that the action is expansive is replaced by the assumption that $X$ is totally disconnected.

Let $G \curvearrowright X$ be as above.
By Lemma~\ref{lem:horoball_equicont}  it suffices to prove  that for any $\epsilon_1 > 0$ there exists $\epsilon \in (0,\epsilon_1)$ so that  every horoball $H \subset G$ $\epsilon$-codes $G$.

Because $X$ is a compact totally disconnected metric space it is homeomorphic to a subspace of the Cantor set. Thus (by passing to an equivalent metric) we can assume that for every  $\epsilon_1 > 0$, there exists $\epsilon \in (0,\epsilon_1)$ so there no pair of points in $X$ are at distance exactly $\epsilon$ from each other. For such $\epsilon$ we have $d(x,y) \le \epsilon$ if and only if $d(x,y) < \epsilon$.

Fix $\epsilon_1 > 0$, and let $\epsilon \in (0,\epsilon_1)$ be as above.
Fix a  horoball $H$ and $x,y \in X$ such that $d(g(x),g(y)) \le \epsilon$ for all $g \in H$.

Because the action is pointwise periodic, the stabilizers of $x$ and $y$ are subgroups of finite index in $G$, and thus their common stabilizer contains a  normal finite index subgroup of $G$. Thus, there exists $N > 0$ so that for any $g, h \in G$ there exists some $\tilde g \in S^N$ so that $g(x)=\tilde g h(x)$ and $g(y)=\tilde g h(y)$.
By Lemma \ref{lem:horoball_contains_ball} there exists some $h \in G$ so that $ S^Nh \subset H$.
Fix $g \in G$. By the choice of $N$, there exists $\tilde g \in S^N$  so that so
 $g(x)=\tilde gh(x)$ and $g(y)=\tilde g h(y)$.
Because $\tilde g h\in H$, $d(\tilde g h(x),\tilde g h(y)) \le \epsilon$, so $d(g(x),g(y)) \le \epsilon$. By the choice of $\epsilon$, this implies that  $d(g(x),g(y)) < \epsilon$.
We conclude that $H$ $\epsilon$-codes $G$.
\end{proof}

The following example shows it is not possible to simply remove the assumption that $X$ is totally disconnected from Proposition \ref{prop:equicontinuous_group_finite_orbit}:
\begin{example}[A non-equicontinuous pointwise periodic homeomorphism of a compact space]

Consider the following compact subset of the complex plane:
$$X= \{z \in \mathbb{C} ~:~ |z|=1\} \cup \bigcup_{n=1}^\infty\left\{ (1-\frac{1}{n})e^{2\pi \frac{j}{n}} ~:~ j =1,\ldots, n\right\} \subset \mathbb{C}$$
Let $T:X \to X$ be given by
$$T\left((1-\frac{1}{n})e^{2\pi \frac{j}{n}}\right) = (1-\frac{1}{n})e^{2\pi \frac{j+1}{n}}$$
and
$$T(z)=z \mbox{ if } |z|=1$$
It is follows that $T$ is a homeomorphism of $X$ that is pointwise periodic, but it is not periodic or even equicontinuous.
\end{example}

The proof of Theorem~\ref{thm:G_is_finite} is a direct combination of the lemmas above.

\begin{proof}[Proof of Theorem~\ref{thm:G_is_finite}]
Our assumptions are that $G \curvearrowright X \curvearrowleft M$ are commuting actions on a compact totally disconnected metric space $X$, where $G$ is a group and $M$ is a semigroup, $X \curvearrowleft M$ is expansive and $G$ is finitely generated, $G \curvearrowright X$ pointwise periodic and faithful. By Proposition~~\ref{prop:equicontinuous_group_finite_orbit}, the action of $G$ is equicontinuous. By Proposition~\ref{prop:aut_equicont}, $G$ is finite.
\end{proof}

\subsection{Proof of Theorem~\ref{thm:M_is_finite}}

We begin with two lemmas about actions of finitely generated semigroups. In these lemmas there is no assumption about continuity of the action (nor any topology on the phase space):
\begin{lem}\label{lem:implies_bounded_orbits}
Suppose a semigroup $M$ is generated by a finite set $S \subset M$. Let $X \curvearrowleft M$ be a right $M$-action on a set $X$. Suppose that for some $x \in X$ there exists $k > 1$ so that for every $m \in S^k$ there exists
$\tilde m \in S^{ < k}$
so that $x \cdot m = x \cdot \tilde m$. Then $x \cdot M = x \cdot S^{ < k}$.
\end{lem}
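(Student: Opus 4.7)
The plan is to prove by strong induction on $n \ge 1$ the inclusion $x \cdot S^n \subseteq x \cdot S^{<k}$. Together with the generation $M = \bigcup_{n \ge 1} S^n$, this yields $x \cdot M \subseteq x \cdot S^{<k}$, and the reverse inclusion is immediate because $S^{<k} \subset M$.

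For $n < k$ the statement is trivial, since $S^n \subseteq S^{<k}$. For the inductive step, fix $n \ge k$ and an element $m \in S^n$, and write $m = m_1 m_2 \cdots m_n$ with each $m_i \in S$. Apply the hypothesis to the length-$k$ \emph{prefix} $m_1 \cdots m_k \in S^k$ to obtain $\tilde m \in S^{<k}$, say $\tilde m = t_1 \cdots t_\ell$ with $1 \le \ell \le k-1$, satisfying $x \cdot (m_1 \cdots m_k) = x \cdot \tilde m$. Using the right-action identity $(x \cdot a) \cdot b = x \cdot (ab)$ we compute
\[
x \cdot m = (x \cdot m_1 \cdots m_k) \cdot (m_{k+1} \cdots m_n) = x \cdot (t_1 \cdots t_\ell \, m_{k+1} \cdots m_n),
\]
and the word on the right lies in $S^{\ell + n - k}$ where $1 \le \ell + n - k \le n - 1$. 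The strong inductive hypothesis then gives $x \cdot m \in x \cdot S^{<k}$, completing the induction.

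The only thing to check is the bookkeeping in the inductive step: the hypothesis provides information only about words of length exactly $k$, but applying it to a length-$k$ prefix of any longer word strictly reduces the total length by at least one letter (since $\ell \le k-1$). There is no substantive obstacle beyond this combinatorial observation; the argument uses nothing beyond the associativity of the right action and the finite generation of $M$ by $S$.
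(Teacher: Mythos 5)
Your proof is correct and follows essentially the same route as the paper's: an induction on word length that uses the hypothesis to rewrite a length-$k$ subword into a strictly shorter one (the paper peels off the last generator and re-applies the hypothesis when the length climbs back to $k$, while you apply it to the length-$k$ prefix under strong induction, but this is only a bookkeeping difference). The only cosmetic wrinkle is the case $n=k$, where your displayed factorization has an empty suffix; there the conclusion is immediate from the hypothesis itself.
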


\begin{proof}
Let $X \curvearrowleft M$, $S \subset M$, $x \in X$ and  $k >1$ be as above.
We will show by induction on $ n \ge k$ that for every $x \in X$ and $m \in S^n$ there exist $j < k$ and $\tilde m \in S^{j}$ so that $x \cdot m  =x \cdot \tilde m$. For $n= k$, this is the assumption in the statement of the lemma.
Now suppose the claim holds for $n$, and let $m \in S^{n+1}$. Write $m = m_1s$ with $m_1 \in S^{n}$ and $s \in S$. Then by the induction hypothesis there exist $j < k$, $\tilde m_1 \in S^j$ and $x  m_1 = x \tilde m_1$. Then $x \cdot m = x \cdot (m_1 s) = x \cdot (\tilde m_1 s)$. Let $\tilde m =  \tilde m_1 \cdot s$, then $\tilde m \in S^{j+1}$, and $j+1 \le k$. If $j+1 < k$ - we are done, otherwise $j+1 = k$ and by the assumption we can replace $\tilde m$ be some element of $S^{< k}$.
\end{proof}

\newcommand{\smallen}[2]{[#1]^{#2}}

\begin{lem}\label{lem:bounded_orbits_imply}
For every $k \in \mathbb{N}$ there exist $N_k \in \mathbb{N}$ with the following property: Suppose the semigroup $M$ is generated by a finite set $S$, $|S| \leq k$. Then there exists a map $m \mapsto \smallen{m}{k} : S^{N_k} \to S^{< N_k}$ with the following property: Whenever $Y \curvearrowleft M$ and the $M$-orbit of $y \in Y$ has cardinality at most $k$, then $y \cdot \smallen{m}{k} = y \cdot m$.
\end{lem}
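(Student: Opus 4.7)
The plan is to reformulate the lemma in terms of right congruences on the free semigroup $S^+$ and then apply a uniform pigeonhole. For each right action $Y \curvearrowleft M$ and each $y \in Y$, the relation $w \sim_y w'$ iff $y \cdot w = y \cdot w'$ (with $S^+$ acting through the canonical homomorphism $S^+ \to M$) is a right congruence on $S^+$, and the hypothesis $|y \cdot M| \le k$ translates precisely into $\sim_y$ having at most $k$ equivalence classes. Thus it suffices to construct a single map $\phi : S^{N_k} \to S^{<N_k}$ satisfying $w \sim_\tau \phi(w)$ for every right congruence $\tau$ on $S^+$ with at most $k$ classes.

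The first step is the observation that the collection $\mathcal{T}_k$ of such congruences is \emph{finite}, with cardinality bounded in terms of $k$ alone. Each $\tau \in \mathcal{T}_k$ is determined by a finite set $K$ of classes (with $|K|\le k$), transition functions $\delta_s : K \to K$ for $s \in S$, and an initial assignment $\iota : S \to K$ recording the class of each generator; using $|S| \le k$, this yields a crude bound $|\mathcal{T}_k| \le k^{O(k^2)}$. Setting $\sim^{*} := \bigcap_{\tau \in \mathcal{T}_k} \tau$, I would verify that this is itself a right congruence on $S^+$, and that its number of classes is at most $L := k^{|\mathcal{T}_k|}$, since the $\sim^{*}$-class of $w$ is determined by the tuple of its $\tau$-classes as $\tau$ varies over $\mathcal{T}_k$.

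With $N_k := L+1$, the map $\phi$ is built from pigeonhole on prefixes. Given $w = s_1 s_2 \cdots s_{N_k}$, since $N_k > L$, among the prefixes $s_1 \cdots s_i$, $i=1,\ldots,N_k$, two must share a $\sim^{*}$-class; if these are indexed by $1 \le i < j \le N_k$, then the right-congruence property of $\sim^{*}$ gives
\[ w \;\sim^{*}\; s_1\cdots s_i \cdot s_{j+1}\cdots s_{N_k} \in S^{<N_k}. \]
I would define $\smallen{w}{k}$ to be a canonical choice of such a shorter $\sim^{*}$-equivalent word, for example the shortest and, among those, the lexicographically least. Because any $\sim_y$ with $|y\cdot M| \le k$ belongs to $\mathcal{T}_k$ and therefore contains $\sim^{*}$, this immediately yields $y \cdot \smallen{w}{k} = y \cdot w$, as required.

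The main (mild) obstacle is purely combinatorial: one must carefully verify that an intersection of right congruences is itself a right congruence and that the class count of this intersection is bounded by the product of the class counts of the factors. Both are routine, and no dynamical or topological input is needed beyond the setup of the preceding lemma.
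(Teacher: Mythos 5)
Your argument is correct. It rests on the same finiteness phenomenon as the paper's proof --- over the alphabet $S$ there are, up to isomorphism, only finitely many deterministic transition systems on at most $k+1$ points --- but the two proofs package it differently. The paper forms a single finite ``universal'' $S$-set $A$, namely the disjoint union of all actions of the generators on $\{1,\ldots,k+1\}$, observes that the increasing sequence of sets of maps $\{a \mapsto a \cdot m : m \in S^{\le j}\}$ must stabilize inside the finite set of all self-maps of $A$, takes $N_k$ to be a stabilization index, and then conjugates any orbit $Z_y = \{y\} \cup y\cdot M$ of size at most $k+1$ into a copy of $\{1,\ldots,k+1\}$ sitting in $A$. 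Your intersection $\sim^{*}$ of all right congruences with at most $k$ classes plays the role of the universal action (and your observation that $\sim_y \in \mathcal{T}_k$ is exactly the paper's conjugation step), while your pigeonhole on the prefixes of a single word replaces the stabilization argument. What your route buys is an explicit value $N_k = L+1$ together with an explicit shortening $w \mapsto s_1\cdots s_i\, s_{j+1}\cdots s_{N_k}$, whereas the paper's $N_k$ is only shown to exist; the price is the (routine, as you say, but necessary) verification that right congruences with at most $k$ classes form a finite family and that their intersection is again a right congruence with a controlled number of classes. One imprecision you share with the paper, inherited from the statement itself: both constructions really define $[m]^{k}$ on \emph{words} over $S$ rather than on elements of $S^{N_k} \subseteq M$ (two words representing the same element of $M$ could a priori be shortened differently); this is harmless, since the application in the proof of Theorem~\ref{thm:M_is_finite} only ever quantifies over words.
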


\begin{proof}
Because there are only a finite number of functions from $\{1,\ldots,k+1\}$ to itself, there are also only finitely many $k$-tuples of such functions. Enumerate these tuples as $t_1,\ldots,t_{\ell}$. Let $A$ be a disjoint union of $\ell$ copies of $\{1,\ldots,k+1\}$, and take an action of $M$ on $A$ where on the $i$th copy, the $j$th generator acts as the $j$th function in the tuple $t_i$.

There are only finitely many functions on a set of size $\ell (k+1)$, so for some $N_k$, we have that for all $m \in S^{N_k}$, there exists $\smallen{m}{k} \in S^j$ for some $j < N_k$ such that $a \cdot m = a \cdot \smallen{m}{k}$ for all $a \in A$. This is because the set of functions of the form $a \mapsto a \cdot m$ where $m \in S^{\leq j}$ cannot grow indefinitely as $j$ increases.

Now, suppose $Y \curvearrowleft M$ is an arbitrary action, and $y \cdot M$ has cardinality at most $k$. Then $Z_y = \{y\} \cup y \cdot M$ is a set of size at most $k+1$ where $M$ acts. Then by the definition of $A$, there is a bijection between $Z_y$ and a (subset of a) copy of $\{1,\ldots,k+1\}$ in $A$ which conjugates the action of $M$ on $Z_y$ to the action of $M$ on that copy of $\{1,\ldots,k+1\}$ in $A$. Since the actions of $m$ and $\smallen{m}{k}$ agree on $A$, the conjugacy implies $y \cdot m = y \cdot \smallen{m}{k}$.
\end{proof}


\begin{proof}[Proof of Theorem~\ref{thm:M_is_finite}]
Our assumptions are that $G \curvearrowright X \curvearrowleft M$ are commuting actions on a compact metric space, where $G$ is a group and $M$ is a semigroup, $G$ is expansive and $M$ is finitely generated, pointwise periodic and faithful. We need to show $M$ is finite.

Let $\epsilon >0$ be an expansive constant for the $G$-action.
Let $k \in \mathbb{N}$ be greater than the number of generators of $M$. Let $N_k > 0$ be the integer given by Lemma \ref{lem:bounded_orbits_imply} so that for every $m \in S^{\le N_k}$ there exists $\smallen{m}{k} \in S^{j}$ where $j < N_k$ and $x \cdot m = x \cdot \smallen{m}{k}$ whenever $|x \cdot M| \le k$. Note that $\smallen{m}{k}$ is only a function of $m$ and $k$.

Choose an increasing sequence of natural numbers $k_1 \le k_2 \le \ldots$ so that $k_j \ge k^{N_{k_{j-1}}}$ for every $j \in \mathbb{N}$.
For $j \in \mathbb{N}$, define sets $C_j \subset X$ and $D_j \subset X$ as follows:
\begin{equation}
C_j = \left\{x \in X~:~ \exists m \in S^{N_{k_j}}: d\left(x \cdot m,x \cdot \smallen{m}{k_j}\right) \ge \frac{\epsilon}{4} \right\}.
\end{equation}
\begin{equation}
D_j = \left\{x \in X~:~ \forall m \in S^{N_{k_j}}: d\left(x \cdot m,x \cdot \smallen{m}{k_j}\right) \le \frac{\epsilon}{2} \right\}.
\end{equation}

By Lemma \ref{lem:bounded_orbits_imply} and the choice of $N_{k_j}$ and $\smallen{m}{k_j}$, if  $x \in C_j$ then $|x \cdot M| > k_j$.

On the other hand, if $g\cdot x \in D_{j-1}$ for every $g \in G$, then for every $m \in S^{N_{k_{j-1}}}$ we have
 $$d(g \cdot (x \cdot m),g \cdot (x \cdot \smallen{m}{k_{j-1}})) = d((g\cdot x) \cdot m,(g \cdot x) \cdot \smallen{m}{k_{j-1}}) \le \frac{\epsilon}{2}$$ for all $g \in G$, thus by expansiveness of the $G$-action and the choice of $\epsilon$, $x \cdot m = x \cdot \smallen{m}{k_{j-1}}$. So
  by Lemma  \ref{lem:implies_bounded_orbits} we have $x \cdot M = x \cdot S^{< N_{k_{j-1}}}$. In particular $|x \cdot M| \le k^{N_{k_{j-1}}} < k_j$.
It follows that for every $x \in C_j$ there exists $g \in G$ so that $g \cdot x  \in D_{j-1}^c$.

It follows that $\{ gD_{j-1}^c~:~ g \in G\}$ is an open cover of $C_j$.
Because $C_j$ is compact, there exists a finite subcover. So for every $j \ge 2$ there exists a finite set $F_j \subset G$ so that $C_j \subset \bigcup_{g \in F_j} g D_{j-1}^c$.
Also note that $D_j^c \subset C_j$, so
$$C_j \subset \bigcup_{g \in F_j} g C_{j-1}.$$

It follows that for every $j \ge 2$
there exists $g_j \in F_j$ so that $C_j \cap g_j C_{j-1} \ne \emptyset$.
Thus
$$C_j \cap \left(g_{j} C_{j-1} \right)\cap \ldots \cap \left(g_{j} \ldots g_{2} C_1 \right) \ne \emptyset.$$
It follows that
$$ \bigcap_{i=1}^j g_{2}^{-1}\ldots g_{i-1}^{-1} C_i \ne \emptyset.$$

Thus, by compactness
$$\bigcap_{i=1}^\infty g_{1}^{-1}\ldots g_{i-1}^{-1} C_i \ne \emptyset.$$
Choose $x \in \bigcap_{i=1}^\infty g_{1}^{-1}\ldots g_{i-1}^{-1} C_i$. Then $|x \cdot M | > k$ for every $k \in \mathbb{N}$, so $x$ has an infinite $M$-orbit.
\end{proof}

\section{Pointwise periodic tilings in $\mathbb{R}^d$}
In this section we apply the result obtained in previous parts of this paper to  tilings actions of the Euclidean space $\mathbb{R}^d$. For an  introduction to the study of tilings of Euclidean space through the use of dynamical systems and an extensive bibliography see for instance \cite{MR2078847}. The results presented in this section are closely related to previous ones by  Barge and  Olimb \cite{MR3163024}. 

\begin{defn}
An $\mathbb{R}^d$-\textbf{tile} is a compact set $T \subset \mathbb{R}^d$ which is equal to the closure of it's interior. We assume by convention that $0$ is in the interior of each $T \in \T$.
 Given a finite set of tiles $\T$, the \textbf{tiling space} $X_\T$ is collection  of tilings of $\mathbb{R}^d$  by translates of the tiles $\T$  with pairwise disjoint interiors.  We formally think of  $X_\T$ as a subset of  $((\mathbb{R}^d)^*)^{\T}$.The set $X_\T$ consists precisely of those $x \in ((\mathbb{R}^d)^*)^{\T}$ that  satisfy the following conditions:
\begin{enumerate}
\item $ \mathbb{R}^d= \bigcup_{T \in \T}\bigcup_{v \in x_T}v+T$
\item If $v_1 \in x_{T_1}$ and $v_2 \in x_{T_2}$ then either $v_1=v_2$ and $T_1=T_2$ or $v_1+T_1$ and $v_2+T_2$ have disjoint interiors.
\end{enumerate}

 For $x \in X_T$ and $T \in \T$ we think of $x_T \in (\mathbb{R}^d)^*$ as the collection of translates of $T$ that appear in the tiling $x$.
For $x \in X_\T$ and $v \in V$ let $v+x \in X_\T$ be given by
$$(v+x)_T := \left\{ v+w:~ w \in x_T\right\}.$$
This defines an $\mathbb{R}^d$-action $\mathbb{R}^d \curvearrowright X_{\T}$.
\end{defn}

\begin{remark}
Because tiles have non-empty interior, it follows that for every $T \in \T$ there exists $\epsilon >0$ so that every $x \in X_\T$ the set $x_T \subset \mathbb{R}^d$ is $\epsilon$-separated (in particular it is discrete, thus closed). From this fact,
assuming $\T$ is finite, it follows that the set $X_{\T} \subset ((\mathbb{R}^d)^*)^{\T}$ is compact, where $(\mathbb{R}^d)^*$  is given the Fell topology as usual, and $ ((\mathbb{R}^d)^*)^{\T}$ has the product topology.
Also, since 0 is in the interior of each tile,
$x_{T_1}$ and $x_{T_2}$ are disjoint for $T_1 \neq T_2$.
\end{remark}

\begin{defn}
For  $x \in X_\T$  let $V_x= \bigcup_{T \in \T}x_T \subset \mathbb{R}^d$ and
\begin{equation}
E_x = \{ (v_1,v_2):~ \exists T_1,T_2 \in \T \mbox{ s.t. } v_1 \in x_{T_1},~  v_2 \in x_{T_2} \mbox{ and } (v_1 +T_1) \cap (v_2 + T_2) \ne \emptyset  \}.
\end{equation}
The     \textbf{intersection graph} for  a tiling $x$ is $\mathcal{G}_x:=(V_x,E_x)$. The edges $E_x$ of $\mathcal{G}_x$ correspond to pairs of tiles in $x$ with non-empty intersection.
$$E_x = \{ (v_1,v_2) ~:~ \exists T_1,T_2 \in \T \mbox{ s.t. } v_1 \in x_{T_1} ~,~  v_1 \in x_{T_1} \mbox{ and } (v_1 +T_1) \cap (v_2 + T_2) \ne \emptyset  \}.
$$
For $x \in X_\T$, and $v \in V_x$ let $T_v(x)$ denote the unique $T \in \T$ such that $v \in x_T$.

\begin{lem}
For every finite set $\T$ of $\mathbb{R}^d$-tiles and every $x \in X_\T$,  $\mathcal{G}_x$ is a connected graph.
\end{lem}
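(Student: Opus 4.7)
The plan is to derive connectedness of $\mathcal{G}_x$ from the connectedness of $\mathbb{R}^d$ together with the local finiteness of the tiling as a family of closed subsets of $\mathbb{R}^d$.

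First, fix $x \in X_\T$ and an arbitrary $v_0 \in V_x$, and let $C \subseteq V_x$ denote the connected component of $v_0$ in $\mathcal{G}_x$. The goal is to show $C = V_x$.

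Second, I would establish that the collection $\{v + T_v(x) : v \in V_x\}$ is a \emph{locally finite} family of closed subsets of $\mathbb{R}^d$. Fix a compact $K \subset \mathbb{R}^d$. Each tile $T \in \T$ is compact, so has finite diameter, and $\T$ is finite, so all diameters are uniformly bounded by some $D$. A translate $v + T_v(x)$ meets $K$ only if $v$ lies in the bounded set $K + B(0,D)$. By the remark following the definition of $X_\T$, each $x_T$ is $\epsilon$-separated for some $\epsilon > 0$ uniform in $T \in \T$, so only finitely many such $v$ exist. Hence only finitely many tiles of the tiling $x$ meet any compact set.

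Third, define
\[
 U = \bigcup_{v \in C}(v + T_v(x)), \qquad U' = \bigcup_{v \in V_x \setminus C}(v + T_v(x)).
\]
A locally finite union of closed sets is closed, so both $U$ and $U'$ are closed in $\mathbb{R}^d$. By condition (1) in the definition of $X_\T$, we have $U \cup U' = \mathbb{R}^d$.

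Finally, $\mathbb{R}^d$ is connected and $U$ is non-empty (it contains $v_0 + T_{v_0}(x)$, which has non-empty interior). If $U'$ were also non-empty, then $U \cap U' \neq \emptyset$ by connectedness. Any point in $U \cap U'$ would lie in some tile $v_1 + T_{v_1}(x)$ with $v_1 \in C$ and simultaneously in some tile $v_2 + T_{v_2}(x)$ with $v_2 \in V_x \setminus C$. This means $(v_1, v_2) \in E_x$ is an edge of $\mathcal{G}_x$, contradicting $v_2 \notin C$. Therefore $U' = \emptyset$, so $C = V_x$, and $\mathcal{G}_x$ is connected. The only non-routine step is the local finiteness observation, but it is an immediate consequence of the uniform $\epsilon$-separation and bounded diameter of tiles.
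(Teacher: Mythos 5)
Your proof is correct and follows essentially the same route as the paper: partition $\mathbb{R}^d$ into closed unions of tiles according to the components of $\mathcal{G}_x$ and invoke connectedness of $\mathbb{R}^d$, with local finiteness guaranteeing closedness of those unions. The only cosmetic difference is that you justify local finiteness via the $\epsilon$-separation of tile centers and bounded diameters, whereas the paper uses a volume-packing argument; both are fine.
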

\begin{proof}
 Because $\T$ is finite and every $T \in \T$ has non-empty interior, there exists $\epsilon >0$  and $r>0$ so that whenever $v + T$ intersects the ball of radius $r$ around $w \in \mathbb{R}^2$ it's intersection with  the ball of radius $2r$ has measure at least $\epsilon$. By the above argument, every ball in $\mathbb{R}^d$ can intersect only finitely many translates of tiles with pairwise disjoint interiors.
Fix $x \in X_\T$.
By the above argument, for every $U \subset V_X$ the set $C_U:= \bigcup_{v \in U} T_v(x) \subset \mathbb{R}$ is closed. Indeed, every converging sequence in $C_U$ is eventually contained in a ball, so it is a limit point of a finite union of tiles, and this is closed.
In particular, for every  connected component $U$ of the graph  $\mathcal{G}_x$,  $C_{U}$ is closed. If the graph $V_x$  were not connected,  $\mathbb{R}^d$ would be a non-trivial disjoint union of closed sets.
\end{proof}

The \textbf{admissible displacements vectors} for $\T$ is defined by:
\begin{equation}
E_\T := \{ v-w:~ (v,w) \in E_x~,~ x \in X\}.
\end{equation}

\end{defn}
There is a well-known notion of ``finite local complexity'' for tilings, that means that there are finitely many ways to partially  a bounded box in $\mathbb{R}^d$ up to translation:
\begin{defn}
Let $\T$ be a finite  set of $\mathbb{R}^d$-tiles.
We say that $\T$ has \textbf{ finite local complexity } if the set
$E_\T$ of admissible displacements vectors for $\T$ is finite.
\end{defn}

\begin{remark}
A set of connected tiles $\T$ has finite local complexity if and only if there are finitely many centered globally admissible $2$-patches if and only if  there are finitely many $n$-patches for every $n$. See for instance \cite{MR2078847} for a formal definition of $n$-patches.
\end{remark}

\begin{remark}
The topology on tiling spaces defined above is not the conventional \emph{tiling topology} as it appears for instance in \cite{MR2078847}. The topology used here has the desirable  feature that it is compact even if $\T$ does not have finite local complexity,  whereas the conventional tiling topology is not.
If $\T$ has finite local complexity, the topology on $X_\T$ defined above coincides with the conventional tiling topology.
\end{remark}

\begin{defn}
We say that a set $\T$  of $\mathbb{R}^d$-tiles is \textbf{totally periodic} if $\mathbb{R}^d \curvearrowright X_{\T}$ is pointwise periodic. Equivalently, every $\T$-tiling has a compact orbit.
\end{defn}

\begin{remark}\label{rem:compact_orbit_stab}
Suppose $G\curvearrowright X$, where $G$ is a locally compact metrizable group and $X$ is a compact metric space. Then for every $x_0 \in X$ the orbit $G\cdot x_0$ is homeomorphic to
$G/\stab(x_0)$ via the map $g\cdot \stab(x_0) \mapsto g(x_0)$. In particular, $x_0$ has compact orbit if and only if $\stab(x_0)$ is a cocompact subgroup in $G$. This is also equivalent to the existence of a compact set $K \subset G$ so that $Gx_0 = Kx_0$. See for instance \cite[Chapter 1, Exercise 3]{MR956049}.
\end{remark}

\begin{example}
Consider the set $\T$ consisting only of the tile in Figure \ref{fig:totally_periodic}. Up to translation, there is a unique  tiling $\mathbb{R}^d$ with this tile, obtained by centering the tiles on the points of a lattice.

\begin{figure}
\begin{tikzpicture}[scale=0.25]
\draw (-1,-1) -- (-1,0);
\draw (-1,0) -- (0,1);
\draw (0,1) -- (-1,2);
\draw (-1,2) -- (-1,3);

\draw[gray, thick] (-1,3) -- (0,3);
\draw[gray, thick] (0,3) -- (1,2.5);
\draw[gray, thick] (1,2.5) -- (2,3);

\draw[gray, thick] (-1,-1) -- (0,-1);
\draw[gray, thick] (0,-1) -- (1,-1.5);
\draw[gray, thick] (1,-1.5) -- (2,-1);

\draw[gray, thick] (2,-1) -- (2,0);
\draw[gray, thick] (2,0) -- (3,1);
\draw[gray, thick] (3,1) -- (2,2);
\draw[gray, thick] (2,2) -- (2,3);


\end{tikzpicture}
\caption{\label{fig:totally_periodic}A totally periodic $\mathbb{R}^2$-tile}
\end{figure}
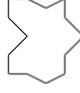
\end{example}
Our next goal is to prove that pointwise periodicity implies periodicity for tiling spaces with finite local complexity. Rather than proving the result directly the $\mathbb{R}^d$ actions, we define an auxiliary semigroup action on the space of ``centered tilings'':
\begin{defn}
Denote by  $X_\T^0$  be the collection of $\T$ tilings of $\mathbb{R}^d$ that are ``centered''. Formally:
\begin{equation}
X_\T^0 :=\bigcup_{T \in \T}\{ x \in X_\T ~:~ 0 \in x_T\}.
\end{equation}

\end{defn}

Fix a finite set $\T$ of $\mathbb{R}^d$-tiles.
For every $v \in E_\T$ define a  map
$\phi_v: X_\T^0 \to X_\T^0$ as follows:
$$x\cdot \phi_v =\begin{cases}
-v+x & \mbox { if } -v+x \in X_\T^0\\
x & \mbox{otherwise}.
\end{cases}
$$
In words, if $x$ has a tile centered at $v$, the map $\phi_v$ shifts $x \in X_\T^0$ so that this tile will be centered at the origin. Otherwise it leaves it $x$ unchanged.
\begin{lem}
If $\T$ has finite local complexity and $v \in E_\T$, the  map $\phi_v: X_\T^0 \to X_\T^0$ is continuous.
\end{lem}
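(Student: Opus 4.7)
The plan is to verify continuity of $\phi_v$ at each point $x \in X_\T^0$, splitting into two cases according to whether $v$ is a tile center of $x$. (The case $v = 0$ is trivial since $\phi_0$ is the identity, so assume $v \neq 0$.)

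If $v$ is not a tile center of $x$, so $\phi_v(x) = x$, then $v \notin x_T$ for every $T \in \T$. For each $T$, the set $\{y : v \notin y_T\}$ is a Fell-basic open set (with the compact witness $\{v\}$). Their finite intersection is an open neighborhood of $x$ on which $\phi_v$ coincides with the identity and is therefore continuous at $x$.

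If $v \in x_T$ for the unique $T \in \T$ containing $v$ (uniqueness: two tiles with $v$ as a translate would share the interior point $v$, contradicting disjointness of interiors, since $0$ lies in the interior of every tile), then $\phi_v(x) = -v + x$. The plan is to produce an open neighborhood of $x$ in $X_\T^0$ on which $v$ remains a tile center of type $T$; on such a neighborhood $\phi_v$ acts as translation by $-v$, which is continuous coordinatewise in the Fell topology. The essential input is the following finite local complexity statement: for every $R > 0$, the set
\[ V_R := \{w \in \mathbb{R}^d : |w| \leq R \text{ and } w \in y_{T'} \text{ for some } T' \in \T \text{ and some } y \in X_\T^0 \} \]
is finite. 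Granted this, pick $R > |v|$ and $\delta > 0$ with $B_\delta(v) \cap V_R = \{v\}$, and set $U := \{y \in X_\T^0 : y_T \cap B_\delta(v) \neq \emptyset\}$. Then $U$ is a Fell-open neighborhood of $x$, and any $y \in U$ has a tile center of type $T$ in $B_\delta(v) \cap V_R = \{v\}$, forcing $v \in y_T$, as required.

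The main obstacle is proving $V_R$ is finite. For this, take any tile center $w \in B_R(0)$ of $y \in X_\T^0$, and cover the segment from $0$ to $w$ by tiles of $y$: because tiles have centers with a uniform separation and bounded diameter, only finitely many tiles of $y$ meet this segment, with the number bounded uniformly in $y$ by some $K_R$ depending only on $R$, the maximum tile diameter, and the separation constant. These tiles give a chain in $\mathcal{G}_y$ from $0$ to $w$ of length at most $K_R$, and each edge carries a displacement in the finite set $E_\T$, so $w$ lies in the finite set of sums of at most $K_R$ elements of $E_\T$. Without finite local complexity this fails: tile centers of $x_n$ could approach $v$ along distinct positions without ever equalling $v$, and the piecewise description of $\phi_v$ would jump discontinuously between translation and identity.
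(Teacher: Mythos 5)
Your proof is correct and follows essentially the same route as the paper's (which is explicitly only a sketch): split on whether $-v+x \in X_\T^0$ and find a Fell-open neighborhood of $x$ on which $\phi_v$ is globally the identity or globally translation by $-v$. The only substantive addition is your justification that finite local complexity makes the set $V_R$ of possible tile-center positions in a bounded region finite (via bounded chains in $\mathcal{G}_y$ with displacements in the finite set $E_\T$), which is exactly the detail the paper's phrase ``from the fact that $E_\T$ is finite it follows'' leaves to the reader.
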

\begin{proof}
We give a sketch of the proof.
Assume $\T$ has finite local complexity, so $E_\T$ is finite.
Fix $x \in X_\T^0$.  If $-v+x \in X_\T^0$, from the fact that $E_\T$ is finite it follows that there exists an open neighborhood $U \subset X_T^0$ of $x$ so that
 $-v+y \in X_\T^0$ for every $y \in U$. Similarly, if $-v+x \not\in X_\T^0$ there exists an open neighborhood $U \subset X_T^0$ of $x$ so that
 $-v+y \not\in X_\T^0$ for every $y \in U$.
\end{proof}
Let $M_\T$ denote the monoid generated by $\{ \phi_v\}_{v \in E_\T}$. By $M_\T$ definition there is an action $X_\T^0 \curvearrowleft M_\T$.
We remind the reader that   $\invorb_{M_\T}(x)$ denotes the inverse orbit of $x$  as in Definition \ref{def:inverse_orbit}.
\begin{lem}\label{lem:M_T_invorb_1}
For every $x \in X_\T^0$,
$$\invorb_{M_\T}(x) = \{ -v+x~:~ v \in V_x\}.$$
\end{lem}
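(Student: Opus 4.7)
The plan is to prove both inclusions separately, using the preceding lemma that the intersection graph $\mathcal{G}_x$ is connected for every $x \in X_\T$. Before beginning, note that every element $-v+x$ with $v \in V_x$ lies in $X_\T^0$: the tile of $x$ centered at $v$ becomes centered at $0$ after the translation by $-v$.

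For the inclusion $\invorb_{M_\T}(x) \subseteq \{-v+x : v \in V_x\}$, suppose $y \in X_\T^0$ satisfies $y \cdot m = x$ for some $m \in M_\T$. Writing $m = \phi_{u_1}\phi_{u_2}\cdots\phi_{u_k}$ and unfolding the definition of each $\phi_{u_i}$, each step either translates by $-u_i$ or acts as the identity. Consequently $y \cdot m = -w+y$ where $w$ is a partial sum $\sum_{i \in I} u_i$ of those indices where the shift actually occurred. From $x = -w+y$ it follows that $V_x = V_y - w$; combined with $0 \in V_y$ (because $y \in X_\T^0$) this gives $-w \in V_x$. Setting $v := -w$ yields $y = -v+x$ with $v \in V_x$.

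For the reverse inclusion $\supseteq$, fix $v \in V_x$ and set $y := -v+x \in X_\T^0$; the task is to exhibit an explicit $m \in M_\T$ with $y \cdot m = x$. Here the connectivity of $\mathcal{G}_x$ enters: since $0,v \in V_x$, there is a finite path $v = w_0, w_1, \ldots, w_k = 0$ in $\mathcal{G}_x$, and each successive displacement $u_j := w_j - w_{j-1}$ belongs to $E_\T$ (using the symmetry of $E_\T$). Define $m := \phi_{u_1}\phi_{u_2}\cdots\phi_{u_k}$. A straightforward induction on $j$ shows that $y \cdot \phi_{u_1}\cdots\phi_{u_j} = -w_j + x$: the inductive step succeeds (i.e., the $\phi_{u_j}$ genuinely performs the translation by $-u_j$ rather than defaulting to the identity) precisely because the next point $-w_j + x$ lies in $X_\T^0$, and this is exactly the statement that $w_j \in V_x$, which is guaranteed by the path lying in the vertex set of $\mathcal{G}_x$. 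Taking $j = k$ gives $y \cdot m = -w_k + x = x$.

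The main obstacle is the second inclusion. Its subtlety is that the maps $\phi_u$ are defined only \emph{conditionally}: each one moves the tiling only when the result is already centered. Simply knowing $V_x$ contains both $0$ and $v$ is not enough to walk from one to the other using single admissible displacements while staying in $X_\T^0$ at every intermediate step; it is precisely the connectivity of $\mathcal{G}_x$ (with edges governed by $E_\T$) which ensures the existence of a path of genuinely activated $\phi_{u_j}$'s realizing the total translation by $-v$.
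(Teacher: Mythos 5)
Your proposal is correct and follows essentially the same route as the paper: the forward inclusion by unfolding a word in the generators into a net translation landing in $V_x$, and the reverse inclusion by using connectivity of $\mathcal{G}_x$ to produce a path from $v$ to $0$ whose successive displacements lie in $E_\T$ and whose associated word of $\phi$'s genuinely activates at each step. Your explicit induction verifying that each $\phi_{u_j}$ performs the translation (rather than defaulting to the identity) makes precise a point the paper leaves implicit, but the argument is the same.
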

\begin{proof}
 Fix $x \in X_\T^0$. Because every $m \in M_\T$ is a composition of $\phi_v$'s, it is clear that if $y \in X_\T^0$ and $y \cdot m = x$ for some $m \in M_\T$ then there exists $v \in V_x$ so that $y=-v+x$.
 This show that $\invorb_{M_\T}(x) \subset \{ -v+x~:~ v \in V_x\}$. Conversely, because $\mathcal{G}_x$ is a connected graph, there exists a path $(v_0,v_1),(v_1,v_2),\ldots,(v_{n-1},v_n) \in E_x$  between $v=v_n$ and $v_0=0$. Let $$m:=\phi_{v_{n-1}-v_n}\ldots \phi_{v_{0}-v_1}.$$ Then $y\cdot m =x$.
\end{proof}

\begin{lem}\label{lem:M_T_invorb_2}
There exists $R >0$ so that $X_\T = B_R + X_\T^0$ and for every $x \in X_\T^0$ $\mathbb{R}^d+ x = B_R - \invorb_M(x)$.
\end{lem}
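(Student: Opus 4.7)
The plan is to take $R$ to be any upper bound on $\sup_{t \in T}|t|$ as $T$ ranges over the finite collection $\T$; this is finite because $\T$ is finite and each tile is compact. For the first equality, given $y \in X_\T$ the origin lies in some tile $v + T$ of $y$ (i.e.\ $v \in y_T$ with $0 \in v + T$), so $-v \in T$ forces $v \in B_R$. Moreover $-v + y \in X_\T^0$ because $0 \in (-v + y)_T$. Hence $y = v + (-v + y) \in B_R + X_\T^0$, and the reverse inclusion is immediate since the $\mathbb{R}^d$-action preserves $X_\T$.

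For the second equality, fix $x \in X_\T^0$ and recall from Lemma \ref{lem:M_T_invorb_1} that $\invorb_{M_\T}(x) = \{-v + x : v \in V_x\}$. One inclusion is direct: if $w \in B_R$ and $z = -v + x \in \invorb_{M_\T}(x)$, then $w + z = (w - v) + x$ lies in $\mathbb{R}^d + x$. For the other, given any $t + x \in \mathbb{R}^d + x$, apply the first assertion to the tiling $t + x \in X_\T$: there exist $w \in B_R$ and $z \in X_\T^0$ with $t + x = w + z$, so $z = -w + t + x$. Since $z \in X_\T^0$ and $V_{t + x} = t + V_x$, the origin being a vertex of $z$ forces $w \in t + V_x$; writing $w = t + v$ with $v \in V_x$ yields $z = -v + x$, which lies in $\invorb_{M_\T}(x)$ by Lemma \ref{lem:M_T_invorb_1}.

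I do not foresee any substantial obstacle here. The proof is essentially a bookkeeping argument translating between the continuous $\mathbb{R}^d$-orbit of a centered tiling and its discrete inverse $M_\T$-orbit, with the ball $B_R$ absorbing the continuous degree of freedom coming from the position of the tile that contains the origin. The only delicate point is that in the second part one must apply the first assertion to the translated tiling $t + x$ rather than to $x$ itself, and then identify the selected vertex of $t + x$ with a vertex of $x$ via $V_{t+x} = t + V_x$.
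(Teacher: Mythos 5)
Your proof is correct and takes essentially the same approach as the paper's: choose $R$ with $T \subseteq B_R$ for every $T \in \T$, use the fact that the tiles cover $\mathbb{R}^d$ to find a vertex of the tiling within distance $R$ of the origin (resp.\ of an arbitrary point $t$), and combine with Lemma~\ref{lem:M_T_invorb_1}. The paper condenses the second equality into the set-arithmetic identity $\mathbb{R}^d+x=(B_R-V_x)+x=B_R+\invorb_{M_\T}(x)$ where you verify the two inclusions elementwise, but the content is identical.
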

\begin{proof}
Choose  $R>0$ so that $T \subset  B_R$ for every $T \in \T$. Fix $x \in X_\T$. Choose any $x \in X_\T$. By definition of $X_\T$, $V_x$ and the choice of $R$ it follows that
\begin{equation}\label{eq:B_R_V_X}
\mathbb{R}^d = \bigcup_{ T \in \T}\bigcup_{v \in x_T}\left( v+T\right) \subset \bigcup_{ T \in \T}\bigcup_{v \in x_T} \left(v+B_R\right)=  \bigcup_{x \in V_x} \left(v+ B_R\right) = B_R + V_x.
\end{equation}
Since $B_R$ and $\mathbb{R}^d$ are symmetric sets, we also have $B_R - V_x = \mathbb{R}^d$. In particular, there exists $v \in (B_R \cap V_x)$. Thus $-v+x \in X_\T^0$ for some $v \in B_R$. In particular, $x \in B_R + X_\T^0$. As $x \in X_\T$ was arbitrary,  this proves that $X_\T = B_R + X_\T^0$. Now choose $x \in X_\T^0$.   By Lemma \ref{lem:M_T_invorb_1}, $\invorb_{M_\T}(x) = -V_x +x$.  By the above, we have
$$\mathbb{R}^d+x = (B_R-V_x) +x = B_R + (-V_x +x) = B_R + \invorb_{M_\T}(x).$$
\end{proof}

\begin{lem}\label{lem:M_T_exp}
The action $X_\T^0 \curvearrowleft M_\T$ is expansive.
\end{lem}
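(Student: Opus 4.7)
The plan is to choose $\epsilon > 0$ that forces two centered tilings at distance $\le \epsilon$ to agree locally, and then use the re-centering maps $\phi_v$ to propagate this local agreement globally along the intersection graph.

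First, pick $R > 0$ so each $T \in \T$ lies in $B_R$ (possible since $\T$ is finite). Using the definition of the product Fell topology on $((\mathbb{R}^d)^*)^\T$ together with compactness of $X_\T^0$, choose $\epsilon > 0$ small enough that $d(x',y') \le \epsilon$ for $x', y' \in X_\T^0$ forces $x'_T \cap B_{2R} = y'_T \cap B_{2R}$ for every $T \in \T$. I claim this $\epsilon$ is an expansive constant. Suppose for contradiction that $x, y \in X_\T^0$ are distinct but $d(x \cdot m, y \cdot m) \le \epsilon$ for every $m \in M_\T$. Taking $m$ to be the identity of $M_\T$ gives $d(x,y) \le \epsilon$, so $x$ and $y$ have the same tile at the origin and identical tile centers and types on the ball $B_{2R}$; in particular, every neighbor of $0$ in $\mathcal{G}_x$ is also a neighbor of $0$ in $\mathcal{G}_y$ with the same tile type, since such neighbors lie in $B_{2R}$.

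The argument now is an induction along paths in $\mathcal{G}_x$. For a neighbor $v$ of $0$, the displacement $v$ lies in $E_\T$ (so $\phi_v \in M_\T$) and $v \in V_y$ with matching tile type, so $\phi_v(x) = -v+x$ and $\phi_v(y) = -v+y$; the hypothesis $d(-v+x,-v+y) \le \epsilon$ yields agreement of $x$ and $y$ on $v + B_{2R}$. Continuing along any path $0 = v_0, v_1, \ldots, v_k$ in $\mathcal{G}_x$, the composition $\phi_{v_k - v_{k-1}} \cdots \phi_{v_1 - v_0}$ lies in $M_\T$ and acts as translation by $-v_k$ on both $x$ and $y$, forcing $x$ and $y$ to agree on $v_k + B_{2R}$. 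Since $\mathcal{G}_x$ is connected by the earlier lemma and $B_R + V_x = \mathbb{R}^d$ (as established in the proof of Lemma~\ref{lem:M_T_invorb_2}), this forces $x = y$, giving the required contradiction.

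The main technical point is the inductive step: a path valid in $\mathcal{G}_x$ must remain valid in $\mathcal{G}_y$, so that the chosen monoid element truly translates $y$ by $-v_k$. This works automatically because the inductive hypothesis at stage $j$ guarantees that $x$ and $y$ agree on $v_j + B_{2R}$, a region which contains the next tile center $v_{j+1}$ (adjacent tile centers are within distance $2R$, since each tile sits inside $B_R$ around its center) with matching tile type, so the edge $(v_j, v_{j+1})$ in $\mathcal{G}_x$ also appears in $\mathcal{G}_y$ and the next generator $\phi_{v_{j+1} - v_j}$ acts nontrivially on both tilings.
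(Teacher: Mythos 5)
Your overall strategy is the same as the paper's: exploit connectivity of the intersection graph $\mathcal{G}_x$ and the fact that a composition of the maps $\phi_{v_{j+1}-v_j}$ along a path re-centers both tilings at $v_k$, so that local agreement at the origin propagates to agreement everywhere. However, there is a genuine gap in your very first step. You claim that compactness of $X_\T^0$ and the definition of the Fell topology yield an $\epsilon>0$ such that $d(x',y')\le\epsilon$ forces the \emph{exact} equality $x'_T\cap B_{2R}=y'_T\cap B_{2R}$ for all $T$. Compactness does not give this: closeness in the Fell topology only gives approximate agreement of the center sets on compact regions. For instance, with a single square tile, a centered grid tiling and the centered tiling obtained by sliding one half-plane of columns by a small $\delta$ along a fault line are Fell-close but have different center sets in $B_{2R}$; so pairs violating your exact-agreement condition can accumulate on the diagonal, and no such $\epsilon$ exists in general. (Note that $X_\T^0$ is compact even without finite local complexity, so compactness cannot be the relevant hypothesis.)

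What actually makes the step work is finite local complexity, which is implicit throughout this part of the paper (it is needed already for $E_\T$ to be finite, hence for $M_\T$ to be finitely generated and the $\phi_v$ continuous). Under FLC there are only finitely many possible centered restrictions to $B_{2R}$, the corresponding sets form a finite clopen partition of $X_\T^0$, and distinct clopen pieces are at positive distance, which is exactly the $\epsilon$ you need. The paper's proof packages this by working directly with the finite clopen partition $\alpha_\T$ recording the tile at the origin and the set $V_x\cap E_\T$ of neighboring centers, and then, arguing contrapositively, produces an explicit $m\in M_\T$ separating $x$ and $y$ into different partition elements. So your argument is salvageable, but you must replace ``compactness'' by an explicit appeal to finite local complexity (or to a finite clopen partition as in the paper); as written, the step fails.
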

\begin{proof}
For every $T \in \T$ and $N \subseteq E_\T$ let
$$X_{T,N}^0 := \{ x \in X_\T^0 ~:~  T_0(x) = T \mbox{ and }  V_x \cap E_\T = N\}.$$
Then $\alpha_\T := \{X_{T,N}^0\}_{T \in \T, N \subseteq E_\T}$ is a partition of $X_\T^0$ into clopen sets.
As a slight abuse of notation, let  for  $x \in X_\T^0$ we let  $\alpha_\T(x)$ denote the partition element of $\alpha_\T$ that contains $x$.
To prove $X_\T^0 \curvearrowleft M_\T$ is expansive,
we will show that if $x,y \in X_\T^0$ are such that $\alpha_\T(x\cdot m)=\alpha_T(y \cdot m)$  for every $m \in M_\T$ then $x = y$.

For every $n \in \mathbb{N}$ and $z \in X_\T^0$, let $V_z^{(n)}$ denote the
set of $v \in V_z$ that can be reached in from $0$ in the graph $\mathcal{G}_z$  via a path of length $n$. To be precise, $v \in V_z^{(n)}$ if and only if there exists $v_0,\ldots,v_n \in V_z$ so that  $(v_{k-1}, v_k) \in E_z$ for every $1\le k \le n$.
Because the graph $\mathcal{G}_z$ is connected $v \in V_z$ we have $V_z = \bigcup_{n=0}^\infty V_z^{(n)}$.
Choose $x,y \in X_\T^0$, $x \ne y$.
It follows that either  $V_x \ne V_y$ or $T_0(-v+x)=T_0(-v+y)$ some $v \in V_x$. This implies that there exists a maximal $n \ge 0$ with the property that $V_x^{(n)}= V_y^{(n)}$ and
$\alpha_\T(-v+x)=\alpha_\T(-v+y)$  for all $v \in \bigcup_{k=0}^{n-1}V_x^{(k)}$.
For this $n$, there exists $0=v_0,\ldots,v_n \in V_x \cap V_y$ such that $v_k-v_{k-1} \in E_\T$ for all $1\le k \le n$ 
and $\alpha_T(-v_n + x) \ne \alpha(-v_n +y)$.
Let $m = \phi_{v_1-v_0}\cdot \ldots \cdot \phi_{v_{n}-v_{n-1}}$. Then
$$x \cdot \phi_{v_1-v_0}\cdot \ldots \cdot \phi_{v_{n+1}-v_n} = -v_{n} -x \mbox{ and }
y \cdot \phi_{v_1-v_0}\cdot \ldots \cdot \phi_{v_{n+1}-v_n} = -v_{n} -y .
$$
We see that $\alpha_T(x \cdot m)  \ne \alpha_T(y \cdot m)$.
\end{proof}

\begin{thm}\label{cor:finite_local_complexity_totaly_periodic}
Fix a set  $\T$   of $\mathbb{R}^d$-tiles  that has finite local complexity. If every $\T$-tiling of $\mathbb{R}^d$ is periodic (has co-compact stabilizer) , then there are a finite number of ways to tiles $\mathbb{R}^d$ with $\T$, up to translation.
Equivalently: If $\mathbb{R}^d \curvearrowright X_\T$ is pointwise periodic then it is periodic.
\end{thm}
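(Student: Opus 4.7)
The plan is to apply Theorem \ref{thm:finite_inverse_orbits} to the semigroup action $X_\T^0 \curvearrowleft M_\T$. Because $\T$ has finite local complexity, the set $E_\T$ of admissible displacement vectors is finite, so $M_\T$ is finitely generated by $\{\phi_v : v \in E_\T\}$, and Lemma \ref{lem:M_T_exp} guarantees that this action is expansive. The main work is to translate the hypothesis that $\mathbb{R}^d \curvearrowright X_\T$ is pointwise periodic into the statement that every $M_\T$-inverse orbit in $X_\T^0$ is finite.

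First I would fix $x \in X_\T^0$ and study $\stab(x) \subseteq \mathbb{R}^d$. By Remark \ref{rem:compact_orbit_stab}, compactness of the $\mathbb{R}^d$-orbit of $x$ is equivalent to $\stab(x)$ being a cocompact closed subgroup of $\mathbb{R}^d$. A key observation is that $\stab(x)$ is discrete: any $w \in \stab(x)$ satisfies $V_x + w = V_x$, so if $0 < |w| < \epsilon$ (where $\epsilon$ is the uniform separation constant of $V_x$ noted in the remark following the definition of tiling spaces), pairs $v, v+w \in V_x$ would violate $\epsilon$-separation. A discrete cocompact subgroup of $\mathbb{R}^d$ is a full-rank lattice, so $\mathbb{R}^d/\stab(x)$ is a compact $d$-torus.

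Next I would show that $\invorb_{M_\T}(x)$ is finite. By Lemma \ref{lem:M_T_invorb_1}, $\invorb_{M_\T}(x) = \{-v+x : v \in V_x\}$, and two such elements coincide iff $v_1 - v_2 \in \stab(x)$; hence $|\invorb_{M_\T}(x)| = |\pi(V_x)|$, where $\pi : \mathbb{R}^d \to \mathbb{R}^d/\stab(x)$ is the quotient map. I claim $\pi(V_x)$ is $\epsilon$-separated in the quotient torus: if $\pi(v_1) \ne \pi(v_2)$ were at distance less than $\epsilon$, some $w \in \stab(x)$ would satisfy $|v_1-(v_2-w)| < \epsilon$, but $v_2 - w \in V_x$ by $\stab(x)$-invariance of $V_x$, and $v_2 - w \ne v_1$, contradicting $\epsilon$-separation of $V_x$. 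Since a compact metric space contains only finitely many $\epsilon$-separated points, $\pi(V_x)$, and hence $\invorb_{M_\T}(x)$, is finite.

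With inverse pointwise periodicity of the expansive action $X_\T^0 \curvearrowleft M_\T$ of the finitely generated semigroup $M_\T$ established, Theorem \ref{thm:finite_inverse_orbits} implies $X_\T^0$ is finite. To conclude, Lemma \ref{lem:M_T_invorb_2} gives $X_\T = B_R + X_\T^0$, so every $\mathbb{R}^d$-orbit in $X_\T$ meets the finite set $X_\T^0$ and the number of translation orbits is at most $|X_\T^0|$. I expect the main obstacle to be the rigidity step identifying $\stab(x)$ as a lattice together with the descent of $\epsilon$-separation from $V_x$ to the compact quotient torus; once those geometric facts are in hand, the rest is a direct assembly of the machinery developed earlier in the paper.
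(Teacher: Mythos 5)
Your proposal is correct and follows essentially the same route as the paper: reduce to Theorem \ref{thm:finite_inverse_orbits} via the finitely generated, expansive action $X_\T^0 \curvearrowleft M_\T$, verify finiteness of inverse orbits using Lemma \ref{lem:M_T_invorb_1}, and conclude with Lemma \ref{lem:M_T_invorb_2}. The only difference is in the sub-step showing inverse orbits are finite, where you identify $\stab(x)$ as a full-rank lattice and pass to the quotient torus, whereas the paper argues more directly that the compact set $K$ with $\mathbb{R}^d + x = K + x$ from Remark \ref{rem:compact_orbit_stab} meets the uniformly separated set $V_x$ in only finitely many points; both are valid.
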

\begin{proof}

Suppose $\mathbb{R}^d \curvearrowright X_\T$ is pointwise periodic and $\T$ has finite local complexity.
Fix $x \in X_\T^0$.  By Remark \ref{rem:compact_orbit_stab}, there exists a compact set $K \subset \mathbb{R}^d$ so that $\mathbb{R}^d+x = K +x$.
Using  Lemma \ref{lem:M_T_invorb_1}, it follows that $$\invorb_{M_\T}(x)= \{ -v+x:~ v \in V_x \cap K\}.$$ Because $V_x \cap K$  is finite, it follows that   $\invorb_{M_\T}(x)$ is finite. The semigroup $M_\T$ is finitely generated because $\T$ has finite local complexity.
By  Lemma \ref{lem:M_T_exp}, the action $X_\T^0 \curvearrowleft M_\T$ is expansive.
 Thus, by Theorem \ref{thm:finite_inverse_orbits} it follows that $X_\T^0$ is finite. By Lemma \ref{lem:M_T_invorb_2}, there exists $R>0$ so that  $X_\T = B_R + X_T^0$, so $X_\T$ is a finite union of compact orbits.
\end{proof}

Theorem \ref{cor:finite_local_complexity_totaly_periodic} above brings up the following question:
\begin{question}
To what extent does finite local complexity follow from total periodicity?
\end{question}

For a natural class of  $\mathbb{R}^2$-tiles,   we can answer the above question using Kenyon's work on planar tilings \cite{MR1150605}:

\begin{defn}
Let $\T$ be a set of $\mathbb{R}^2$-tiles.  A \textbf{straight fault line} for  $x \in X_\T$ is a straight line (one dimensional affine subspace) $\ell \subset \mathbb{R}^2$ that is completely covered by the boundaries of tiles in $x$. 
\end{defn}


\begin{prop}[Kenyon \cite{MR1150605}]\label{prop:kenyon_fault_lines}
Let $\T$ be a finite totally-periodic set of $\RR^2$ prototiles that are homeomorphic to balls.  Either $\T$ has finite local complexity or there is a tiling $x \in X_T$ with a  straight fault line.
\end{prop}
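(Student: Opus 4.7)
My plan is to prove the contrapositive: assuming $\T$ does not have finite local complexity, I will produce a tiling in $X_\T$ with a straight fault line.

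First I would fix the limit geometry. Since each prototile in $\T$ is compact, the set $E_\T$ of admissible displacements is bounded in $\RR^2$. If finite local complexity fails, $E_\T$ is infinite; by Bolzano--Weierstrass it has an accumulation point $e^* \in \RR^2$, so there is a sequence of pairwise distinct $e_n \in E_\T$ with $e_n \to e^*$. For each $n$ I pick a witnessing tiling $x_n \in X_\T$ with adjacent tiles of types $T_1^{(n)}, T_2^{(n)} \in \T$ centered at $v_n$ and $v_n + e_n$. Passing to a subsequence (using finiteness of $\T$), I fix the types as $T_1, T_2 \in \T$; translating each $x_n$ by $-v_n$, I arrange $v_n = 0$. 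Compactness of $X_\T$---which holds for any finite set of tiles, independently of finite local complexity---lets me extract a further subsequence with $x_n \to x^*$ in the Fell topology. In $x^*$, the tile $T_1$ sits at the origin and $T_2$ sits at $e^*$, touching along $C := T_1 \cap (T_2 + e^*) \subseteq \partial T_1 \cap \partial(T_2 + e^*)$.

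Second I would extract a straight segment from the contact set $C$ by a perturbation argument. Passing to one more subsequence, $(e_n - e^*)/|e_n - e^*| \to u$ for some unit vector $u$. For large $n$ the tiles $T_1$ and $T_2 + e_n$ have disjoint interiors (they lie together in the valid tiling $x_n$) and still touch (since $e_n \in E_\T$), so the contact $C$ must be locally stable under arbitrarily small translations in direction $u$. Since each prototile is homeomorphic to a ball, $\partial T_1$ and $\partial(T_2 + e^*)$ are Jordan curves, and $C$ is an arc common to both. Local invariance under shifts in direction $u$ forces this arc to contain a straight segment $I \subset C$ of positive length parallel to $u$.

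The main obstacle is upgrading $I$ to an entire line $\ell$ covered by tile boundaries of $x^*$; this is where total periodicity enters. I plan to iterate the perturbation argument along the direction $u$: at each endpoint of $I$ only finitely many tiles of $x^*$ meet, and in $x_n$ those same tiles (or rather their slightly shifted translates) must also accommodate the perturbation $e_n - e^*$, forcing the boundary of the next tile along $\ell$ to be straight in direction $u$ as well. Total periodicity guarantees that the stabilizer of $x^*$ is cocompact in $\RR^2$, so there are only finitely many tile configurations meeting $\ell$ up to translation; together with the Jordan boundary hypothesis, this should ensure the extension propagates through vertices rather than escaping transversally, covering all of $\ell$. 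Formalizing how the perturbation direction $u$ stays coherent at vertices where three or more tiles meet---so that the chain of straight segments along $\ell$ never terminates---is the delicate technical step I expect to be hardest.
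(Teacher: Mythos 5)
The paper does not actually prove this proposition: it is quoted from Kenyon's paper on rigidity of planar tilings, and the accompanying remark states that it ``formally follows from [Theorem~6]'' there, a technical statement the authors deliberately do not reproduce. So you are attempting to reprove a genuinely nontrivial theorem from scratch. Your setup is fine: $E_\T$ is bounded because the tiles are compact, failure of finite local complexity yields an accumulation point $e^*$ of $E_\T$, and compactness of $X_\T$ (which indeed holds without finite local complexity in the topology the paper uses) produces a limit tiling $x^*$ in which $T_1$ and $T_2+e^*$ touch with disjoint interiors.

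The genuine gap is the step where you claim that ``local invariance under shifts in direction $u$'' forces the contact set $C = T_1 \cap (T_2+e^*)$ to contain a straight segment parallel to $u$. The only data you invoke there is that $T_1 \cap (T_2+e_n) \ne \emptyset$ and $\mathrm{int}(T_1) \cap \mathrm{int}(T_2+e_n) = \emptyset$ for a sequence $e_n \to e^*$ approaching from direction $u$. That is not enough: if $T_1$ and $T_2$ are round closed disks, the displacements for which they touch with disjoint interiors form an entire circle, every accumulation direction $u$ is tangential, and the limiting contact set is a single point with no straight segment. Round disks cannot tile the plane, but the example shows your deduction cannot be made at the level of two tiles; it must use that the configurations sit inside complete tilings of $\RR^2$, and that global, Jordan-curve-type analysis is precisely the content of Kenyon's argument. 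Relatedly, you assert without justification that $C$ is an arc, whereas a priori it could be a point, a Cantor set, or a disconnected union of arcs and points. Finally, the extension of a putative segment to a full fault line is, as you acknowledge, only a plan: total periodicity gives each tiling a cocompact stabilizer, but you have not explained how this controls the chain of configurations along the candidate line in $x^*$ or prevents the propagation from terminating at a vertex. As written, the proposal is an outline of the difficulties rather than a proof.
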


\begin{remark}
Proposition \ref{prop:kenyon_fault_lines}  formally follows from \cite[Theorem 6]{MR1150605}, a slightly technical statement with various global assumptions and definitions. For other related statements, see \cite[Theorem 1]{MR1150605} 
 and \cite[Corollary 3]{MR1150605}, where finite local complexity is discussed with respect to all isometries, and then also ``fault circles'' can cause non-finite local complexity.
\end{remark}

 We thus have:

\begin{cor}\label{totally_periodic_planar_tilings}
Let $\T$ be a finite totally-periodic set of $\RR^2$ prototiles that are homeomorphic to balls. Then there are a finite number of ways to tile $\RR^2$ with $\T$-prototiles, up to translation.
\end{cor}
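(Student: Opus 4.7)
The plan is to combine Proposition \ref{prop:kenyon_fault_lines} with Theorem \ref{cor:finite_local_complexity_totaly_periodic}. Kenyon's dichotomy says that either $\T$ has finite local complexity, or some $\T$-tiling $x$ admits a straight fault line $\ell$. In the first case, Theorem \ref{cor:finite_local_complexity_totaly_periodic} immediately yields that there are finitely many $\T$-tilings up to translation, so the real work is to rule out the second case under the total periodicity assumption.

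To derive a contradiction in the fault line case, I would exploit the freedom to shift along the fault line. After rotating the picture I may assume $\ell$ is horizontal. Since $x$ has compact orbit by total periodicity, its stabilizer $\Gamma_x$ is cocompact in $\RR^2$ by Remark \ref{rem:compact_orbit_stab}; because tiles have non-empty interior the stabilizer is discrete, hence $\Gamma_x$ is a rank-$2$ lattice. Choose $w = (w_1, w_2) \in \Gamma_x$ with $w_2 \ne 0$. Since $w$ carries tile boundaries to tile boundaries, each $\ell + nw$ is again a straight fault line, producing an infinite family of horizontal fault lines that partition the plane into strips $S_n$ of height $|w_2|$. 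For each $t \in \RR$ define $y_t \in X_\T$ by keeping $x$ intact outside $S_0$ and replacing the restriction of $x$ to $S_0$ by its translate by $(t,0)$; the bounding fault lines remain covered from both sides, so $y_t$ is a genuine tiling.

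For generic $t$, I would argue $y_t$ is not totally periodic, giving the contradiction. Any translation $v = (v_1, v_2)$ stabilizing $y_t$ with $v_2 \ne 0$ must carry the uniquely shifted strip $S_0$ onto an unshifted strip, forcing $v_1 - t$ to be a horizontal period of the unshifted strip pattern; since the horizontal periods of a fixed tile configuration form a discrete subgroup of $\RR$ (tiles have positive area), this confines the admissible $t$ to a countable set. For $t$ outside this set the stabilizer of $y_t$ has rank at most $1$, so its $\RR^2$-orbit is non-compact, contradicting total periodicity. The main obstacle I foresee is verifying cleanly that the strip-shifting construction really produces a valid element of $X_\T$: this is where the hypothesis that tiles are homeomorphic to balls enters, since one needs the fault line to genuinely decouple the two sides so that reassembling with a horizontal shift still yields a legitimate $\T$-tiling.
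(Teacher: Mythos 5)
Your proposal is correct and follows essentially the same route as the paper: Kenyon's dichotomy reduces matters to the finite-local-complexity case, which Theorem \ref{cor:finite_local_complexity_totaly_periodic} handles, and the fault-line case is ruled out by sliding along the line to produce, for all but countably many shift parameters $t$, a tiling whose stabilizer is not cocompact. The only (inessential) difference is that the paper shears one closed half-plane bounded by $\ell$ rather than a strip between two translated fault lines, which avoids invoking the lattice structure of the stabilizer of $x$.
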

\begin{proof}
If there is a point $x \in X_\T$ with a fault line $\ell$, for every $t \in \mathbb{R}$ we can obtain a new point $x^{(t)} \in X_\T$ by translating the tiles on one of half-planes   defined by $\ell$ by a vector of length $t$ that is parallel to $\ell$. For all but a countable set of $t$'s, $x^{(t)}$ has trivial stabilizer.
Otherwise, by Proposition \ref{prop:kenyon_fault_lines}, $\T$ has finite local complexity, thus by Theorem \ref{cor:finite_local_complexity_totaly_periodic}, $X_\T$ is a finite union of compact orbits.
\end{proof}

\begin{example}\label{example:pointwise_periodic_not_connected}
Consider the set $\T$ consisting only of the tile in Figure \ref{fig:totally_periodic_non_flc}. This tile has two connected components: One of them is a polygon with a ``hole'' (the grey rectangle), and the other has is a translate of the with the same shape as the ``hole''. The tiling set $\T$ is totally periodic, yet $X_\T$ is not a finite union of orbits.
\begin{figure}
\begin{tikzpicture}[scale=0.25]

\draw[gray, thick] (-0.5,3) -- (0,3);
\draw[gray, thick] (0,3) -- (0.5,2.5);
\draw[gray, thick] (0.5,2.5) -- (1,3);
\draw[gray, thick] (-0.5,-1) -- (-0.5,3);
\draw[gray, thick] (1,-1) -- (1,3);
\draw[gray, thick] (-0.5,-1) -- (0,-1);
\draw[gray, thick] (0,-1) -- (0.5,-1.5);
\draw[gray, thick] (0.5,-1.5) -- (1,-1);

\draw [fill=gray] (0.25,0.5) rectangle (0.75,1.5);
\draw [fill=white] (3.25,0.5) rectangle (3.75,1.5);

\end{tikzpicture}
\caption{\label{fig:totally_periodic_non_flc}A totally periodic non-connected $\mathbb{R}^2$-tile}
\end{figure}
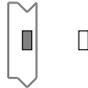
\end{example}

\begin{cor}\label{cor:tiles_not_balls}
There exist a  set $\T$ consisting of one exactly one  $\mathbb{R}^2$-tile  so that $\act{\mathbb{R}^2}{X_\T}$ is not topologically conjugate to $\act{\mathbb{R}^2}{X_\T'}$ for any set $\T'$ of $\mathbb{R}^2$-tiles that are homeomorphic to balls.
\end{cor}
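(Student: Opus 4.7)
The plan is to let $\T = \{T\}$ consist of the single disconnected tile from Example \ref{example:pointwise_periodic_not_connected}. That example supplies two key properties of the action $\act{\mathbb{R}^2}{X_\T}$: it is pointwise periodic (equivalently, $\T$ is totally periodic), yet $X_\T$ is \emph{not} a finite union of $\mathbb{R}^2$-orbits. These are the only properties of $T$ that will be needed; the crucial point is that the tile $T$ fails to be homeomorphic to a ball (it has two connected components).

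First I would record the basic observation that both ``the action is pointwise periodic'' and ``the space is a finite union of compact orbits'' are invariants of topological conjugacy for $\mathbb{R}^2$-actions. Indeed, any $\mathbb{R}^2$-equivariant homeomorphism $\phi : X_\T \to X_{\T'}$ maps each $\mathbb{R}^2$-orbit homeomorphically onto an $\mathbb{R}^2$-orbit, so it preserves compactness of orbits and induces a bijection between orbit spaces.

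Then I would argue by contradiction: suppose $\act{\mathbb{R}^2}{X_\T}$ were topologically conjugate to $\act{\mathbb{R}^2}{X_{\T'}}$ for some finite set $\T'$ of $\mathbb{R}^2$-tiles that are homeomorphic to balls, via a conjugacy $\phi : X_\T \to X_{\T'}$. By the invariance above, $\act{\mathbb{R}^2}{X_{\T'}}$ is also pointwise periodic, so $\T'$ is totally periodic. Since the tiles in $\T'$ are homeomorphic to balls, Corollary \ref{totally_periodic_planar_tilings} applies and gives that $X_{\T'}$ is a finite union of compact $\mathbb{R}^2$-orbits. Transporting back along $\phi^{-1}$, $X_\T$ would also be a finite union of compact $\mathbb{R}^2$-orbits, contradicting the second property of $\T$.

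I do not expect any real obstacle in this argument, as it is essentially a direct combination of Corollary \ref{totally_periodic_planar_tilings} with Example \ref{example:pointwise_periodic_not_connected}. The only point that one might want to verify carefully is the claim made inside Example \ref{example:pointwise_periodic_not_connected} that its $X_\T$ is not a finite union of orbits (intuitively, this should come from the freedom in matching each small rectangular component to one of the available holes, producing uncountably many distinct tilings up to translation), but this is already asserted in the excerpt and is taken as given.
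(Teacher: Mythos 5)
Your proposal is correct and follows essentially the same route as the paper: take the disconnected tile of Example \ref{example:pointwise_periodic_not_connected}, observe that pointwise periodicity and being a finite union of (compact) orbits are invariants of topological conjugacy, and invoke Corollary \ref{totally_periodic_planar_tilings} to rule out any conjugate realization by tiles homeomorphic to balls. No further changes are needed.
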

\begin{proof}
The tile from Example \ref{example:pointwise_periodic_not_connected} above is pointwise periodic but not periodic. Periodicity and pointwise periodicity are clearly invariants of topological conjugacy, so by Corollary \ref{totally_periodic_planar_tilings}
$\act{\mathbb{R}^2}{X_\T}$ is not topologically conjugate to $\act{\mathbb{R}^2}{X_{\T'}}$ for any set $\T'$ of $\mathbb{R}^2$-tiles that are homeomorphic to balls.
\end{proof}

\begin{question}
Does there exist a set $\T$ of connected $\mathbb{R}^d$-tiles so that
$\act{\mathbb{R}^d}{X_\T}$ is not topologically conjugate to an $\mathbb{R}^d$-tiling action with tiles that are topological balls?
\end{question}

Tiling spaces can be considered over  Lie groups  other than $\mathbb{R}^d$ (more generally over homogeneous spaces).
The conclusion of Corollary \ref{totally_periodic_planar_tilings} fails if we replace $\mathbb{R}^2$ for instance by the connected abelian Lie group $G= \RR \times (\RR/\ZZ)$:

\begin{example}
Let $G= \RR \times (\RR/\ZZ)$, and let
$$T = \left\{ (x,t) \in  \RR \times (\RR/\ZZ) ~:~ t \in [0,\frac{1}{2}] ~,~  |t-\frac{1}{4}| \le x \le 1 - |t- \frac{1}{4}|\right\}.$$
Consider the set of $G$-tiles $\T = \{T\}$. A typical tiling of $G$ by $\T$ is illustrated  in Figure \ref{fig:RRRRZZ}. A brief inspection reveals that  $\act{G}{X_\T}$ is pointwise periodic but not periodic.
\end{example}

\begin{figure}
\begin{center}
\begin{tikzpicture}
\draw (-5.2,1) -- (5.2,1);
\draw (-5.2,0) -- (5.2,0);
\draw (-5.2,0.5) -- (5.2,0.5);

\foreach \i in {-5,-4,...,5} {
	\draw (\i,0) -- (\i+0.25,0.25) -- (\i,0.5);
}

\foreach \i in {-5,-4,...,4} {
	\draw (\i+0.3,0.5) -- (\i+0.3+0.25,0.75) -- (\i+0.3,1);
}
\end{tikzpicture}
\caption{The positions of two prototiles can be chosen arbitrarily and their $\RR/\ZZ$-coordinates must differ by $0.5$. The rest are forced.}
\label{fig:RRRRZZ}
\end{center}
\end{figure}
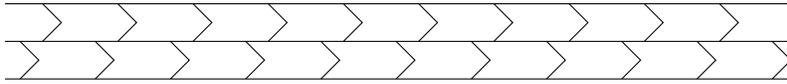

\section{Concluding remarks and further directions}

We conclude the paper with some remarks and further directions.
\subsection{Pointwise periodicity in Linear Dynamics}

``Linear Dynamics'' is a name of the study of iterates of bounded linear operators from the point of view of dynamics. The book \cite{MR2533318} provides an introduction to some basic results and problems in the subject. In this setting, there is a short proof for equivalence of  pointwise periodicity and periodicity.

\begin{prop}
\label{lem:Banach}
A faithful action $G \curvearrowright X$ of a countable discrete group $G$ on a Banach space $X$ by bounded linear operators is free on a residual set.
\end{prop}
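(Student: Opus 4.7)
The plan is to show that the set $X_{\mathrm{free}} = \{x \in X : gx \neq x \text{ for all } g \in G \setminus \{1\}\}$ is residual by writing its complement as a countable union of closed nowhere dense sets. Since $G$ is countable,
\[ X \setminus X_{\mathrm{free}} = \bigcup_{g \in G \setminus \{1\}} \mathrm{Fix}(g), \qquad \mathrm{Fix}(g) := \{x \in X : gx = x\}, \]
so it suffices to prove that each $\mathrm{Fix}(g)$ with $g \neq 1$ is nowhere dense in $X$, and then invoke the Baire category theorem (a Banach space is a complete metric space).

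First I would observe that for each $g \in G$, the set $\mathrm{Fix}(g) = \ker(g - I)$ is a closed linear subspace of $X$, since $g$ acts as a bounded linear operator. For $g \neq 1$, faithfulness of the action gives some $x_0 \in X$ with $gx_0 \neq x_0$, so $\mathrm{Fix}(g)$ is a \emph{proper} closed subspace.

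The key elementary observation is that any proper linear subspace $V$ of a Banach space $X$ has empty interior: if $V$ contained an open ball $B(v,r)$, then since $v \in V$ we would have $B(0,r) \subseteq V$, and scaling would give $V = X$, contradicting properness. Hence each $\mathrm{Fix}(g)$ with $g \neq 1$ is closed with empty interior, i.e.\ nowhere dense.

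There is no real obstacle here; the argument is a direct application of Baire category combined with the fact that proper closed linear subspaces of a Banach space are nowhere dense. The only point worth noting is that the hypothesis of faithfulness is exactly what rules out $\mathrm{Fix}(g) = X$ for $g \neq 1$, and that the countability of $G$ is what allows the union of meager sets to remain meager.
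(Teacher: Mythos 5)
Your proof is correct and follows essentially the same route as the paper: both reduce to showing that for each $g \neq 1$ the fixed-point set $\mathrm{Fix}(g)$ is closed and nowhere dense (the paper phrases this as its complement being dense open, proving density by perturbing a fixed vector $v'$ along the line $v' + \lambda v$ where $gv \neq v$). Your packaging of that perturbation step as the standard fact that a proper closed subspace of a Banach space has empty interior is just a cleaner statement of the same idea.
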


\begin{proof}
It is enough to show that for every $g \in G$, the set $U = \{x \in X \;|\; g \notin \stab(x)\}$ is a dense open set. The fact that this set is open is clear.
Let us show $U$ is dense. Because $\act{G}{X}$ is faithful, let $v \in X$ be such that $gv \neq v$.
Let $v' \in X$ be arbitrary. We need to show that $v'$ is in the closure of $U$.
If $v' \in U$, we are done. Otherwise, $g(v') =v'$ so for every $\lambda \ne 0$ we have
$$g(v'+\lambda v) = v' + \lambda g(v) \ne v' + \lambda v,$$ so  $g \notin \stab(v' + \lambda v)$.
Since $v' = \lim_{\lambda \to 0}(v' +\lambda v)$, this shows $v'$ is in the closure of $U$ and we are done.
\end{proof}

\begin{cor}
\label{thm:Banach}
A pointwise periodic action $G \curvearrowright X$ of a countable discrete group $G$ on a Banach space $X$ by bounded linear operators is periodic.
\end{cor}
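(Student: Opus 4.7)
The plan is to reduce to the faithful setting via the kernel of the action and then invoke Proposition~\ref{lem:Banach} together with the Baire category theorem. First, I would set $N := \bigcap_{x \in X} \stab_G(x)$, the kernel of the action. This is a normal subgroup of $G$, and the action of $G$ on $X$ by bounded linear operators factors through a faithful action of the countable discrete quotient group $G/N$ on $X$ by bounded linear operators.

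Next, I would apply Proposition~\ref{lem:Banach} to the induced faithful action $G/N \curvearrowright X$. This yields a residual set $U \subseteq X$ on which $G/N$ acts freely. Since $X$ is a Banach space, and therefore a Baire space, the residual set $U$ is non-empty; pick any $x_0 \in U$. By pointwise periodicity of the original action, $\stab_G(x_0)$ has finite index in $G$, and consequently $\stab_{G/N}(x_0) = \stab_G(x_0)/N$ has finite index in $G/N$. On the other hand, freeness of the action of $G/N$ at $x_0 \in U$ forces $\stab_{G/N}(x_0) = \{e\}$. Combining these two facts, $\{e\}$ has finite index in $G/N$, so $G/N$ is finite.

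Finally, $N$ is a finite-index normal subgroup of $G$ that is contained in the stabilizer of every point of $X$, which is precisely the definition of $G \curvearrowright X$ being periodic.

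There is no real obstacle in this argument; it is a straightforward packaging of Proposition~\ref{lem:Banach}. The only point worth flagging is that one must use completeness of the Banach space to ensure the residual set $U$ is non-empty via Baire category, and the countability of $G$ (hence of $G/N$) to guarantee that the relevant intersection of dense open sets is indeed residual in the application of Proposition~\ref{lem:Banach}.
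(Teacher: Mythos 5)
Your proof is correct and is exactly the intended deduction: the paper states this as an immediate corollary of Proposition~\ref{lem:Banach} without writing out a proof, and your reduction to the faithful quotient $G/N$, the Baire-category argument giving a free point, and the conclusion that $N$ is a finite-index normal subgroup in every stabilizer is the standard way to fill in that gap. Nothing further is needed.
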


As an application of Corollary~\ref{thm:Banach} we get the following characterization of periodicity:

\begin{cor}
An action  $G \curvearrowright X$  of a countable discrete group $G$ on a compact metric space $X$ is periodic if and only if the induced representation $\act{G}{C(X)}$ on the space $C(X)$ of continuous real  valued functions in $X$ is pointwise periodic.
\end{cor}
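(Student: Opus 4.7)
The forward direction is essentially immediate: if $G \curvearrowright X$ is periodic via a finite-index subgroup $H$ that lies in every stabilizer, then for any $f \in C(X)$ and $h \in H$ one has $(h \cdot f)(x) = f(h^{-1}x) = f(x)$, so $H$ is contained in the stabilizer of every continuous function. This shows $G \curvearrowright C(X)$ is in fact periodic, and in particular pointwise periodic. I would dispatch this direction in a single sentence.

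The interesting direction is the converse, and the plan is to reduce it to Corollary~\ref{thm:Banach} already in hand. Equip $C(X)$ with the supremum norm so that it is a Banach space and the $G$-action $(g \cdot f)(x) = f(g^{-1}x)$ is by bounded (in fact, isometric) linear operators. Assuming this action is pointwise periodic, Corollary~\ref{thm:Banach} produces a finite-index subgroup $H \leq G$ whose elements all fix every $f \in C(X)$.

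The last step is to transfer this fixing property from $C(X)$ back to $X$. For any $h \in H$ and $x \in X$, we have $f(h^{-1}x) = (h \cdot f)(x) = f(x)$ for every $f \in C(X)$. Since $X$ is a compact metric space it is in particular normal Hausdorff, so by Urysohn's lemma continuous real-valued functions separate points; therefore $h^{-1}x = x$. Thus $H$ is contained in the stabilizer of every $x \in X$, which is precisely the definition of periodicity for $G \curvearrowright X$.

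There is no substantial obstacle here: the only subtlety is that Corollary~\ref{thm:Banach} is stated for actions by bounded linear operators on a Banach space, so I would briefly verify that the induced action on $(C(X), \|\cdot\|_\infty)$ is of this form (it is by isometries, since $\|h \cdot f\|_\infty = \sup_x |f(h^{-1}x)| = \|f\|_\infty$ as $x \mapsto h^{-1}x$ is a bijection of $X$). Everything else is a one-line application of a standard separation fact.
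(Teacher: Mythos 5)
Your proof is correct and follows essentially the same route as the paper: the easy forward direction, then Corollary~\ref{thm:Banach} applied to the isometric action on $(C(X),\|\cdot\|_\infty)$, and finally the observation that continuous functions separate points of the compact metric space $X$ (the paper invokes perfect normality where you invoke Urysohn; both amount to the same standard separation fact).
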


\begin{proof}
It is clear that if the action of $G$ on $X$ is  periodic, so is its action on $C(X)$. Suppose  the induced representation $\act{G}{C(X)}$ is pointwise periodic.
Because $C(X)$ is a Banach space, we can  apply Corollary~\ref{thm:Banach} on the representation $\act{G}{C(X)}$ to deduce that $\act{G}{C(X)}$ is periodic. To see that $\act{G}{X}$ is periodic it is enough to check that
every element $g \in G$ that acts trivially on $C(X)$ also acts trivially on $X$.  We show this as follows: Suppose $g^{-1}(x) \ne x$ for some $x \in X$.
Recall that $X$, being a metric space,  is perfectly normal. Thus, there is a function $f \in C(X)$ such that $f(x) = 0$ and $f(g^{-1} x) = 1$. It follows that $gf(x) \neq f(x)$, so $g$ does not fix $f$.
\end{proof}

\subsection{Expansive actions of topological groups and weakly expansive actions of locally compact groups.}
Actions of $\mathbb{R}^d$ and other Lie groups on tiling spaces provide some motivation for developing the framework described in Section \ref{sec:horoballs} for non-discrete groups.
Most the definitions and results from Section \ref{sec:horoballs} seem to be readily extendable to actions of certain locally compact Polish groups, equipped with  a compatible  metric, subject to certain conditions (the metric should be left-invariant, proper, and approximately geodesic).
However, in this setting formulating a useful  notion of expansiveness is slightly more subtle.
 For an action $\act{G}{X}$ where $G$ is  a connected abelian group such as $G=\mathbb{R}$, the usual definition of expansiveness for discrete groups implies that the action is trivial.
Bowen and Walters \cite{MR0341451} addressed this issue and introduced  a more delicate and meaningful notion of expansiveness for $\mathbb{R}$-actions.
Kwapisz \cite{MR2851674} formulated and studied  related conditions for $\mathbb{R}^d$-actions. Kwapisz's notions of ``weak  expansivity'',  ``phase stability'' (together with a more standard condition of ``locally free action'') together form an ``abstract tiling action''.  
In \cite{MR3163024}, Barge and Olimb  used the term ``transversely expansive'' to capture this weak form of expansiveness for tiling actions of $\mathbb{R}^d$.
It might be worthwhile to explore  non-abelian extensions of this theory.

In contrast to the abelian case, we remark that for some non-abelian connected Lie groups the ``naive'' notion of expansiveness can be meaningful:
\begin{example}
Let $X = \RR^*$ denote the set of closed subsets of $R$, equipped with the Fell topology. Let $G$ denote the group of affine transformations of $R$, of the form $x \mapsto ax+b$ with $(a,b) \in \mathbb{R}$, $a \ne 0$. $G$ naturally acts on $X$.
This action is expansive (in the usual sense of expansive actions of discrete groups).
\end{example}

\bibliographystyle{abbrv}
\bibliography{expansive}
\end{document}